\documentclass{amsart}

\usepackage{amsmath,amssymb,amsfonts,mathrsfs}
\usepackage{amsxtra}
\usepackage{enumitem}
\usepackage{mathtools}
\usepackage{stmaryrd}
\usepackage[all]{xy}
\usepackage[colorlinks=true]{hyperref}
\usepackage[dvipdfmx]{graphicx}
\usepackage{xcolor}
\usepackage{bmpsize}

\theoremstyle{plain}
\newtheorem{theorem}{Theorem}[section]
\newtheorem{lemma}[theorem]{Lemma}
\newtheorem{proposition}[theorem]{Proposition}
\newtheorem{corollary}[theorem]{Corollary}
\theoremstyle{definition}
\newtheorem{definition}{Definition}[section]
\newtheorem{remark}{Remark}[section]


\newcommand{\IC}{\mathbb C}

\newcommand{\IN}{\mathbb N}
\newcommand{\IP}{\mathbb P}
\newcommand{\IQ}{\mathbb Q}

\newcommand{\IZ}{\mathbb Z}

\newcommand{\bx}{\boldsymbol{x}}
\newcommand{\by}{\boldsymbol{y}}
\newcommand{\bz}{\boldsymbol{z}}
\newcommand{\bw}{\boldsymbol{w}}

\newcommand{\cB}{\mathcal{B}}
\newcommand{\cC}{\mathcal{C}}
\newcommand{\cD}{\mathcal{D}}
\newcommand{\cE}{\mathcal{E}}

\newcommand{\cI}{\mathcal{I}}

\newcommand{\cL}{\mathcal{L}}
\newcommand{\cM}{\mathcal{M}}

\newcommand{\cO}{\mathcal{O}}

\newcommand{\cU}{\mathcal{U}}
\newcommand{\cV}{\mathcal{V}}

\newcommand{\cZ}{\mathcal{Z}}

\newcommand{\kS}{\mathfrak{S}}

\newcommand{\greg}{\mathscr{G}}

\newcommand{\marc}{\mathscr{M}}

\newcommand{\HH}{{\rm H}}

\renewcommand{\Tilde}{\widetilde}
\renewcommand{\Hat}{\widehat}

\newcommand{\ie}{{\it i.e.\ }}

\newcommand{\Jac}[1]{{\rm Jac}(#1)}
\newcommand{\NS}[1]{{\rm NS}(#1)}
\newcommand{\Pic}[2]{{\pic}^{#1}(#2)}
\newcommand{\Sym}[2]{{\sym}^{#1}(#2)}
\newcommand{\Symo}[2]{{\sym}_{0}^{#1}(#2)}
\newcommand{\Hilb}[2]{{\hilb}^{#1}(#2)}
\newcommand{\Kum}[2]{{\kum}^{#1}(#2)}

\newcommand{\veps}{\varepsilon}

\DeclareMathOperator{\abel}{AJ}
\newcommand{\bl}{\beta}
\DeclareMathOperator{\rest}{res}

\DeclareMathOperator{\coker}{coker}
\DeclareMathOperator{\Aut}{Aut}
\DeclareMathOperator{\Span}{Span}
\DeclareMathOperator{\rank}{rk}
\DeclareMathOperator{\Grass}{Grass}

\DeclareMathOperator{\Discr}{Discr}
\DeclareMathOperator{\Res}{Res}
\DeclareMathOperator{\Supp}{Supp}

\DeclareMathOperator{\Bl}{Bl}
\DeclareMathOperator{\red}{red}
\DeclareMathOperator{\id}{id}
\DeclareMathOperator{\pr}{pr}
\DeclareMathOperator{\hilb}{Hilb}
\DeclareMathOperator{\kum}{Kum}
\DeclareMathOperator{\sym}{Sym}
\DeclareMathOperator{\pic}{Pic}
\DeclareMathOperator{\OGsix}{OG6}
\DeclareMathOperator{\OGten}{OG10}

\newcommand{\KK}{{\rm K3}}
\newcommand{\ogsix}{{\OGsix}}
\newcommand{\ogten}{{\OGten}}

\newcommand{\Ext}{\mathchoice{{\textstyle\bigwedge}}
	{{\bigwedge}}
	{{\textstyle\wedge}}
	{{\scriptstyle\wedge}}}

\newcommand{\Macaulay}{{\bf Macaulay2~}}

\newcommand{\Magma}{{\bf Magma~}}

\renewcommand{\AA}{\mathbb A}

\newcommand{\CC}{\mathbb C}

\newcommand{\PP}{\mathbb P}

\newcommand{\latt}[1]{{\langle{#1}\rangle}}
\newcommand{\inj}{\hookrightarrow}
\newcommand{\To}{\longrightarrow}

\setcounter{MaxMatrixCols}{11}

\newcommand{\SerreR}[1]{\mathrm{R}_{#1}}
\newcommand{\SerreS}[1]{\mathrm{S}_{#1}}

\subjclass{14C05; 14C20; 14E20; 14J10; 14J17; 14J28; 14J42}

\keywords{holomorphic symplectic manifold, abelian surface, Jacobian of curve, Hilbert scheme of points, Kummer variety, covering}

\begin{document}

\title[On the linear system of cubics of a genus two curve]{On the generalised Kummer fourfold of the Jacobian of a genus two curve}

\author{Samuel Boissi\`ere}

\address{Samuel Boissi\`ere,
	Laboratoire de Math\'ematiques et Applications,
	UMR 7348 du CNRS,
	B\^atiment H3,
	Boulevard Marie et Pierre Curie,
	Site du Futuroscope,
	TSA 61125,
	86073 Poitiers Cedex 9,
	France}
\email{samuel.boissiere@univ-poitiers.fr}
\urladdr{http://www-math.sp2mi.univ-poitiers.fr/{\char'176}sboissie/}

\author{Marc Nieper--Wi{\ss}kirchen}

\address{Marc Nieper-Wi{\ss}kirchen,
	Lehrstuhl f\"ur Algebra und Zahlentheorie,
	Universit\"ats-stra{\ss}e 14, 86159 Augsburg,
	Germany}
\email{marc.nieper-wisskirchen@math.uni-augsburg.de}
\urladdr{http://www.math.uni-augsburg.de/alg/}

\author{Gregory Sankaran}

\address{Gregory Sankaran,
University of Bath,
Bath BA2 7AY, England}
\email{G.K.Sankaran@bath.ac.uk}
\urladdr{https://people.bath.ac.uk/masgks/gks.html}

\begin{abstract}
	We construct a birational model of the generalised Kummer
        fourfold of the Jacobian of a genus two curve, based on a
        geometric interpretation of the addition law on this Jacobian,
        obtained by the properties of the linear system of cubics on
        that curve. We show that our model has mild singularities and
        that it admits a finite ramified covering to the
        four-dimensional projective space.
\end{abstract}

\maketitle

\setcounter{tocdepth}{1}
\tableofcontents

\section{Introduction} \label{s:introduction}

An irreducible holomorphic symplectic manifold, abbreviated to IHS
manifold, is a simply-connected compact complex manifold $X$ such that
$\HH^{2,0}(X)$ is generated by an everywhere nondegenerate
$2$-form. If $X$ is an IHS manifold then $\HH^2(X, \IZ)$ naturally
carries a nondegenerate integral quadratic form $q_X$ of signature
$(3, b_2(X)-3)$ (see~\cite{Beauville}), the
\emph{Beauville--Bogomolov--Fujiki quadratic form}.  We denote by
$\latt{ -, -}_X$ the associated bilinear form. In all the known
deformation types, the lattice $\left(\HH^2(X, \IZ), q_X\right)$ is
even, but not unimodular, except when $X$ is a K3 surface.

All the IHS manifolds that we consider will furthermore be projective.
A \emph{polarisation} of $X$ is a primitive ample class $L$ in the
N\'eron--Severi group $\NS X $ of $X$. The \emph{degree} of the
polarisation is the positive integer $d\coloneqq q_X(L)$ and its
\emph{divisibility} is the integer $\gamma$ such that $\latt{ \HH^2(X,
  \IZ), L}_X = \gamma \IZ$.

Gritsentko, Hulek and Sankaran~\cite{GHS_moduliIHS} constructed coarse
moduli spaces of polarised pairs $(X, L)$ of a given deformation type:
these moduli spaces are quasi-projective varieties. Our initial
motivation in this paper is to search for concrete geometric
descriptions of the generic elements in some of these moduli spaces
$\cM$. In practice, in most cases, such a description of a generic object can be
used to construct a dominant rational map $\IP^N \dashrightarrow \cM$
for some integer~$N$, so that $\cM$ would be unirational.

In general it is hard to decide whether a given moduli space $\cM$ is
unirational or not. The general philosophy is that these moduli spaces
may be unirational for low values of the numerical invariants but will
be of general type when the numerical invariants are high. For example
this is the case for K3 surfaces (see~\cite{GHS_moduliK3}) and for any
putative class of IHS manifolds whose moduli space is of large
dimension (see~\cite{Ma}), and analogous statements hold for
moduli of curves and of abelian varieties.
Instead of unirationality
one could ask for related properties such as being rational (stronger)
or uniruled, stably rational or rationally connected (weaker). In
Appendix~\ref{app:moduli_spaces} we summarise the currently known
results about the birational types of moduli spaces of polarised IHS
manifolds (see also~\cite{Debarre_Milan}).

Except for K3 surfaces, each of the known moduli spaces is named after
a codimension one family. For instance, the four-dimensional moduli
spaces $\cM_{\kum^2}^{d, \gamma}$ that feature in this paper
parametrise polarised IHS manifolds of Kummer type, \ie deformation
equivalent to the second generalised Kummer variety of an abelian
surface, of degree $d$ and divisibility $\gamma$.
Most of the unirationality results for moduli of polarised IHS
manifolds concern the deformation class of Hilbert type. For the other
known types, the question is relatively unexplored, apart from the
recent results of Barros, Beri, Flapan and Williams~\cite{BBFW} for
the generalised Kummer and OG6 cases. In this paper, we focus on the
deformation type of the second generalised Kummer variety of a
polarised abelian surface.  In order to attack the unirationality
question in this deformation class, our first objective, which we
achieve in the present paper, is to construct and to study a
birational model of a generalised Kummer fourfold using only rational
tools.  Our second objective, which is still work in progress, will
then be to understand how this construction may deform. Benedetti,
Manivel and Tanturri~\cite{BMT} worked on a similar question, from a
different point of view, using Coble hypersurfaces to get models of
generalised Kummer fourfolds as flag varieties, but their construction
does not deform. Very recently another construction of a similar
nature has been given by Agostini, Beri, F.~Giovenzana and R\'{\i}os
Ortiz in~\cite{ABGRO}. 

By the general philosophy on the moduli spaces, we guess that to lower
the discrete invariants it is wise to lower the polarisation.  We
therefore consider principally polarised abelian surfaces, and study
the codimension~$1$ family of generalised Kummer fourfolds over
Jacobians of a genus two curve. It might also be interesting to study
the other principally polarised case, products of elliptic curves,
which will give a codimension 2 family.  

Our original intuition is to fix a genus~$2$ curve $C$ and look at
projective coordinates on $\Jac C$ in a model where addition is well
described. Such models are used in cryptography, for instance by
Flynn~\cite{Flynn_law} and Leitenberger~\cite{Leitenberger}, whose
works on the addition law inspired the present paper. Our main results
are:

\begin{theorem}[Corollary~\ref{cor:main1}]
Let $C$ be a smooth genus two curve. The linear
system of cubics embeds $C$ in $(\IP^4)^\vee$ and the dual variety
$C^\ast\subset \IP^4$ of $C$ is a degree~$14$ irreducible
hypersurface. The second generalised Kummer variety $\Kum 2 {\Jac C}$
of the Jacobian of $C$ is birational to a degree $15$ covering
of~$\IP^4$ branched along $C^\ast$.
\end{theorem}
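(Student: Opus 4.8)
The plan is to separate the statement into its two halves: the projective-geometric facts about the cubic embedding and its dual variety, and the birational description of $\Kum{2}{\Jac C}$.

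For the first half, since $\deg(3K_C) = 6 \geq 2g+1 = 5$, the linear system $|3K_C|$ is very ample and embeds $C$ as a nondegenerate sextic curve in $\PP(H^0(3K_C)^\vee) = (\PP^4)^\vee$; hyperplanes there correspond to the members of $|3K_C|$, so the dual projective space $\PP^4$ is identified with the linear system of cubics and $C^\ast \subset \PP^4$ is the locus of singular cubics. It is irreducible because the conormal variety of the smooth irreducible curve $C$ is, and it is a hypersurface because $C$ is nondegenerate. Its degree equals the class of $C$: projecting $C$ from a general codimension-two centre to $\PP^1$ yields a degree $6$ map whose branch points, by Riemann--Hurwitz, number $2\cdot 6 + 2g - 2 = 14$, and these are exactly the tangent hyperplanes through the centre, whence $\deg C^\ast = 14$.

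For the second half, the heart of the argument is a geometric reading of the group law. I would use the birational map $\Sym{2}{C} \dashrightarrow \Jac C$ (contracting the $g^1_2$) to represent a generic class in $\Jac C$ by an unordered pair $\{p,q\}$, normalising the Abel--Jacobi map with respect to $K_C$ so that $\{p,q\} \mapsto [p+q-K_C]$. A generic point of $\Kum{2}{\Jac C}$ is a triple $\{a_1,a_2,a_3\}$ of distinct nonzero classes with $a_1+a_2+a_3 = 0$; writing $a_i = [p_i+q_i-K_C]$, the vanishing of the sum is equivalent to $D \coloneqq \sum_{i=1}^{3}(p_i+q_i) \sim 3K_C$, i.e.\ $D$ is a cubic. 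Sending $\{a_1,a_2,a_3\}$ to $D$ defines a rational map $\phi\colon \Kum{2}{\Jac C} \dashrightarrow \PP^4$. Conversely a generic cubic is a reduced divisor $D = r_1 + \cdots + r_6$ of six distinct points, and a point of $\phi^{-1}(D)$ is precisely a partition of these points into three unordered pairs; by the computation above every such partition automatically sums to zero, hence lies in $\Kum{2}{\Jac C}$. There are $\tfrac{1}{3!}\binom{6}{2}\binom{4}{2} = 15$ such partitions, and for generic $D$ the three resulting classes are distinct and nonzero, so $\phi$ is dominant of degree $15$. To locate the branch locus, observe that the fibre of $\phi$ drops below $15$ exactly where two of the points $r_i$ come together, which happens precisely when $D$ is a singular member of $|3K_C|$, that is when $D \in C^\ast$. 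A small loop around a general point of $C^\ast$ exchanges the two colliding points, inducing a transposition in $S_6$ whose action on the $15$ partitions fixes the three containing the colliding pair and swaps the remaining twelve in pairs; this confirms that $\phi$ is genuinely ramified along $C^\ast$, and since $C^\ast$ is the full discriminant of the family, it is the entire branch divisor. Together with the irreducibility of $\Kum{2}{\Jac C}$, this exhibits it as a connected degree $15$ covering of $\PP^4$ branched along $C^\ast$.

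The main obstacle will be the rigorous control of $\phi$: one must describe its indeterminacy locus and the degenerate strata (pairs $\{p_i,q_i\}$ lying in the $g^1_2$, collisions $a_i = a_j$ or $a_i = 0$, and non-reduced members of $\Kum{2}{\Jac C}$), check that these are proper closed subsets not affecting the generic fibre, and verify that after a suitable birational modification $\phi$ becomes an honest finite covering. Equally delicate is making the identification of the branch divisor with $C^\ast$ precise rather than merely set-theoretic, which requires a careful local study of the summation morphism $\Hilb{3}{\Jac C} \to \Jac C$ near its degenerations and of the collision of points on $C$.
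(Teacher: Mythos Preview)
Your argument is correct in outline and follows the same geometric idea as the paper: represent points of $\Jac C$ by unordered pairs via $\Sym 2 C$, so that triples summing to zero correspond to sextuples lying on a cubic, and count the $15$ partitions of six points into three pairs. Your degree and branch computations (Riemann--Hurwitz for $\deg C^\ast=14$, and the transposition monodromy giving $3$ fixed and $6$ swapped pairs over a generic point of $C^\ast$) agree exactly with the paper's Proposition~\ref{prop:branch} and Lemma~\ref{lem:ram_phi}.

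The difference is that the paper resolves your acknowledged ``main obstacle'' by \emph{naming} the birational model explicitly rather than appealing to an unspecified modification or Stein factorisation. Since the Abel--Jacobi map $\abel_2\colon\Sym 2 C\to\Jac C$ is precisely the blowup of the origin, the paper takes $\greg_C\coloneqq\Symo 3{\Sym 2 C}$, which sits inside $\Sym 3{\Sym 2 C}$ and maps to $\Sym 6 C$ by forgetting the partition. This model is automatically normal (Proposition~\ref{prop:GA_normal}), the map $\psi\colon\greg_C\to|\cL|\cong\IP^4$ is genuinely finite of degree~$15$ (Proposition~\ref{prop:multiple_4plane}), and the branch locus is identified scheme-theoretically with $C^\ast$ via the restriction of $\Delta_6\subset\Sym 6 C$ (Proposition~\ref{prop:branch}). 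The birational map $\gamma_C\colon\greg_C\dashrightarrow\Kum 2{\Jac C}$ is then just the Hilbert--Chow inverse composed with $\Sym 3{\abel_2}$. In short, your ``suitable birational modification'' is $\Symo 3{\Sym 2 C}$; once you write that down, all of your flagged difficulties about indeterminacy, degenerations, and the precise branch divisor dissolve into the clean statements above.
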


We denote by $\greg_C$ the degree $15$ covering of $\IP^4$ branched
along $C^\ast$ mentioned in the above statement, whose definition is
given in Definition~\ref{def:greg} and~\S\ref{ss:def_G}, and by:
\[
\gamma_C\colon \greg_C\dashrightarrow \Kum 2 {\Jac C}
\]
the birational map in question, whose definition is given in
Formula~\eqref{eq:gamma}.

\begin{proposition}[Proposition~\ref{prop:multiple_4plane}]
  The variety $\greg_C$ is normal and  Gorenstein, and
  with quotient singularities.
\end{proposition}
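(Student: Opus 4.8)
The plan is to realise $\greg_C$ as a finite quotient of a (nearly smooth) complete intersection and to reduce the three assertions to local statements about that quotient. Using the description of $\greg_C$ from \S\ref{ss:def_G}, I would identify it with $V/S_3$, where
\[
V=\{(D_1,D_2,D_3)\in(\Sym{2}{C})^3 : \cO(D_1+D_2+D_3)\cong\cO(3K)\}
\]
is the fibre over $\cO(3K)$ of the summation map $(\Sym{2}{C})^3\to\Pic{6}{C}$ and $S_3$ permutes the factors; the induced map $V/S_3\to\IP^4=|3K|$ is then the degree $15$ covering, its ramification being exactly the tangency (dual) locus $C^\ast$. Recall that $\Sym{2}{C}=\Bl_{[K]}(\Jac C)$ is the blow-up of the Jacobian at the point $[K]$, with exceptional curve $E=|K|$. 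In the blow-up chart $u(t,s)=(t,ts)$ on each factor the summation map is submersive except along the small diagonal of $E^3$, so $V$ is a codimension-two complete intersection that is smooth away from a curve; this is the structure on which everything rests.

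For normality I would check Serre's criterion on $V$ directly: being a local complete intersection it satisfies $S_2$, and the differential computation above shows its singular locus is a curve, so $V$ is regular in codimension one and hence normal; normality then descends to the finite quotient $\greg_C=V/S_3$. For the Gorenstein property I would argue that, as a local complete intersection, $V$ is Gorenstein, so $\omega_V$ is a line bundle even at the singular points. Since $S_3$ acts through $\mathrm{SL}_4$—on each transverse slice its representation is the sum $U\oplus U$ of two copies of the standard representation, of determinant one—it preserves a local generator of $\omega_V$, whence $\omega_{\greg_C}$ is invertible and $\greg_C$ is Gorenstein. Globally this generator is the descent under $\gamma_C$ (Formula~\eqref{eq:gamma}) of the holomorphic symplectic form of $\Kum{2}{\Jac C}$.

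For the quotient-singularity statement I would stratify the non-free locus of the $S_3$-action and compute a local analytic model on each stratum. Off the diagonals $\greg_C$ is smooth. Along the big diagonal $\{D_1=D_2\}$ the stabiliser $\ZZ/2$ acts as $-1$ on a two-dimensional symplectic slice, giving the transverse $A_1$-singularity $\IC^2/\{\pm1\}$. At the finitely many triple-diagonal points lying over the \emph{nonzero} three-torsion of $\Jac C$ the full group $S_3$ acts on a smooth four-dimensional slice as $U\oplus U$, yielding the quotient singularity $\IC^4/S_3$. Each of these is a Gorenstein quotient singularity, consistent with the previous paragraph.

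The hard part will be the remaining stratum, lying over the exceptional point $[K]$ of the blow-down $\Sym{2}{C}\to\Jac C$, where the zero three-torsion point has been blown up into the curve $E$: here the three pairs degenerate into the $g^1_2$, the summation map drops to rank one, and the local computation in the blow-up coordinates shows that the slice transverse to $E$ is cut out by a rank-four quadric. One must then show that, after passing to the $S_3$-quotient, the singularity is still a quotient singularity rather than something worse. I expect this to require either an explicit identification of the local model as $\IC^4/\Gamma$ for a suitable finite $\Gamma\subset\mathrm{SL}_4$, or a comparison with the Kummer model $(\Jac C)^3_0/S_3$ via $\gamma_C$, showing that the birational modification over $[K]$ is crepant and introduces no non-quotient singularity. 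Reconciling the finite-covering description of $\greg_C$ with the quotient-singularity description along this exceptional stratum is the technical heart of the proof.
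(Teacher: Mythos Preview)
Your identification $\greg_C=V/\kS_3$ with $V=\Tilde A^3_0\subset(\Sym 2 C)^3$ and the normality argument via Serre's criterion on $V$ are exactly what the paper does (for general~$A$) in Proposition~\ref{prop:GA_normal}. The differences, and the resolution of your acknowledged gap, lie in the other two claims.

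For quotient singularities you propose to stratify the $\kS_3$-fixed locus and leave the stratum inside $E^3$ unresolved. The paper's argument is shorter and dissolves your ``hard part'' in one line: having computed (as you also do) that along this stratum $V$ is locally the quadric cone $\bx_1\bw_1+\bx_2\bw_2=0$ in~$\IA^5$, it simply observes that this is a transversal $A_1$ (nodal) singularity, hence already a quotient singularity. A finite quotient of a variety with quotient singularities again has quotient singularities, so no explicit local model of $V/\kS_3$ along that stratum is ever needed.

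For Gorensteinness your Watanabe-type argument (the $\kS_3$-action on a transverse slice is $U\oplus U\subset\mathrm{SL}_4$) is correct at smooth fixed points of $V$, but does not literally apply along the singular fixed curve, where there is no $4$-dimensional tangent space; you would still have to argue, e.g.\ by continuity of the character on the line bundle~$\omega_V$, that $\kS_3$ acts trivially on $\omega_V$ there as well. The paper sidesteps this by working one level up: $\Sym 3{\Sym 2 C}$ is the quotient of a \emph{smooth} sixfold by $\kS_3$ acting without quasi-reflections, hence Gorenstein; the finite flat morphism $\varphi\colon\Sym 3{\Sym 2 C}\to\Sym 6 C$ onto a smooth base therefore has invertible relative dualising sheaf, and since Gorenstein morphisms are stable under base change, the fibre $\greg_C=\varphi^{-1}(|\cL|)$ over the smooth $|\cL|\cong\IP^4$ is Gorenstein. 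This avoids touching the singularities of $V$ at all.
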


In particular, $\greg_C$ is Cohen-Macaulay.

\begin{proposition}[Propositions~\ref{prop:birational_F1} and \ref{prop:birational_F2}]
The birational map $\gamma_C$ contracts one divisor to the
noncurvilinear point of $\Kum 2 {\Jac C}$ supported at the origin of~$\Jac C$, and a second divisor to the
Kummer surface $\Kum 1 {\Jac C}$.
\end{proposition}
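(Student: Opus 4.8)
The plan is to read off both contractions directly from the partition description of $\greg_C$ and the Abel--Jacobi description of $\gamma_C$ recalled in Definition~\ref{def:greg} and Formula~\eqref{eq:gamma}. A general point of $\greg_C$ is an effective divisor $D\in|3K_C|$ together with a partition $D = D_1 + D_2 + D_3$ into three degree-two sub-divisors, and $\gamma_C$ sends it to the point $\{a_1,a_2,a_3\}\in\Kum 2{\Jac C}$ with $a_i = [D_i - K_C]$, so that $a_1+a_2+a_3 = 0$. Since $\gamma_C$ is birational between fourfolds, a prime divisor of $\greg_C$ is contracted precisely where $\gamma_C$ acquires positive-dimensional fibres, and by Riemann--Roch on a genus-two curve this forces one of the degree-two systems $|D_i|$ to move in a pencil, that is $D_i\sim K_C$ (equivalently $a_i=0$): indeed a degree-two divisor has $h^0=2$ only for the hyperelliptic $g^1_2$. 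This isolates two prime divisors, $F_1$, the closure of the locus where a single part is canonical, and $F_2 = \sym^3|K_C|$, the locus where all three parts are canonical. First I would check that $F_1$ and $F_2$ are distinct irreducible divisors (they lie over different three-dimensional subvarieties of $\IP^4 = |3K_C|$, the divisors admitting a canonical sub-pair and the image of $\sym^3|K_C|\cong\IP^3$ respectively) and that $\gamma_C$ is an isomorphism in codimension one away from them.

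On $F_1$ one has $a_1 = 0$ and hence $a_3 = -a_2$, so the image point is $\{0,a_2,-a_2\}$; this depends only on the unordered pair $\{a_2,-a_2\}$ and not on the chosen representative $D_1\in|K_C|\cong\IP^1$. Thus $\gamma_C$ maps $F_1$ with generically $\IP^1$-fibres onto the two-dimensional locus $\{\{0,a,-a\} : a\in\Jac C\}$. The plan is to identify this image with the Kummer surface $\Kum 1{\Jac C}$ through the natural map that adjoins the origin to a length-two subscheme summing to zero, checking injectivity and the induced scheme structure over the two-torsion points $a=-a$ separately. This realises the contraction of $F_1$ to $\Kum 1{\Jac C}$.

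For the remaining divisor, every configuration in $F_2 = \sym^3|K_C|$ has $a_1=a_2=a_3=0$, so its image is a length-three subscheme of $\Kum 2{\Jac C}$ supported entirely at the origin of $\Jac C$. Since $F_2$ is irreducible, it suffices to compute the image of a general point and show it is the unique non-curvilinear length-three subscheme $n_0 = \Spec\cO_{\Jac C,0}/\mathfrak{m}_0^2$; the image of $F_2$ is then the single point $n_0$. This local computation is the main obstacle. I would carry it out in analytic coordinates at $0\in\Jac C$ using the explicit addition law underlying \eqref{eq:gamma}: write down the ideal of the subscheme that $\gamma_C$ assigns to three general canonical divisors $D_1,D_2,D_3\in|K_C|$ and show that the associated tangent data span the full tangent plane $T_0\Jac C$, so that the ideal is exactly $\mathfrak{m}_0^2$ and not a curvilinear jet contained in a smooth curve. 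The delicate point is precisely to verify that this scheme structure is the fat point $\mathfrak{m}_0^2$ rather than a generically expected curvilinear one; once established, it forces $F_2$ to contract to the single non-curvilinear point, and together with the previous paragraph this proves the proposition.
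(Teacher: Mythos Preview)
Your identification of the two divisors is correct, though your labels are swapped relative to the paper: your $F_1$ (one canonical part, the other two forced to be a $\tau$-pair) is the paper's $F_2$, and your $F_2=\sym^3|K_C|$ is the paper's $F_1=\Sym 3 {E_C}$. The Riemann--Roch step locates exactly where $\Sym 3 {\bl_C}$ has positive-dimensional fibres; since the remaining factor $h_C^{-1}$ of $\gamma_C$ is the inverse of a morphism and hence contracts no divisor, this exhausts the exceptional locus of $\gamma_C$---but you should say so rather than speak of ``fibres of $\gamma_C$'' for what is only a rational map. For the Kummer-surface contraction your direct $0$-cycle argument is valid and in fact more streamlined than the paper's: on the dense open where $a_2\neq 0$ is not $2$-torsion the cycle $0+a_2+(-a_2)$ is reduced, so $h_C^{-1}$ is defined and lands in $\Kum 1 {\Jac C}\hookrightarrow\Kum 2 {\Jac C}$; combined with the codimension-$2$ bound on the indeterminacy this suffices. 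The paper instead repeats an explicit coordinate calculation there.

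The non-curvilinear contraction is, as you say, the crux, and here your formulation has a real gap. You propose to ``write down the ideal of the subscheme that $\gamma_C$ assigns to three general canonical divisors'', but $\gamma_C$ is defined \emph{nowhere} on your $F_2$: the morphism $\Sym 3 {\bl_C}$ collapses all of it to the single $0$-cycle $3\cdot O_{\Jac C}$, over which the Hilbert--Chow fibre is the entire two-dimensional Brian\c{c}on variety $B^3_{O}$. No ideal is directly ``assigned''; you must extend $\gamma_C$ to the generic point of the divisor by computing in a full neighbourhood inside $\greg_C$, not on the divisor alone. Your spanning-tangent-directions heuristic is the correct geometric picture, but turning it into a proof is exactly the content of the paper's argument: one expresses the Haiman affine coordinates on the curvilinear charts $\cU_{(1,1,1)}$ and $\cU_{(3)}$ of $\Hilb 3 {\IC^2}$ as rational functions on $\Tilde A^3_0$ via Cramer's rule, and checks that the ``curvature'' coordinate acquires an honest pole along the divisor. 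The rational image therefore escapes both curvilinear charts and is forced to be the unique remaining point $Z_\infty$. Without this (or an equivalent limit computation) the claim that the image is the fat point $\mathfrak m_0^2$ rather than some curvilinear scheme is unjustified.
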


\begin{proposition}[Proposition~\ref{prop:marc}]
The Galois closure of the covering $\greg_C\to\IP^4$ is a local
complete intersection scheme.
\end{proposition}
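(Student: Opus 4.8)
The plan is to make the Galois closure completely explicit and then to exhibit it, locally, as the zero locus of a regular sequence in a smooth ambient variety. First I would identify the monodromy group $G\subseteq S_{15}$ of the covering $\greg_C\to\IP^4$. Since the $15$ sheets are naturally indexed by the nonzero $2$-torsion points of $\Jac C$ (equivalently, by the $\binom{6}{2}=15$ pairs of Weierstrass points of $C$), the monodromy must act through the symplectic group $\mathrm{Sp}_4(\IF_2)\cong S_6$ on $\Jac C[2]\setminus\{0\}$, and I expect $G\cong S_6$, with the degree-$15$ cover corresponding to the stabiliser $H\cong S_4\times S_2$ of a pair. Granting this, the Galois closure $\Hat\greg_C\to\IP^4$ is the degree-$720$ cover with group $S_6$, with $\greg_C=\Hat\greg_C/H$; in particular $\dim\Hat\greg_C=4$.

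Second, I would construct $\Hat\greg_C$ as a concrete subscheme, starting from the equations defining $\greg_C$ over $\IP^4$ (Definition~\ref{def:greg}). Abstractly $\Hat\greg_C$ is the normalisation of the component of the iterated fibre product $\greg_C\times_{\IP^4}\cdots\times_{\IP^4}\greg_C$ lying off the diagonals, but the effective presentation is to adjoin the sheets one at a time, at each stage dividing out the linear factor corresponding to the root already extracted. This realises $\Hat\greg_C$ as an explicit subscheme of $\IP^4\times\prod_i\PP_i$, where each $\PP_i$ carries the $i$-th root, cut out by a homogeneous system whose cardinality equals, by design, the expected codimension $\dim(\text{ambient})-4$.

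Third, and here the real work lies, I would verify that this presentation is in fact a complete intersection, i.e. that the defining equations form a regular sequence at every point, with no excess or embedded components. Over $\IP^4\setminus C^\ast$ this is immediate since the cover is \'etale there, so the difficulty is entirely concentrated over $C^\ast$ and its deeper singular strata, where several sheets collide and the naive fibre product threatens to acquire excess-dimensional components. The $S_6$-symmetry reduces the local analysis over $C^\ast$ to a finite list of orbit types of degenerations, and for each I would compute the local model of $\Hat\greg_C$ explicitly and check that the codimension is preserved. The fact that $\greg_C$ already has only quotient singularities and is Gorenstein (Proposition~\ref{prop:multiple_4plane}) is precisely what keeps the branching tame and makes these local models tractable.

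The main obstacle is thus the excess-intersection analysis over the deepest strata of $C^\ast$: one must rule out components of dimension larger than $4$ and confirm that the defining ideal is unmixed of the correct codimension. I expect to settle this by a direct computation with the explicit equations in \Macaulay or \Magma, verifying that the ideal has the expected codimension, that its singular locus has high codimension, and that the scheme is Cohen--Macaulay and generically reduced. Together these force the number of minimal local generators to equal the codimension at every point, which is exactly the statement that $\Hat\greg_C$ is a local complete intersection.
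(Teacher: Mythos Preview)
Your proposal misses the key structural insight and, as written, does not constitute a proof.

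First, your description of the $15$ sheets is wrong. Over a generic cubic $D\in|\cL|$, the fibre of $\psi\colon\greg_C\to\IP^4$ consists of the $15$ ways to partition the six intersection points $D\cap C$ into three unordered pairs (see Remark~\ref{rem:explicit_fibre}); these are \emph{not} the nonzero $2$-torsion points of $\Jac C$, and the six points are not the Weierstrass points. The coincidence $\binom{6}{2}=15=\tfrac{6!}{2^3\cdot 3!}$ is just the duad/syntheme phenomenon underlying the outer automorphism of $\kS_6$. Your group $G\cong\kS_6$ is correct, but it acts by permuting the six intersection points, and the relevant index-$48$ subgroup $H$ is the stabiliser of a perfect matching (Definition~\ref{def:groupH}), not of a duad.

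Second, and more importantly, the paper's proof does not proceed by iterated fibre products at all. The decisive observation is that the Chow quotient $\Phi_6\colon C^6\to\Sym 6 C$ already factors as
\[
C^6\xrightarrow{\Hat\varphi}\Sym 3{\Sym 2 C}\xrightarrow{\varphi}\Sym 6 C,
\]
so that the Galois closure of $\psi$ is simply $\marc_C\coloneqq(\abel_6\circ\Phi_6)^{-1}(\cO_C)\subset C^6$ (Lemma~\ref{lem:factorise_chow}, Diagram~\eqref{diag:galois_closures}). Once this is recognised, the LCI property is immediate and conceptual: $\marc_C$ is the locus in $C^6$ where the restriction map $\rest_{\Phi_6(p)}\colon\IC^5\to\IC^6$ has rank $\leq 4$, but by Proposition~\ref{prop:rank4} the rank never drops below $4$. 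Hence at every point some $4\times 4$ minor is invertible, and the two $5\times 5$ minors containing that block cut out $\marc_C$ locally in the smooth $6$-fold $C^6$. Two equations, codimension two --- local complete intersection.

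Your strategy of building the closure by successively adjoining roots inside a product of projective spaces, then checking regularity stratum by stratum over $C^\ast$ (ultimately by computer), is not a proof as it stands: you have no equations to feed to \Macaulay, and the ``excess-intersection analysis over the deepest strata'' is exactly the hard part you defer. The paper bypasses all of this by identifying the closure with a natural subscheme of $C^6$ where the determinantal structure does the work for free.
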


In this paper, the term ``variety'' denotes an integral separated
noetherian scheme of finite type over the field of complex
numbers. The term ``curve'' means an irreducible projective variety of
dimension~$1$.

In~\S\ref{s:general_construction} we construct the
variety~$\greg_C$ under more general assumptions, starting from any
abelian surface $A$. We study the contraction to the generalised
Kummmer fourfold $\Kum 2 A$ in this general setup. Then
in~\S\ref{s:genustwo} we specialise to the case where~$A$ is the
Jacobian of a genus two curve and we study the properties of the
linear system of cubics on the curve. We apply this geometry
in~\S\ref{s:covering} to realise $\greg_C$ as a finite cover of
$\IP^4$. Finally in~\S\ref{s:branch} we study the branch locus of this
cover. Several Appendices contain some backgrounds, alternative or
complementary views of some results proven in the main text,
as such as some helpful computer algebra scripts.

The authors warmly thank Daniele Agostini, Pietro Beri, Enrica Floris, Christian Lehn,
Emanuele Macr\`i, Gianluca Pacienza, Mattieu Romagny, Alessandra Sarti
and Calla Tschanz for helpful discussions. We also thank the
organisers and the participants of the conferences \textit{Around
  Symmetries of K3 Surfaces} at BIRS in 2023 and \textit{Kummers in
  Krakow} in 2024. The first author has been partially funded by the
ANR/DFG project ANR-23-CE40-0026 ``Positivity on K-trivial varieties".

\section{A rational contraction to the generalised Kummer fourfold}
\label{s:general_construction}

We consider a polarised abelian surface $(A, c_1(H))$, with origin
$O_A\in A$, where $H\in \Pic {2t} A$ is an ample divisor of degree
$H^2=2t$ with $t\geq 1$, whose first Chern class $c_1(H)$ is primitive
in $\HH^2(A, \IZ)$.

\subsection{The second generalised Kummer variety of an abelian surface} \label{ss:intro_genKm}

For any integer $m\geq 0$, we denote the Hilbert scheme of
$0$-dimensional subschemes of $A$ of length $m$ by $\Hilb {m} A$,
the Chow quotient by $\Sym {m} A$, and the Hilbert--Chow morphism
by:
\[
h_A\colon \Hilb {m} A \to \Sym {m} A.
\]
The addition law on $A$ defines the following morphisms:

\begin{equation}
	\label{eq:diag_addition_quotient}
	\xymatrix{
		A^{m} \ar[r]^{\alpha_A}\ar[d]_{\pi_A}& A \\
		\Sym {m} A \ar[ru]_{\bar\alpha_A}
	}
\end{equation}
We restrict to the case $m = n + 1$ for an $n \geq 0$ and denote by
 $\Symo {n + 1} A\coloneqq \alpha_A^{-1}(\{O_A\})$ the fibre
over the origin of the addition map $\alpha_A$ and by $\Kum n A$
the \emph{$n$-th generalised Kummer variety of $A$}, defined as the
fibre over the origin of $\alpha_A\circ h_A$:
\[
\Kum n A \coloneqq (\bar\alpha_A\circ h_A)^{-1}(O_A)\subset \Hilb {n+1} A.
\]
The restriction $h_A^\circ$ of the Hilbert--Chow morphism $h_A$ to the
generalised Kummer variety is still birational. It is a resolution of
the singularities of the Chow quotient $\Symo {n+1} A$.  It is well
known \cite{Beauville} that the variety $\Symo {n+1} A$ has
symplectic singularities and that $\Kum n A$ is an irreducible
holomorphic symplectic manifold of dimension $2n$.
The variety $\Kum 1 A$ is the classical Kummer surface associated to~$A$, \ie the minimal resolution of the quotient $A/{\pm 1}$.

In this paper, we are mostly interested in the second
generalised Kummer variety $\Kum 2 A$.  Its second integral
cohomology group decomposes as follows. There exists a natural injection
  $\HH^2(A, \IZ)\inj \HH^2(\Kum 2 A, \IZ)$ and we have:
\[
\HH^2(\Kum 2 A, \IZ)  = \HH^2(A, \IZ)\oplus \IZ \delta,
\]
where $\delta$ is half the class of the exceptional divisor of the
Hilbert--Chow morphism intersected with $\Kum 2 A$. This decomposition
is orthogonal with respect to the lattice structure on $\HH^2(\Kum 2 A, \IZ)$
given by the Beauville--Bogomolov--Fujiki (BBF) form, and the isometry
class of the lattice is computed in~\cite{Rapagnetta}:
\[
\HH^2(\Kum 2 A, \IZ) \cong U^{\oplus 3}\oplus \latt{-6},
\]
where $U$ is the hyperbolic plane.

By the decomposition above, we have a splitting of the N\'eron--Severi
lattice:
\[
\NS {{\Kum 2 A}} = \NS A \oplus \IZ\delta.
\]

We denote by $h\in \NS{\Kum 2 A }$ the image of $c_1(H)$, which is a
big and nef divisor. By a result of Debarre and
Macr\`{\i}~\cite[Corollary~4.11]{DM} the classes $ah-b\delta$, with
$a,\,b>0$, are ample when $b/a<1/3$. If $(A, H)$ is \emph{generic},
meaning that $\NS A = \IZ c_1(H)$, we have $\NS {\Kum 2 A} = \IZ h
\oplus \IZ\delta$.  Furthermore, A.~Mori~\cite{Mori}, has shown that
if $H$ is a principal polarisation, \ie $t= 1$, then the ample cone is
precisely the interior of the cone generated by the classes $h$ and
$2h-\delta$. The smallest possible polarisation degree with respect to
the Beauville--Bogomolov--Fujiki quadratic form is thus given by the
smallest integer $d$ such that $\ell\coloneqq ah-b\delta$ is an ample
class with $q_{\Kum 2 A}(\ell) = d = 2e$.

The smallest integer $e$ such that $e = a^2 -3b^2$ with $a, b\in\IN$
and $b/a<1/2$ is $e =6$, obtained for $(a, b) = (3, 1)$, so the
minimal polarisation is $\ell = 3 h - \delta$, of degree $d = 12$. It
is easy to check that $\latt{ \NS {\Kum 2 A}, \ell} = 6\IZ$. But since
the embedding of $\NS {\Kum 2 A}$ in $\HH^2(\Kum 2 A, \IZ)$ sends
the class $h$ to an element of the unimodular lattice $U^{\oplus 3}$,
there exists $u\in U^{\oplus 3}$ such that $\latt{ u, h} = 1$ and this
implies that $\latt{\HH^2(\Kum 2 A, \IZ), \ell} = 3\IZ$.
The divisibility is thus $\gamma = 3$ and $(\Kum 2 A , \ell)\in
\cM_{\kum^2}^{12, 3}$. This space does not appear in~\cite{BBFW} and
nothing is known about its birational geometry.

\subsection{The birational model} \label{ss:objectofinterest}

Consider the blowup of the origin of $A$:
\[
\bl_A\colon\Tilde A\coloneqq \Bl_{O_A} A \To A,
\]
with exceptional divisor $E_A\coloneqq \bl_A^{-1}(O_A)$.  First,
using the summation maps defined in
Diagram~\eqref{eq:diag_addition_quotient} we put:
\[
A^3_0\coloneqq \alpha_A^{-1} (O_A),
\]
and we denote by $\pi_A^\circ$ the restriction of the Chow quotient:
\[
\xymatrix{
	A^3_0 \ar@{^(->}[r]\ar[d]_{\pi_A^\circ}& A^3\ar[d]_{\pi_A} \ar[r]^{\alpha_A} & A \\
	\Symo 3 A \ar@{^(->}[r]& \Sym 3 A  \ar[ru]_{\bar\alpha_A}
}
\]
We do similarly starting with $\Tilde A$; we define $\Tilde A^3_0$
as the fibre over the origin of the morphism:
\[
\Tilde A^3 \xrightarrow{\bl_A^{\times 3}} A^3 \xrightarrow{\alpha_A} A,
\]
and we finally define the main object of interest in this paper:

\begin{definition}\label{def:greg}
We denote by $\greg_A$ the scheme-theoretic fibre over the origin of
the morphism:
\[
\Sym 3 {\Tilde A} \xrightarrow{\Sym 3 {\bl_A}} \Sym 3 A
\xrightarrow{\bar\alpha_A} A,
\]
that is: $\greg_A\coloneqq \Symo 3 {\Tilde A}\coloneqq
(\bar\alpha_A\circ \Sym 3 {\bl_A})^{-1}(O_A)$.
\end{definition}

The morphism $\Sym 3 {\bl_A}$ is clearly birational and its
restriction to $\greg_A$ is still birational since it is an
isomorphism above the open subset of triples of nonzero points on $A$
whose sum is zero. We are interested in the birational map:
\begin{align}\label{eq:gamma}
\gamma_A\coloneqq h_A^{-1}\circ \left.\Sym 3 {\bl_A}\right|_{\greg_A}\colon \greg_A \dashrightarrow \Kum 2 A.
\end{align}

All the relevant maps are shown in the diagram~\eqref{diagram_total} below:
\begin{equation}\label{diagram_total}
\xymatrix{
	\Tilde A^3\ar[d]_{\pi_{\Tilde A}} & \Tilde A^3_0\ar[d]_{\pi_{\Tilde A}^\circ}\ar[rrr]^{\left.\bl_A^{\times 3}\right|_{\Tilde A^3_0}}\ar@{_(->}[l] &&& A^3_0\ar[d]_{\pi_{A}^\circ} \ar@{^(->}[r]& A^3\ar[d]_{\pi_{A}}\\
	\Sym 3 {\Tilde A} & \greg_A\ar@{_(->}[l]\ar[rrr]^{\left.\Sym 3 {\bl_A}\right|_{\greg_A}}\ar@{-->}[drrr]_{\gamma_A} &&& \Symo 3 A \ar@{^(->}[r] & \Sym 3 A\\
	&&&& \Kum 2 A\ar[u]^{h_A^\circ}\ar@{^(->}[r] & \Hilb 3 A \ar[u]^{h_A} \\
}
\end{equation}

\begin{proposition}\label{prop:GA_normal}
The scheme $\greg_A$ is reduced. It is a normal projective variety of
dimension $4$, Cohen--Macaulay and $\IQ$-factorial with
quotient singularities, and it is birational to $\Kum 2 A$.
\end{proposition}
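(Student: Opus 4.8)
The plan is to prove each assertion about $\greg_A = \Symo 3 {\Tilde A}$ by transporting structure from the smooth variety $\Tilde A^3_0$ through the quotient map $\pi_{\Tilde A}^\circ$, which presents $\greg_A$ as a quotient by the symmetric group $\kS_3$. Let me sketch how I would organize this.

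\medskip

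First I would establish that $\Tilde A^3_0$ is smooth of dimension $4$ and that $\greg_A = \Tilde A^3_0 / \kS_3$. The morphism $\alpha_A \circ \bl_A^{\times 3}\colon \Tilde A^3 \to A$ is a composition of the proper birational $\bl_A^{\times 3}$ with the smooth surjective addition map, so its fibre $\Tilde A^3_0$ over $O_A$ is cut out by two equations (the fibre of a map to the surface $A$) inside the smooth fivefold... no: $\Tilde A^3$ has dimension $6$, and the addition map to the surface $A$ is a smooth submersion away from the exceptional loci. The key point is that $\alpha_A\colon A^3 \to A$ is a smooth fibration (it is a group homomorphism up to translation), so $A^3_0$ is a smooth abelian fourfold-torsor; pulling back along the blowup $\bl_A^{\times 3}$, which is an isomorphism over the locus where no coordinate is $O_A$, I would check that $\Tilde A^3_0$ is smooth by a local computation at points where one or more coordinates lie on the exceptional divisor $E_A$. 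Because $\bl_A$ is a blowup of a smooth point on a smooth surface and the addition map is a submersion, the fibre product defining $\Tilde A^3_0$ remains smooth; this is the step requiring care but is essentially transversality. The symmetric group $\kS_3$ acts on $\Tilde A^3_0$ by permuting the three factors, and by construction $\greg_A$ is the (scheme-theoretic) quotient, since $\Sym 3 {\Tilde A} = \Tilde A^3 / \kS_3$ and taking the fibre over $O_A$ commutes with this quotient.

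\medskip

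Once $\greg_A = \Tilde A^3_0 / \kS_3$ with $\Tilde A^3_0$ smooth, the remaining properties follow from standard facts about finite group quotients of smooth varieties. A geometric quotient of a smooth variety by a finite group is normal (hence reduced, as it is generically reduced being birational to a variety), has finite quotient singularities by definition, is $\IQ$-factorial, and—by the Hochster--Roberts theorem or directly because quotient singularities are rational and a quotient of a Cohen--Macaulay variety by a finite group acting freely in codimension suitably is again Cohen--Macaulay—is Cohen--Macaulay. I would cite that quotient singularities are automatically $\IQ$-factorial and Cohen--Macaulay (indeed rational) since they are analytically isomorphic to $\mathbb{C}^4/G$ for finite $G \leq \mathrm{GL}_4$. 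Dimension $4$ is immediate from $\dim \Tilde A^3_0 = 4$, and projectivity follows because $\greg_A$ is a closed subscheme of the projective variety $\Sym 3 {\Tilde A}$. Irreducibility (so that $\greg_A$ is genuinely a variety) follows from irreducibility of $\Tilde A^3_0$, which in turn follows from that of $A^3_0$, an abelian-variety torsor.

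\medskip

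Finally, the birationality to $\Kum 2 A$ is already observed in the text immediately after Definition~\ref{def:greg}: the restriction of $\Sym 3 {\bl_A}$ to $\greg_A$ is an isomorphism over the open locus of triples of distinct nonzero points summing to $O_A$, and composing with $(h_A^\circ)^{-1}$ gives the birational map $\gamma_A$ of Formula~\eqref{eq:gamma}. I expect the main obstacle to be the smoothness of $\Tilde A^3_0$ at the points lying over the diagonal and exceptional loci simultaneously, where both the blowup and the fibre condition interact; everything else is a formal consequence of the quotient presentation. I would verify this smoothness by an explicit local computation in analytic or étale coordinates near such a point, reducing to the model in which $\bl_A$ is the standard blowup of the origin in $\IC^2$ and $\alpha_A$ is linear.
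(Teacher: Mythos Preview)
Your overall strategy---present $\greg_A$ as the $\kS_3$-quotient of $\Tilde A^3_0$ and read off all the singularity-theoretic properties from that---is exactly the paper's approach. However, your central claim that $\Tilde A^3_0$ is \emph{smooth} is false, and this is not a minor wrinkle: the local computation you propose would reveal a genuine singularity. Near the point of $\Tilde A^3$ lying over $(O_A,O_A,O_A)$, using blowup coordinates $(\bx_i,\bz_i)$ with $\by_i=\bx_i\bz_i$ on each factor, the two equations $\sum\bx_i=0$ and $\sum\bx_i\bz_i=0$ reduce (after eliminating $\bx_3$ and setting $\bw_i=\bz_i-\bz_3$) to the single equation $\bx_1\bw_1+\bx_2\bw_2=0$ in $\mathbb A^5$. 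This is a quadric cone, singular along the line $\bx_1=\bx_2=\bw_1=\bw_2=0$, i.e.\ along the small diagonal of $E_A^3$. So $\Tilde A^3_0$ is \emph{not} smooth.

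The paper's proof runs your argument with this correction in place. The local analysis shows that $\Tilde A^3_0$ is normal with hypersurface (indeed transversal nodal) singularities, hence Gorenstein; such singularities are themselves quotient singularities, so the $\kS_3$-quotient $\greg_A$ still has quotient singularities, and the Hochster--Roberts theorem together with~\cite[Proposition~5.15]{KM} gives Cohen--Macaulay and $\IQ$-factorial. Your irreducibility argument also needs a small patch: one cannot simply pull back irreducibility along the birational map $\Tilde A^3_0\to A^3_0$ without knowing something about the fibre; the paper instead deduces connectedness and then invokes normality plus Zariski's Main Theorem. The birationality with $\Kum 2 A$ is, as you say, already observed before the proposition.
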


\begin{proof}
Let $\mathfrak a \to A$ be the natural cover where $\mathfrak a$ is
the (abelian) Lie algebra of~$A$.  By choosing linear coordinates
$(\bx, \by)$ on $\mathfrak a$, we get an identification $\mathfrak a
\to \IC^2$
and thus a cover $\IC^2 \to A$ of groups (in the category
of complex manifolds).  The linear coordinates $(\bx, \by)$ induce
local coordinates on $A$ around each point, up to a choice of an
element in the kernel of $\IC^2 \to A$. Near $O_A$, we always make
this choice such that the coordinates of $O_A$ become $(0, 0)$.
Around a point $a\in A$, with
$a\neq O_A$, the coordinates $(\bx, \by)$ are also local coordinates of $\Tilde A$ at the
point $\tilde a\coloneqq\bl_A^{-1}(a)$. Around the point $a= O_A$, local coordinates
of $\Tilde A$ near the exceptional divisor~$E_A=\bl_A^{-1}(O_A)$
are $\bx, \bz$ with relation $\by = \bx \bz$ (or with the roles of
$\bx$ and $\by$ exchanged).  The map $\bl_A$ is then locally given by
$\bl_A(\bx, \bz) = (\bx, \bx \bz)$.

With this convention, around each point $a\in A$ 
the local coordinates are such that the addition law is the
standard one. That is, around a point $(a_1, a_2, a_3)\in A^3$, the
local coordinates $(\bx_1, \by_1), (\bx_2, \by_2), (\bx_3, \by_3)$
are such that the subvariety $A^3_0$ is locally given by the relations $\bx_1 +
\bx_2 + \bx_3 = 0$ and $\by_1 + \by_2 + \by_3 = 0$.
This allows us to analyse the singularities of $\Tilde A^3_0$ by
computing local equations of $\Tilde A^3_0$ using the morphism
$\bl_A^{\times 3}$.  Again the local equations of $\Tilde A^3_0$ are
$\bx_1 + \bx_2 + \bx_3 = 0$ and $\by_1 + \by_2 + \by_3 = 0$, but
$\by_i$ is in degree~$1$ if $a_i\neq O_A$ and in degree~$2$ if
$a_i=O_A$. Therefore, unless $a_1=a_2=a_3=O_A$, the ideal defining
$\Tilde A^3_0$ in the local ring is generated by two elements with
independent linear parts, so $\Tilde A^3_0$ is smooth. If
$a_1=a_2=a_3$ then $\Tilde A^3_0$ is locally a linear section (by
$\bx_1+\bx_2+\bx_3=0$) through a rank~$3$ quadric cone
$\bx_1\bz_1+\bx_2\bz_2+\bx_3\bz_3=0$, which is again a quadric cone in~$\AA^5$, of rank~$2$. Indeed, a local equation is $\bx_1 \bw_1 + \bx_2
\bw_2 = 0$, in local coordinates $\bx_1,\,\bx_2,\bw_1\coloneqq
\bz_1-\bz_3,\, \bw_2\coloneqq \bz_2-\bz_3,\, \bz_3$.

It follows that $\Tilde A^3_0$ is normal and has hypersurface
singularities, so it is Gorenstein~\cite[Corollary 21.19]{Eisenbud},
and it is connected and irreducible by Zariski's Main Theorem.

Since $\greg_A$ is the quotient of $\Tilde A^3_0$ by the action of
the symmetric group $\kS_3$ acting by permutation of the factors, we
deduce easily that $\greg_A$ is reduced, normal, connected and
irreducible.  Moreover, $\greg_A$ is Cohen-Macaulay by
the Hochster--Roberts theorem~\cite[Main Theorem and Remark~2.3]{HR}
and it is $\IQ$-Gorenstein (see the argument in the proof
of~\cite[Lemma 5.16]{KM}. We can even be more specific here: since
$\Tilde A^3_0$ has transversal nodal singularities, it has in
particular quotient singularities, so $\greg_A$ too. It follows that
$\greg_A$ is $\IQ$-factorial with rational singularities
(see~\cite[Proposition~5.15]{KM}).
\end{proof}

\begin{remark}\label{rem:non_cartier}
The variety $\Tilde A^3_0$ is not locally factorial. Consider for
instance the divisor $F_1=\Sym 3 {E_A}$ and its pre-image
$\Tilde{F}_1\coloneqq (\pi^\circ_{\Tilde A})^{-1} (F_1)$.  In the
local chart used in the proof above, the divisor $E_A$ has equation
$\bx= 0$, so $\Tilde F_1$ has equations $\bx_1 = \bx_2 = 0$ inside
$\Tilde{A}^3_0$: it is not a Cartier divisor. Computations with
Macaulay2~\cite{Macaulay2} indicate that $\Symo 3 {\tilde{A}}$ and
$\greg_A$ are not local complete intersection schemes. They indicate
also that $\greg_A$ is Gorenstein: we will prove it in
Proposition~\ref{prop:multiple_4plane} under the assumption that $A$
is the Jacobian of a genus two curve. The script is given in
Remark~\ref{macaulay:lci}.
\end{remark}

\subsection{The divisorial contraction to the Chow quotient}\label{ss:divisorial_chow}

To fully describe the geometric relation between~$\greg_A$ and~$\Symo
3 A$, we exhibit two meaningful divisors $F_1, F_2$ on $\greg_A$ that
parametrise special configurations of triples of points on $\Tilde
A$.

Recall that $E_A\subset \Tilde A$ is the exceptional divisor of the
blowup $\bl_A\colon \Tilde A \to A$. As in
Remark~\ref{rem:non_cartier} we define the prime
divisor
\begin{align}\label{def:F1}
  F_1\coloneqq \Sym 3 {E_A}\subset \greg_A.
\end{align}
Since $E_A\cong\IP^1$, the divisor~$F_1$ is isomorphic to~$\IP^3$.

Denote by $\tau\colon A\to A$, $a\mapsto -a$ the sign involution and
by $\bar a\in A/\tau$ the class of $a\in A$. There is an embedding of
$A/\tau$ in $\Symo 3 A$ given by $\bar a\mapsto a + (-a) + O_A$. The
surface $A/\tau$ contains in particular the point~$3 \,O_A$.  The
sign involution~$\tau$ on~$A$ lifts to $\Tilde A$ as an involution
denoted $\tilde\tau$, making the blowup morphism $\bl_A\colon \Tilde
A\to A$ equivariant, that is $\bl_A\circ \tilde\tau = \tau\circ
\bl_A$, and leaving the exceptional divisor $E_A$ pointwise fixed. We
then define the second prime divisor $F_2\subset \greg_A$ as the image
of the morphism
\[
\Tilde A/{\tilde\tau} \times E_A \to \greg_A, \quad (\tilde a, e)\mapsto \tilde a + \tilde\tau(\tilde a) + e,
\]
that is:
\begin{align}\label{def:F2}
	F_2\coloneqq\left\{\tilde a + \tilde\tau(\tilde a) + e\mid (\tilde a, e)\in \Tilde A/{\tilde\tau} \times E_A\right\}\subset \greg_A.
\end{align}

\begin{proposition}
The birational morphism $\Sym 3 {\bl_A}\colon \greg_A \to \Symo 3 A$
contracts the divisor $F_1$ to the point $3\, O_A$, and it contracts
the divisor $F_2$ to the surface $A/{\tau}$. It is $1:1$ outside of
these two divisors.
\end{proposition}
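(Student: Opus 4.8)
I want to show that the birational morphism $\Sym 3 {\bl_A}\colon \greg_A \to \Symo 3 A$ contracts $F_1$ to the point $3\,O_A$, contracts $F_2$ to the surface $A/\tau$, and is bijective elsewhere. The natural approach is to work directly with the description of $\greg_A$ as the quotient $\Tilde A^3_0/\kS_3$ and to track where the map sends each configuration type, using the blowup $\bl_A$ and the definitions of $F_1$ and $F_2$. The key point is that $\Sym 3 {\bl_A}$ fits into the commutative diagram~\eqref{diagram_total}, so I can lift computations to $\Tilde A^3_0$ and then descend to the $\kS_3$-quotient.

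**Step one: the image of $F_1$.** By definition $F_1 = \Sym 3 {E_A}$, the image in $\greg_A$ of triples of points all lying on the exceptional divisor $E_A$. Since $\bl_A$ collapses $E_A$ to the single point $O_A$, any such triple maps under $\bl_A^{\times 3}$ to $(O_A, O_A, O_A)$, and hence $\Sym 3 {\bl_A}$ sends every point of $F_1$ to the class of $O_A + O_A + O_A = 3\,O_A$ in $\Symo 3 A$. So $F_1$ is contracted to $3\,O_A$, and since $F_1 \cong \IP^3$ while $3\,O_A$ is a point, this is genuinely a contraction of a divisor.

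**Step two: the image of $F_2$.** By the definition in~\eqref{def:F2}, a point of $F_2$ has the form $\tilde a + \tilde\tau(\tilde a) + e$ with $\tilde a \in \Tilde A/\tilde\tau$ and $e \in E_A$. Applying $\bl_A$ to each summand, using $\bl_A\circ\tilde\tau = \tau\circ\bl_A$ and $\bl_A(e) = O_A$, the image is $a + \tau(a) + O_A = a + (-a) + O_A$, where $a = \bl_A(\tilde a)$. This is exactly the image of $\bar a$ under the embedding $A/\tau \inj \Symo 3 A$, $\bar a \mapsto a + (-a) + O_A$, described before the statement. As $\tilde a$ varies over $\Tilde A/\tilde\tau$ and $e$ varies over the fibre $E_A\cong\IP^1$, the image traces out all of $A/\tau$ and forgets $e$, so $F_2$ (which has dimension $3$) is contracted onto the surface $A/\tau$.

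**Step three: bijectivity away from $F_1\cup F_2$ and the obstacle.** The remaining work is to prove the map is $1{:}1$ outside these two divisors. Here I would argue that $\Sym 3 {\bl_A}$ is an isomorphism over the locus of triples in $\Symo 3 A$ none of whose three summands equals $O_A$ (over which $\bl_A$ is already an isomorphism on each factor), and then analyse the remaining configurations—triples with exactly one or exactly two points equal to $O_A$—to see that they account precisely for the points lying on $F_1$ or $F_2$. Concretely, a triple $\{a_1, a_2, a_3\}$ with sum $O_A$ and at least one $a_i = O_A$ forces, by the summation condition, either the ``one point at $O_A$'' case (giving a pair $a, -a$ and hence a point of $A/\tau$, corresponding to $F_2$) or the ``all three at $O_A$'' case (the point $3\,O_A$, corresponding to $F_1$); the fibre structure over such points is governed by how $\Sym 3 {\bl_A}$ fails to be injective on the exceptional loci. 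I expect the \textbf{main obstacle} to be verifying that the fibres over points of $A/\tau$ away from $3\,O_A$ are exactly one-dimensional (a single copy of $E_A$) and that there is no further collapsing or identification on the complement, which requires a careful local analysis of $\Tilde A^3_0$ near the locus where a single coordinate sits on the exceptional divisor, descended through the $\kS_3$-action. The already-established normality of $\greg_A$ together with Zariski's Main Theorem should let me conclude that injectivity of the set-theoretic fibres outside $F_1\cup F_2$ upgrades to the morphism being an isomorphism there.
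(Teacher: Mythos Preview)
Your proposal is correct and follows essentially the same approach as the paper: contract $F_1$ via $\bl_A(E_A)=O_A$, contract $F_2$ via $\bl_A\circ\tilde\tau=\tau\circ\bl_A$, and then do a case analysis on how many summands of a point $a+b+c\in\Symo 3 A$ equal $O_A$.

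However, your Step~3 is over-engineered. The paper's argument is exactly the elementary case split you already wrote down, and nothing more: if no summand is $O_A$ the preimage is a single point (since $\bl_A$ is an isomorphism away from $O_A$); if one summand is $O_A$ the sum condition forces the other two to be $a,-a$, so the fibre lies in $F_2$; there is no ``exactly two at $O_A$'' case because $a+b+c=O_A$ with $b=c=O_A$ forces $a=O_A$, landing in the $F_1$ case. That is the whole proof. There is no need for a local analysis of $\Tilde A^3_0$, no descent through the $\kS_3$-action beyond what is automatic, and no appeal to Zariski's Main Theorem. The ``main obstacle'' you anticipate does not arise: the statement only claims $1{:}1$ outside $F_1\cup F_2$, and the set-theoretic fibre computation you sketched already delivers that.
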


\begin{proof}
The divisor $F_1$ is contracted to the point $3 O_A$ since
$\bl_A(E) = O_A$. Similarly, with the same notation as above,
\[
\Sym 3 {\bl_A}(\tilde a + \tilde\tau(\tilde a) + e) = a + (-a)  + O_A,
\]
so the divisor $F_2$ is contracted to the surface $A/\tau$.  Take a
point $a + b +c \in \Sym 3 A$. If none of these points is the origin
of $A$, it has a unique preimage by $\Sym 3 {\bl_A}$. If $c=0$, then
$b = -a$ and the fibre over this point belongs to the divisor
$F_2$. If $b=c=0$, then $a=0$ and the fibre over this point is the
divisor $F_1$. So $\Sym 3 {\bl_A}$ is an isomorphism outside of these
two divisors.
\end{proof}

Note that $F_1$ and $F_2$ intersect along the big diagonal of $\Sym 3
{E_A}$ parametrising $0$-cycles with at least one double point.

\subsection{The rational contraction} \label{ss:contractiontogenKum}

We now study the birational map $\gamma_A\colon \greg_A\dashrightarrow
\Kum 2 A$.  Since the variety $\greg_A$ is normal by
Proposition~\ref{prop:GA_normal} and since $\Kum 2 A$ is a projective
variety, the indeterminacy locus of $\gamma_A$ is a subset of
codimension at least two of $\greg_A$~\cite[Lemma~V.5.1]{Hartshorne}.

We first analyse the behaviour of $\gamma_A$ around the divisor $F_1$.
For this we introduce the $2$-dimensional \emph{Brian\c{c}on variety}
$B_{O_A}^3$ parametrising the locus of nonreduced subschemes in $\Kum
2 A$ supported at $O_A$. As the indeterminacy locus of $\gamma_A$ is
of codimension at least $2$, and since the morphism $\Sym 3 {\bl_A}$
contracts $F_1$ to the point~$3\,O_A$, by restriction we have a
rational map
\[
\left.\gamma_A\right|_{F_1}\colon F_1 \dashrightarrow B^3_{O_A},
\]
so the divisor $F_1$ is contracted by $\gamma_A$. Its rational image
is the \emph{centre} of $F_1$ for $\gamma_A$: our goal is to compute
this centre.

For this, let us recall the geometry of the variety $B_{O_A}^3$,
following~\cite[\S2]{Lehn}.  This depends only on the local geometry
of $A$ near $O_A$, so as in the proof
of Proposition~\ref{prop:GA_normal} and Remark~\ref{rem:non_cartier},
we take local coordinates $\bx, \by$ at the origin $O_A\in A$. This
identifies the tangent space $T_{A,O_A}$ with $\CC^2$, compatibly with
addition: hence for the purpose of local computation
we may replace $A$ by $\CC^2$. The \emph{curvilinear} subschemes supported at the
origin arise as limit points of triples of points that move along a
smooth curve. Their ideals have the form $\latt{ \by + \alpha \bx +
\beta \bx^2, \bx^3}$ or similarly with $\bx$~and~$\by$
exchanged. They form a line bundle over $\IP^1$, where the base~$\IP^1$ parametrises the tangent direction of the curve at the origin
(encoded by the parameter $\alpha$) and the fibre depends on the
parameter~$\beta$ that encodes the curvature of the curve. The
Brian\c{c}on variety $B_{O_A}^3$ is obtained by compactifying this
affine bundle by adding as point at infinity the non-curvilinear
subscheme $Z_\infty$ that arises as the limit of triples of points
going to the origin from three different directions: its ideal is
$I_\infty\coloneqq\latt{ \bx^2, \bx \by, \by^2}$.

\begin{proposition}\label{prop:birational_F1}\text{}
The birational map $\gamma_A$ contracts the divisor $F_1$ to the
point~$Z_\infty$.
\end{proposition}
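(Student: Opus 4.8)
The plan is to evaluate the rational map $\left.\gamma_A\right|_{F_1}\colon F_1\dashrightarrow B^3_{O_A}$ at a general point and show its value is the single point $Z_\infty$. Since the indeterminacy locus of $\gamma_A$ has codimension at least two, this map is a genuine morphism on a dense open subset of the divisor $F_1$, so it suffices to treat general points. I work in the local coordinates $\bx,\by$ at $O_A$ from the proof of Proposition~\ref{prop:GA_normal}, for which $E_A=\{\bx=0\}$ carries the coordinate $\bz=\by/\bx$; thus a general point of $F_1=\Sym 3 {E_A}$ is an unordered triple of \emph{distinct} tangent directions $\bz_1,\bz_2,\bz_3$. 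To compute $\gamma_A$ there I would approach it inside $\greg_A$ by a one-parameter family of triples of honest points $P_i=\bl_A(\tilde p_i)$ of $A$ collapsing to $O_A$ along these directions, and identify the flat limit of the reduced triple inside $\Hilb 3 A$.

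The decisive feature is that the approach is rigidified by the defining equation of $\greg_A$: the constraint $P_1+P_2+P_3=O_A$ must hold along the family. Writing $P_i=t\,\epsilon_i(1,\bz_i)+O(t^2)$ as $t\to 0$, the two leading conditions $\sum_i\epsilon_i=0$ and $\sum_i\epsilon_i\bz_i=0$ force $(\epsilon_1,\epsilon_2,\epsilon_3)\propto(\bz_2-\bz_3,\bz_3-\bz_1,\bz_1-\bz_2)$, the one-dimensional kernel of the constraint. With these rates the three points $P_i(t)=(\epsilon_i t,\epsilon_i\bz_i t)$ are non-collinear for $t\neq 0$: their area determinant computes to $-3(\bz_1-\bz_2)(\bz_2-\bz_3)(\bz_3-\bz_1)$, which is nonzero precisely because the slopes are distinct.

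Non-collinearity means that for $t\neq 0$ the evaluation map $\latt{1,\bx,\by}\to\cO_{Z_t}$ on the reduced triple $Z_t=\{P_1(t),P_2(t),P_3(t)\}$ is an isomorphism. As the family $\{Z_t\}$ is flat, the sections $1,\bx,\by$ extend to a basis of the locally free rank-$3$ pushforward across $t=0$, so they also span $\cO_{Z_0}$ for the flat limit $Z_0$; the length being $3$, the ideal of $Z_0$ must be exactly $I_\infty$, i.e.\ $Z_0=Z_\infty$. Because this limit is the \emph{same} non-curvilinear subscheme for every choice of distinct slopes, $\gamma_A$ sends the general point of the irreducible divisor $F_1$ to $Z_\infty$ and therefore contracts $F_1$ to $Z_\infty$. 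The crux — and the one point requiring care — is exactly that the limit is the non-curvilinear $Z_\infty$ and not a curvilinear scheme: three distinct \emph{directions} alone do not settle this, since a collinear collapse along a common secant yields a curvilinear limit, and it is the sum-zero constraint that locks the rates into the non-collinear configuration above.
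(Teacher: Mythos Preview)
Your argument is correct in substance and takes a genuinely different route from the paper's. The paper works chart by chart on $\Hilb 3 {\IC^2}$ in Haiman's coordinates: it computes the Cramer-rule expressions for the affine coordinates $\tilde a_1,\tilde a_2$ on $\cU_{(1,1,1)}$ as rational functions on $\Tilde A^3_0$, observes that $\tilde a_2$ has a pole along $\Tilde F_1$, and by symmetry in $\bx,\by$ the same holds on $\cU_{(3)}$; since $\Hilb 3 {\IC^2}\setminus(\cU_{(1,1,1)}\cup\cU_{(3)})=\{Z_\infty\}$, the generic point of $F_1$ is forced to $Z_\infty$. Your approach is instead a direct flat-limit computation along a single well-chosen arc, exploiting the pleasant fact that the sum-zero constraint pins down the leading rates and makes the family a pure dilation $Z_t=t\cdot Z_1$. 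This is more elementary and more geometric; the paper's method, on the other hand, sets up coordinate machinery that it reuses verbatim for the analysis of $F_2$.

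One step is stated too quickly. You write that ``as the family $\{Z_t\}$ is flat, the sections $1,\bx,\by$ extend to a basis of the locally free rank-$3$ pushforward across $t=0$''. Flatness gives local freeness of $p_\ast\cO_Z$, but it does \emph{not} by itself force these particular sections to remain a basis at $t=0$: spanning is an open condition, and a map $\cO_T^3\to p_\ast\cO_Z$ that is an isomorphism over $T\setminus\{0\}$ may well drop rank at $0$. The fix is immediate from your own setup. Since $Z_t=t\cdot Z_1$ and the three points $P_i(1)$ are non-collinear, the ideal $I_1$ is generated by $\bx^2-L_1$, $\bx\by-L_2$, $\by^2-L_3$ with each $L_j$ of degree $\le 1$; pulling back by the dilation and clearing denominators gives generators $\bx^2-t(\cdots),\ \bx\by-t(\cdots),\ \by^2-t(\cdots)$ of $I_t$, whence $I_0\supseteq\langle\bx^2,\bx\by,\by^2\rangle=I_\infty$ and therefore $Z_0=Z_\infty$. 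Equivalently, in Haiman's chart $\cU_{(2,1)}$ the six affine coordinates are degree-weighted ratios that all acquire a positive power of $t$ and hence vanish at $t=0$. With that sentence added, your proof is complete. (A minor cosmetic point: the area determinant of the $P_i(t)$ is $-3t^2(\bz_1-\bz_2)(\bz_2-\bz_3)(\bz_3-\bz_1)$; the $t^2$ factor does not affect your non-collinearity conclusion for $t\neq 0$.)
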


\begin{proof}
The behaviour of $\gamma_A$ at the divisor $F_1$ is a local property
over a neighbourhood of the origin $O_A$ of $A$ so we can study it by
computing a local model of the variety~$\greg_A$ in the neighbourhood
of the divisor~$F_1$, as we did in the proof of
Proposition~\ref{prop:GA_normal}.  Instead of directly computing
$\left.\gamma_A\right|_{F_1}$, it is equivalent, but more convenient,
to study the composite rational map
\[
g\colon \Tilde A^3_0\xrightarrow{\pi^\circ_{\Tilde A}}
\greg_A\overset{\gamma_A}{\dashrightarrow} \Kum 2 {A} \inj
\Hilb 3 A.
\]
We denote by $\Tilde F_1$ the preimage of $F_1$ in $\Tilde
A^3_0$. The rational map $g$ is defined at the generic point of
$\Tilde F_1$, and we want to compute its image and the indeterminacy
locus of the restriction of $g$ to $\Tilde F_1$.

\textit{The local coordinates}. As in the proof
of Proposition~\ref{prop:GA_normal} and Remark~\ref{rem:non_cartier},
we take local coordinates $\bx, \by$ at the origin $O_A\in A$. This
identifies the tangent space $\mathfrak a = T_{A,O_A}$ with $\CC^2$, compatibly with
addition: hence for the purpose of local computation near $\Tilde F_1$
we may replace $A$ by $\CC^2$, and in particular $\Hilb 3 A$ by $\Hilb 3 {\CC^2}$.
Again as in Proposition~\ref{prop:GA_normal} and
Remark~\ref{rem:non_cartier}, with coordinates $(\bx_1, \by_1)$,
 $(\bx_2, \by_2)$, $(\bx_3, \by_3)$ on $\CC^6$, using the relations $\by_i
= \bx_i \bz_i$ and introducing $\bw_1 \coloneqq \bz_1 - \bz_3$ and
$\bw_2 \coloneqq \bz_2 - \bz_3$ we get down to five variables $\bx_1,
\bx_2, \bw_1, \bw_2, \bz_3$ where $\Tilde A^3_0$ is defined by the
single relation $\bx_1 \bw_1 + \bx_2 \bw_2 = 0$ and $\Tilde F_1$~has
local equations $\bx_1 = \bx_2 = 0$.

\textit{The open covering}. 
In what follows, the notation $I\in \Hilb 3 {\CC^2}$
means that $I\subset \IC[\bx, \by]$ is the ideal of the corresponding
length three subscheme of $\Hilb 3 {\CC^2}$.  Following
Haiman~\cite{Haiman}, the Hilbert scheme $\Hilb 3 {\CC^2}$ is covered by
three affine charts labelled by the partitions of the integer~$3$, as
follows. Let
\[
\cB_{(1, 1, 1)}\coloneqq \{1, \bx, \bx^2\}, \quad \cB_{(2,
  1)}\coloneqq \{1, \bx, \by\}, \quad \cB_{(3)}\coloneqq \{1, \by,
\by^2\},
\]
and for any partition $\mu$ of the integer $3$, define the subset
\[
\cU_\mu\coloneqq\left\{I\in \Hilb 3 {\CC^2}\mid \cB_\mu \text{ spans }
\IC[\bx, \by]/{I}\right\}.
\]
By~\cite[Proposition~2.1]{Haiman}, the subsets $\cU_\mu$ are open
affine subvarieties that cover $\Hilb 3 {\CC^2}$. They cover the
Brian\c{c}on subvariety~$B^3_{O_A}$ of $\Hilb 3 {\CC^2}$ parametrising
length~$3$ subschemes supported at the origin, as follows: the
curvilinear subschemes of the form $\latt{ \by + \alpha \bx + \beta
\bx^2, \bx^3}$ belong to~$\cU_{(1, 1, 1)}$, similarly the
curvilinear subschemes of the form $\latt{ \by + \alpha \bx + \beta
\bx^2, \bx^3}$ belong to~$\cU_{(3)}$, whereas the noncurvilinear
point~$Z_\infty$ belongs to~$\cU_{(2, 1)}$, and since it is unique, we
have $\Hilb 3 {\CC^2}\setminus\left(\cU_{(1, 1, 1)} \cup \cU_{(3)}\right) =
\left\{Z_\infty\right\}$.

\textit{The Hilbert--Chow morphism in coordinates}.
Following~\cite[pp. 210--214]{Haiman}, the coordinate ring of the
chart $\cU_{(1, 1, 1)}$ is $\IC[e_1, e_2, e_3, a_0, a_1, a_2]$, and an
ideal $I_{(e, a)}\in \cU_{(1, 1, 1)}$, with coordinates $(e,
a)\coloneqq (e_1, e_1, e_3, a_0, a_1, a_2)$ is given by
\[
I_{(e, a)}\coloneqq \latt{ \bx^3-e_1\bx^2+e_2\bx-e_3, \by-(a_0+a_1\bx
+ a_2\bx^2)}.
\]
Whenever $e = (e_1, e_2, e_3) = 0$ and $a= (0, a_1, a_2)$, we get an
element $I_{(0, a)}\in B^3_{O_A}$ of the Brian\c{c}on variety. The
meaning of these affine coordinates is that if the zero locus
$\cV(I_{(e, a)})$ consists of three points of $\IC^2$
of coordinates $(\bx_1, \by_1)$, $(\bx_2, \by_2)$ and $(\bx_3, \by_3)$,
repeated with multiplicity, then in this chart we use the Vi\`ete
formula
\begin{align}\label{eq:viete}
\bx^3-e_1\bx^2+e_2\bx-e_3 = \prod_{i=1}^3 (\bx-\bx_i),
\end{align}
so that the coordinates $e_i$ are the elementary symmetric functions
in the variables $\bx_1, \bx_2, \bx_3$, whereas the coordinates $a_i$
are the coefficients of the Lagrange interpolation polynomial
$\phi_a(\bx) = a_0 + a_1 \bx + a_2 \bx^2$ such that $\by_i =
\phi_a(\bx_i)$ for $i=1, 2, 3$. We see that the coordinates $a_0, a_1,
a_2$ are well defined only when the three coordinates $\bx_1, \bx_2,
\bx_3$ are different, that is when $I_{(e, a)}$ is a reduced
subscheme.

The Chow quotient $\Sym 3 {\CC^2}$ is $A^3/\kS_3$, where any element
$\sigma$ of the symmetric group $\kS_3$ acts by $\sigma(\bx_i, \by_i)
= (\bx_{\sigma(i)}, \by_{\sigma(i)})$ for any $i=1, 2, 3$.  The
Hilbert--Chow morphism
\[
h_{|\cU_{(1, 1, 1)}}\colon \cU_{(1, 1, 1)} \to \Sym 3 {\CC^2}
\]
is defined by
\[
h(e, a) = ((\bx_1, \by_1),(\bx_2,\by_2),(\bx_3,\by_3)),
\]
where $\bx_i$ are the roots (not necessarily distinct) of the
polynomial $\bx^3-e_1\bx^2+e_2\bx-e_3$, and $\by_i = \phi_a(\bx_i)$ as
above.

In this chart we can describe the birational inverse map
\[
h^{-1}\colon\Sym 3 {\CC^2}\dashrightarrow \Hilb 3 {\CC^2}:
\]
the coordinates $(e_1, e_2, e_3)$ are always defined by
formula~\eqref{eq:viete}, even when the points $(\bx_i, \by_i)$ are
not distinct, but the coordinates $(a_0, a_1, a_2)$ are
not well defined when $\bx_1, \bx_2, \bx_3$ are not
distinct. More precisely, the interpolation $\by_i=\phi_a(\bx_i)$
means that these coordinates are defined by Cramer's rule:
\begin{equation}\label{eq:cramer}
	a_0 = \frac{
		\begin{vmatrix}
			\by_1 & \bx_1 & \bx_1^2\\
			\by_2 & \bx_2 & \bx_2^2\\
			\by_3 & \bx_3 & \bx_3^2
	\end{vmatrix}}
	{\begin{vmatrix}
			1 & \bx_1 & \bx_1^2\\
			1 & \bx_2 & \bx_2^2\\
			1 & \bx_3 & \bx_3^2
	\end{vmatrix}},\quad
	a_1 = \frac{
		\begin{vmatrix}
			1 & \by_1 & \bx_1^2\\
			1 & \by_2 & \bx_2^2\\
			1 & \by_3 & \bx_3^2
	\end{vmatrix}}
	{\begin{vmatrix}
			1 & \bx_1 & \bx_1^2\\
			1 & \bx_2 & \bx_2^2\\
			1 & \bx_3 & \bx_3^2
	\end{vmatrix}},\quad
	a_2 = \frac{
		\begin{vmatrix}
			1 & \bx_1 & \by_1\\
			1 & \bx_2 & \by_2\\
			1 & \bx_3 & \by_3
	\end{vmatrix}}
	{\begin{vmatrix}
			1 & \bx_1 & \bx_1^2\\
			1 & \bx_2 & \bx_2^2\\
			1 & \bx_3 & \bx_3^2
	\end{vmatrix}}.
\end{equation}

We get a very similar picture on the chart $\cU_{(3)}$, simply by
exchanging the roles of the variables $\bx$ and $\by$.  The coordinate
ring of the third chart $\cU_{(2, 1)}$ is slightly different. Its
coordinate ring is $\IC[a_1, a_2, b_1, b_2, c_1, c_2]$, and an ideal
$I_{(a, b, c)}\in \cU_{(2, 1)}$ with coordinates $(a, b, c) = (a_1,
a_2, b_1, b_2, c_1, c_2)$ is given by
\[
I_{(a, b, c)}\coloneqq \latt{ \bx^2-a_0 - a_1\bx-a_2\by, \bx\by-b_0-b_1\bx-b_2\by,\by^2-c_0 - c_1\bx-c_2\by}.
\]
with the following formulae, patiently deduced from~\cite[(2.16)~and~(2.17)]{Haiman}:
\[
	a_0 = a_2(b_1-c_2)+b_2(b_2-a_1),\quad
	b_0 = a_2c_1-b_1b_2,\quad
	c_0 = c_1(b_2-a_1)+b_1(b_1-c_2).
\]

\textit{Restriction to the fibre over the origin}.  Let us restrict
the computation to the variety $\Kum 2 A$. On the chart $\cU_{(1, 1,
  1)}$ the condition $\bx_1+\bx_2 + \bx_3 = 0$ gives $e_1 =0$, and the
condition $\by_1+\by_2 + \by_3 =0$ gives $3 a_0 + a_2 (\bx_1^2 +
\bx_2^2 + \bx_3^2) = 0$, so $3 a_0 - 2 a_2 e_2 = 0$: the local
equations of $\Kum 2 A$ are thus $e_1= 0$ and $a_0 = \frac{2}{3}
a_2e_2$. On the variety~$\Tilde A^3_0$, we have the relations
\begin{equation}\label{eq:model}
	\begin{aligned}
	\bx_3 &= - \bx_1 - \bx_2,&\quad
	\by_1 &= (\bw_1 + \bz_3) \bx_1,&\quad
	\by_2 &= (\bw_2 + \bz_3) \bx_2,\\
	\bx_2 \bw_2 &= -\bx_1 \bw_1,&\quad
	\by_3 &= \bz_3 \bx_3 = - (\bx_1 + \bx_2)\bz_3,
	\end{aligned}
\end{equation}
and we interpret the coordinates $a_i$ as rational maps $\Tilde
 a_i\colon \Tilde A^3_0 \dashrightarrow \IC$. An elementary
computation, starting from Formula~\eqref{eq:cramer} gives:
\begin{align}\label{eq:functions_a}
	\Tilde a_1(\bx_1, \bw_1, \bx_2, \bw_2, \bz_3)& = \bz_3 + \bw_1\bw_2\frac{\bw_1^2+\bw_2^2-4\bw_1\bw_2}{(\bw_1-2\bw_2)(2\bw_1-\bw_2)(\bw_1+\bw_2)},\\
	\Tilde a_2(\bx_1, \bw_1, \bx_2, \bw_2, \bz_3)&=\frac{-3\bw_1\bw_2^2(\bw_1-\bw_2)}{\bx_1(\bw_1-2\bw_2)(2\bw_1-\bw_2)(\bw_1+\bw_2)}.
\end{align}
We see that $\Tilde a_1$ defines a rational function on $\Tilde F_1$,
but $\Tilde a_2$ does not, because of its pole along $\bx_1 = 0$.  Let
$G\subset \Tilde F_1$ be the support of the $1$-cycle defined by the
numerator of this function: that is,
\begin{align}\label{eq:closure_graph_a2}
	G\coloneqq \cV\left(\bw_1\bw_2^2(\bw_1-\bw_2)\right)_{\red}\subset\Tilde F_1.
\end{align}
The fact that the coordinate function $\Tilde a_2$ cannot be extended
to $\Tilde F_1\setminus G$ means that the rational image of $F_1$ by
$\gamma_A$ does not land in the open subset $\cU_{(1, 1, 1)}\cap \Kum
2 A$. By exchanging the roles of the variables $\bx$ and $\by$, we get
that it does not land in the open subset $\cU_{(3)}\cap \Kum 2 A$
either. Since $\Hilb 3 A\smallsetminus\left(\cU_{(1, 1, 1)} \cup
\cU_{(3)}\right) = \left\{Z_\infty\right\}$, the conclusion is that
$\gamma_A$ contracts the generic point of $F_1$ to $Z_\infty$, so the
restriction $\left.\gamma_A\right|_{F_1}$ extends to the whole of
$F_1$ and contracts it to the noncurvilinear point (however,
$\gamma_A$ itself is not defined on the whole $F_1$). This concludes
the proof.
\end{proof}

In Appendix~\ref{app:complements_prop_birational_F1} we give three
alternative arguments, the first one using saturation of ideals, the
second one using the computation on the chart $\cU_{(2, 1)}$ to see
more explicitly the contraction of the divisor $F_1$ and the third one
using explicit projective coordinates.

We now analyse the behaviour of $\gamma_A$ around the divisor $F_2$.
The fixed locus of the natural involution $\Kum 2 \tau$ induced by
$\tau$ on $\Kum 2 A$ consists of the Kummer surface $\Kum 1 A$,
embedded in $\Kum 2 A$ as the locus of subschemes supported on
$0$-cycles of the form $a + (-a) + O_A\in \Symo 3 A$, plus $36$
isolated points (see for instance~\cite[\S4.3.1]{BNWT}). This
embedding of the Kummer surface yields the following diagram:
\[
\xymatrix{\Kum 1 A \ar@{^(->}[r]\ar[d]_\veps & \Kum 2 A\ar[d]^{h_A} \\
	A/\tau \ar@{^(->}[r] & \Symo 3 A
}
\]

\begin{proposition}\label{prop:birational_F2}
The birational map $\gamma_A$ contracts the divisor $F_2$ to the
Kummer surface $\Kum 1 A$ embedded in $\Kum 2 A$.
\end{proposition}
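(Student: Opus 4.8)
The plan is to exploit the fact, established in \S\ref{ss:divisorial_chow}, that the birational morphism $\Sym 3 {\bl_A}$ already contracts $F_2$ to the surface $A/\tau \subset \Symo 3 A$, and then to compose with the birational inverse of the Hilbert--Chow morphism. Since by Proposition~\ref{prop:GA_normal} the variety $\greg_A$ is normal and $\Kum 2 A$ is projective, the indeterminacy locus of $\gamma_A$ has codimension at least two, so $\gamma_A$ is defined at the generic point of the divisor $F_2$. The crucial difference with the analysis of $F_1$ in Proposition~\ref{prop:birational_F1} is that here the generic image cycle is \emph{reduced}, so that no local model of Brian\c{c}on type is required.

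First I would track the image. The morphism $\Sym 3 {\bl_A}$ sends a point $\tilde a + \tilde\tau(\tilde a) + e$ of $F_2$ to the cycle $a + (-a) + O_A \in A/\tau$, where $a = \bl_A(\tilde a)$, collapsing the factor $E_A$. For $a$ outside the $2$-torsion the three points $a$, $-a$, $O_A$ are pairwise distinct, so this $0$-cycle is reduced and $h_A$ is an isomorphism over it; hence $h_A^{-1}$ is defined at the generic point of $A/\tau$ and sends $a + (-a) + O_A$ to the reduced length-three subscheme $\{a, -a, O_A\}$. By definition this subscheme lies in the embedded Kummer surface $\Kum 1 A$; more precisely, the commutative diagram preceding the statement identifies $h_A^{-1}$ restricted to $A/\tau$ with the birational inverse $\veps^{-1}$ of the minimal resolution $\veps\colon \Kum 1 A \to A/\tau$, this inverse being undefined at the sixteen nodes, so that $\left.\gamma_A\right|_{F_2}$ does not extend to a morphism on all of $F_2$.

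Combining the two steps, $\left.\gamma_A\right|_{F_2} = \veps^{-1} \circ \left.\Sym 3 {\bl_A}\right|_{F_2}$ as rational maps, so its image is dense in $\Kum 1 A$. Since $\Sym 3 {\bl_A}$ contracts the three-dimensional $F_2$ onto the two-dimensional $A/\tau$ and $\veps^{-1}$ is birational, $\gamma_A$ contracts $F_2$ onto $\Kum 1 A$, the generic fibre being the collapsed $E_A \cong \IP^1$. The only point requiring real care is the reducedness of the generic image cycle: it is exactly what makes $h_A^{-1}$ unambiguous here and lets us avoid the local Brian\c{c}on computation that was necessary for $F_1$.
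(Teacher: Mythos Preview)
Your argument is correct and takes a genuinely simpler route than the paper's. The paper first draws a commutative square dominating $F_2$ by $\Kum 1 A \times E_A$, then explicitly notes that this alone is \emph{not enough} (the diagram only commutes after projecting to $\Symo 3 A$, so it does not by itself pin down the dashed arrow upstairs in $\Kum 2 A$), and therefore falls back on a local coordinate computation in the chart $\cU_{(1,1,1)}$ parallel to the one for $F_1$: it checks that $e_3$ vanishes and that the rational function $\tilde a_2$ extends by zero along $\tilde F_2$, so the image lies in the locus $e_3 = a_2 = 0$, which was computed beforehand to be the embedded $\Kum 1 A$.

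You bypass this entirely with the observation that the generic image cycle $a + (-a) + O_A$ is \emph{reduced} (for $a$ off the $2$-torsion), so $h_A$ is an isomorphism over it and $\gamma_A = h_A^{-1}\circ \Sym 3{\bl_A}$ is already a morphism at the generic point of $F_2$, with value the reduced subscheme $\{a,-a,O_A\}\in\Kum 1 A$. This is cleaner and more conceptual: it isolates precisely why $F_2$ is easier than $F_1$ (reduced versus non-reduced image), whereas the paper treats both divisors by analogous explicit computations. What the paper's approach buys in exchange is uniformity of method and, incidentally, explicit local equations of $\Kum 1 A$ inside $\Kum 2 A$; but for the statement as written your argument suffices.
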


\begin{proof}
Under the embedding of $\Kum 1 A$ in $\Kum 2 A$, if $a\in A$ is a
nonzero $2$-torsion point with image $\bar a\in \Kum 1 A$, the
exceptional fibre $\veps^{-1}(\bar a)$ is sent to the curve
$h_A^{-1}(2a + O_A)$ parametrising the nonreduced length two
subschemes of $A$ supported at~$a$ (the third support point being the
origin). This is a rational curve as it is isomorphic to the
Brian\c{c}on variety $B_a^2\cong\IP^1$. The embedding of the fibre
$\veps^{-1}(\overline{O_A})$ in $B^3_{O_A}$ is similar, and can be
computed as follows, using the same method as in the proof of
Proposition~\ref{prop:birational_F1}. We compute, locally over the
origin, the image of the composite map on the first row:
\[
\xymatrix{\Tilde A \ar[r]^-{2:1} \ar[d]_{\bl_A}&\Kum 1 A
  \ar@{^(->}[r]\ar[d]_\veps & \Kum 2 A\ar[d]^{h_A}
  \\ A\ar[r]^-{2:1} & A/\tau \ar@{^(->}[r] & \Symo 3 A }
\]

Let $\bx, \by$ be local coordinates around the origin of $A$ and
$(\bx, \bz)$ be local coordinates around $\bl_A^{-1}(O_A)$, with
$\by=\bx\bz$. Using notation as above, we put $a = (\bx_1, \by_1) =
(\bx, \by)$, then $-a = (\bx_2, \by_2) = (-\bx, -\by)$ and $(\bx_3,
\by_3) = (0, 0)$. Thus, on the chart $\cU_{(1, 1, 1)}$, the coordinate
functions are
\[
e_1 = 0, \quad e_2= \bx^2, \quad e_3 = 0, \quad a_0 = 0, \quad a_1 =
\bz, \quad a_2 = 0.
\]
Putting $\bx = 0$, we see that the image of
$\veps^{-1}(\overline{O_A})$ in $\Kum 2 A$ consists of the rational
curve in $B_{O_A}^3$ parametrising the subschemes with zero curvature,
\ie where $\beta=0$ in the description given above.

The minimal resolution morphism $\veps$ is the blowup of the classes
of the sixteen $2$-torsion points of $A$.  Since the morphism $\bl_A$
blows up the class of the origin, it factorises through the blowup
$\veps'$ of the classes of the fifteen nonzero ones:
\begin{align}\label{diag:Kummer_surface}
	\xymatrix{\Kum 1 A  \ar[dr]^\veps \ar[d]_{\veps'}&\\
		 \Tilde{A}/{\tilde\tau} \ar[r]^{\bl_A}& A/\tau}
\end{align}
To show that the map $\gamma_A$ contracts the divisor $F_2$ to the
Kummer surface, we first observe that this map is dominated by the
embedding of $\Kum 1 A$ in $\Kum 2 A$, as shown in the following
diagram, where $\pr_1$ denotes the projection to the first factor:
\[
	\xymatrix{ \Kum 1 A\times E_A
          \ar[r]^-{\pr_1}\ar[d]_-{\tilde\veps\times\id}&
          \Kum 1 A \ar@{^(->}[r] & \Kum 2 A
          \ar[d]^{h_A}\\ \Tilde{A}/{\tilde\tau} \times E_A \ar[r]&
          F_2 \ar@{-->}[ur]^{\gamma_A|_{F_2}} \ar[r]_{\Sym 3 {\bl_A}}
          & \Symo 3 A}
\]
But this is not enough to show that the map $\gamma_A$ contracts the
divisor $F_2$, so we use a similar computation as in the proof of
Proposition~\ref{prop:birational_F1}. That is, we compute the rational
map $g$ in a neighbourhood of the premiage $\Tilde{F}_2$ of $F_2$ in
$\Tilde{A}^3_0$, which is given as the image of the morphism:
\[
\Tilde A\times E_1\to \Tilde{A}^3_0, \quad (\tilde a, e)\mapsto (\tilde a, \tilde\tau(\tilde a), e).
\]

Taking local coordinates $(\bx_1, \by_1), (\bx_2, \by_2), (\bx_3,
\by_3)$ of $A^3$ and $\by_i = \bx_1\bz_i$, the divisor~$F_2$ is given
by the relations $\bx_1 + \bx_2 = 0$ and $\by_1 + \by_2=0$, so in the
local coordinates $\bx_1, \bx_2, \bw_1, \bw_2, \bz_3$ of $\Tilde
A^3_0$, it has local equations $\bx_1 + \bx_2 = 0$ and $\bw_1-\bw_2=
0$. In the chart $\cU_{(1, 1, 1)}$, we observed that the local
coordinates of the variety $\Kum 2 A$ in $\Hilb 3 A$ are $(e_2, e_3,
a_1, a_2)$. The coordinate $e_3=0$ is zero along $\Tilde F_2$ and we
see with Equation~\eqref{eq:functions_a} that the function $\Tilde
{a_2}$ extends generically to zero along $\Tilde F_2$. This shows that
$\gamma_A$ contracts $F_2$ to the surface of local equations $e_1 =a_2
= 0$: these are the local equations of $\Kum 1 A$ that we computed
above.
\end{proof}

\section{The genus two curves}\label{s:genustwo}

Let $C$ be a smooth projective curve of genus $g$. For any
$n\geq 0$, we denote by $\Sym n C$ the symmetric product of $C$, by
$\Phi_n\colon C^n \to \Sym n C$ the Chow quotient, by $\Pic n C$ the
moduli space of isomorphism classes of degree $n$ line bundles on $C$
and by $\Hilb n C$ the Hilbert scheme parametrising length $n$
zero-dimensional subschemes of $C$.

For $n\geq 1$ and for any $(p_1, \dots, p_n)\in C^n$, we denote by
$p_1+\cdots+p_n\in \Sym n C$ the formal sum, which we interpret,
depending on the context, as an element of the Chow quotient, as a
divisor on $C$ or as a length $n$ subscheme of $C$, since the
Hilbert--Chow morphism $\Hilb n C\to \Sym n C$ is an isomorphism. We
denote by $\Delta_n\subset \Sym n C$ the locus of nonreduced
subschemes.

For any $p_1+\cdots+p_n\in \Sym n C$, we denote by
$\cO_C(p_1+\cdots+p_n)\in \Pic n C$ the corresponding isomorphism
class of line bundles, or equivalently linear equivalence class
of divisors on $C$. We denote by $\sim$ the linear equivalence
relation between divisors on $C$. We define the Jacobian $\Jac C$ of
$C$ as the group $\Pic 0 C$ of isomorphism classes of degree zero line
bundles on $C$.

From now on, we assume that $C$ is a genus two curve and we put $A =
\Jac C$. All notation introduced above and indexed by an abelian
surface $A$ will be indexed by $C$ for more readibility. That is:
\[
\greg_C\coloneqq \greg_{\Jac C}, \quad h_C\coloneqq h_{\Jac C}, \quad
\gamma_C\coloneqq \gamma_{\Jac C}, \quad E_C\coloneqq E_{\Jac C},
\quad\bar\alpha_C = \bar\alpha_{\Jac C}.
\]

\subsection{The Jacobian of a genus two curve} \label{ss:jacobian}

Let $C$ be a genus two curve.  It is
hyperelliptic: we denote the hyperelliptic involution by $\sigma$ and
the associate ramified canonical double covering by $\pi\colon C\to\IP^1$. The
ramification locus consists of six distinct points, the
\emph{Weierstrass points} of $C$. The curve $C$ thus admits an
equation of the form $z^2=f(x,y)$, where $f$ is a degree six
homogeneous polynomial vanishing at the six branch points, so we may
consider $C$ as a curve in the weighted projective plane
$\IP_{113}$ with homogeneous coordinates $[x:y:z]$. We denote this
embedding of~$C$ by $\iota\colon C\inj \IP_{113}$.

With respect to these coordinates, for $[x:y:z]\in C$ we have
$\pi([x:y:z])=[x:y]$ and $\sigma([x:y:z])=[x:y:-z]$.  We may choose
coordinates on $\IP^1$ such that the branch points of $\pi$ are
$[1:0], [0:1], [1:1]$ and three other distinct points
$[\lambda_i:1]$. Then the equation of $C$ in $\IP_{113}$ is
\begin{align}\label{eq:curve}
	z^2 =  xy(x-y)(x-\lambda_1y)(x-\lambda_2y)(x-\lambda_3y)\eqqcolon f(x, y).
\end{align}
We denote by $\infty \coloneqq [1:0:0]\in C$ the ramification point over $[1:0]$.

Every element of $\Jac C$ has a unique representative, called a
\emph{reduced divisor} (see~\cite[Chapter 3, \S2]{Mumford_TataII}), which is one of
\begin{enumerate}
	\item  $\cO_C(p_1+p_2-2\infty)$ with $p_i\in C\setminus\{\infty\}$, $p_1\neq p_2$ and $p_1\neq\sigma(p_2)$;
	\item $\cO_C(2p-2\infty)$ with $p\in C\setminus\{\infty\}$ and $p\neq\sigma(p)$;
	\item $\cO_C(p-\infty)$ with $p\in C\setminus\{\infty\}$;
	\item $\cO_C$.
\end{enumerate}
In particular, $p+\sigma(p)\sim 2\infty$ for all $p\in C$.  It is a
classical result, see for instance~\cite[Lecture~3]{Mumford_curves},
that the Abel--Jacobi map
\begin{align}\label{eq:abel_jacobi2}
\abel_2\colon \Sym 2 C\to \Jac C, \quad p_1 + p_2 \mapsto \cO_C(p_1 + p_2 - 2 \infty)
\end{align}
is the blowup $\bl_C$ of the origin of $\Jac C$, that is $\Tilde A =
\Tilde{\Jac C} \cong \Sym 2 C$.
The exceptional divisor of the blowup
is thus
\begin{equation}\label{eq:aj_exc}
E_C \coloneqq \{p+\sigma(p)\mid p\in C\}\subset \Sym 2 C.
\end{equation}
The sign involution on $\Jac C$ is given by $\tau(L) = L^{-1}$. For
any $p_1, p_2\in C$ we have
\[
p_1+p_2 +\sigma(p_1) + \sigma(p_2) \sim 4\infty,
\]
so $\cO_C(p_1+p_2-2\infty)^{-1} = \cO_C(\sigma(p_1) + \sigma(p_2) -
2\infty)\in \Jac C$. It follows that $\tau = \sigma^\ast$ and that the
involution $\tilde\tau$ on $\Sym 2 C = \Tilde{\Jac C}$ introduced
in~\ref{ss:divisorial_chow} is given by $\tilde\tau(p_1+p_2) =
\sigma(p_1)+\sigma(p_2)$.

By the Poincar\'e formula~\cite[11.2.1]{BL}, the theta divisor giving
the principal polarisation of $\Jac C$ is the image of the
Abel--Jacobi map $C\to\Jac C$, that is
$\Theta\coloneqq\{p-\infty \mid p\in C\}$, and $\Theta^2 =2$. If $C$
is generic in its moduli space, then $\NS {\Jac C}= \IZ
\Theta$. Following~\S\ref{ss:intro_genKm}, we denote by $h$ the image
of $\Theta$ in $\Kum 2 {\Jac C}$ and we observed above that the
minimal possible polarisation degree of $\Kum 2 {\Jac C}$ is $q_{\Kum
  2 {\Jac C}}(3h - \delta)= 12$.

\begin{remark}
In this setup, Diagram~\eqref{diag:Kummer_surface} has rich and famous
geometric properties. Recall the situation:
\begin{align*}
	\xymatrix{\Kum 1 {\Jac C} \ar[dr]^\veps
          \ar[d]_{\tilde\veps}&\\ \Sym 2 C/{\tilde\tau}
          \ar[r]^{\abel_2}& \Jac C/\tau}
\end{align*}
The double covering $\pi\colon C\to \IP^1$ induces a $4:1$ covering
\[
\Sym 2 \pi\colon \Sym 2 C \to \Sym 2 {\IP^1}\cong \IP^2,
\]
which factors through $\tilde\tau$ to a double covering $\Sym 2
C/{\tilde\tau}\to \IP^2$ whose branch locus is the union of the six
lines
\[
\{w+x\mid x\in \IP^1\}\in \Sym 2 {\IP^1},
\]
where $w$ is one the six branch points of $\pi$. Each of these curves
defines a line in $\IP^2$ and the branch locus is thus a sextic curve
in $\IP^2$ that consists of six lines tangent to a conic, meeting in
$15$ points that are blown up by $\tilde\veps$ (see for instance
\cite[\S 10.2]{GS}, with a complementary point of view). 
\end{remark}

\subsection{The linear system of cubics}\label{ss:lin_syst_cubics}

The linear system of cubics in $\IP_{113}$ is $5$-dimensional:
\[
\HH^0(\IP_{113}, \cO_{\IP_{113}}(3))=\Span(x^3,x^2y,xy^2,y^3,z).
\]
We put $\cL\coloneqq \iota^\ast \cO_{\IP_{113}}(3)$, so $|\cL|$ is the
complete linear system of cubics on the curve~$C$. We have
$h^0(C,\cL)=5$ by Riemann--Roch (see also the simple part of Mattuck's
argument in the proof below) and so
$\HH^0(C,\cL)=\HH^0(\IP_{113},\cO_{\IP_{113}}(3))$ with $|\cL| \cong \IP^4$.

Any divisor $D\in |\cL|$ has an equation of the form:
\begin{align}\label{eq:general_cubic}
\alpha_0 x^3 + \alpha_1 x^2y + \alpha_2 xy^2 + \alpha_3 y^3 + \alpha_4
z = 0,
\end{align}
with $\alpha_i\in \IC$, so a generic $D$ cuts the curve $C$ in six
distinct points (see~Figure~\ref{fig:cubic_interpolation}). It
follows that $\deg(\cL)=6$, so the line bundle $\cL$ is very
ample~\cite[Corollary~IV.3.2]{Hartshorne}.

\begin{figure}[!h]
	\begin{center}
		\includegraphics[width = 4cm, height = 4cm]{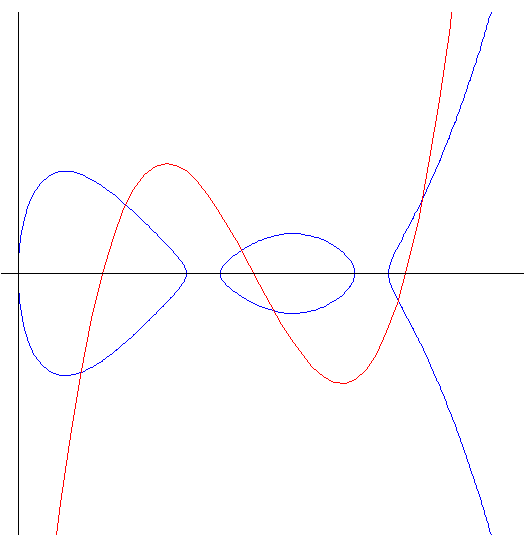}
		\caption{A genus $2$ curve (in blue) and a cubic
                  interpolation (in red) intersecting in $6$ points on
                  the affine chart $y=1$ of coordinates $x$ (abscissa)
                  and $z$ (ordinate).}
		\label{fig:cubic_interpolation}
	\end{center}
\end{figure}

\begin{lemma}\label{lem:6infinity}
We have $|\cL| = |6\infty|$ and $\pi_\ast \cL\cong \cO_{\IP^1} \oplus
\cO_{\IP^1}(3)$.
\end{lemma}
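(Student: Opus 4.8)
The plan is to pin down the line bundle $\cL$ concretely and then read off its pushforward by the projection formula. For the first assertion $|\cL| = |6\infty|$, it suffices to exhibit a single section of $\cL = \iota^\ast\cO_{\IP_{113}}(3)$ whose divisor is exactly $6\infty$, and the natural candidate is the cubic $y^3$. On $C$ the locus $\{y=0\}$ consists of points $[x:0:z]$ with $z^2 = f(x,0) = 0$, hence $z=0$, so the only such point is $\infty = [1:0:0]$. Thus the divisor of zeros of $y^3$ is supported at $\infty$, and since its degree equals $\deg\cL = 6$ it must equal $6\infty$. Therefore $\cL \cong \cO_C(6\infty)$ and $|\cL| = |6\infty|$. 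If one wants the local order directly, near $\infty$ the equation reads $z^2 = y\,u(y)$ with $u(0)\neq 0$, so $z$ is a uniformiser and $\mathrm{ord}_\infty(y) = 2$, giving $\mathrm{ord}_\infty(y^3) = 6$ again.

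For the pushforward I would next observe that the divisor $6\infty$ is pulled back from $\IP^1$. Since $\infty$ is a Weierstrass point, $\pi^{-1}([1:0]) = 2\infty$ as a divisor, so $\pi^\ast\cO_{\IP^1}(1)\cong\cO_C(2\infty)$ and consequently
\[
\cL \cong \cO_C(6\infty) \cong \pi^\ast\cO_{\IP^1}(3).
\]
Applying the projection formula to the finite morphism $\pi$ then gives
\[
\pi_\ast\cL \cong \pi_\ast\!\left(\pi^\ast\cO_{\IP^1}(3)\right) \cong \cO_{\IP^1}(3)\otimes \pi_\ast\cO_C,
\]
so everything is reduced to the computation of $\pi_\ast\cO_C$.

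The real content lies in showing $\pi_\ast\cO_C \cong \cO_{\IP^1}\oplus\cO_{\IP^1}(-3)$, since the rank and degree of this bundle do not determine its splitting type on their own. I would use the $\sigma$-eigenspace decomposition $\pi_\ast\cO_C = (\pi_\ast\cO_C)^+\oplus(\pi_\ast\cO_C)^-$: the invariant part is $\cO_{\IP^1}$ (functions pulled back from $\IP^1$), while the anti-invariant part $M$ is the line bundle generated by $z$. The multiplication map $M^{\otimes 2}\to\cO_{\IP^1}$ is given by $z^2 = f$ and therefore vanishes along the branch divisor $\{f=0\}$ of degree $6$, so $M^{\otimes 2}\cong\cO_{\IP^1}(-6)$ and hence $M\cong\cO_{\IP^1}(-3)$. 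Substituting back yields
\[
\pi_\ast\cL \cong \cO_{\IP^1}(3)\oplus\cO_{\IP^1},
\]
as claimed. As a consistency check this recovers the count announced before the lemma, since $h^0(\IP^1,\pi_\ast\cL) = h^0(\cO_{\IP^1}(3)) + h^0(\cO_{\IP^1}) = 4+1 = 5 = h^0(C,\cL)$.

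The main obstacle is precisely this last step, namely ruling out the competing splitting types $(2,1)$ and $(1,2)$, which share the correct total degree $3$. The eigenspace argument settles it cleanly, but if one prefers a purely numerical route the splitting type can be determined by computing $h^0(C,\cL\otimes\pi^\ast\cO_{\IP^1}(-n)) = h^0(C,\cO_C((6-2n)\infty))$ via Riemann--Roch together with $K_C\sim 2\infty$; the decisive value is $h^0(\cO_C(2\infty)) = 2$, the hyperelliptic pencil, which is compatible with $(3,0)$ but not with $(2,1)$, and thereby closes the argument.
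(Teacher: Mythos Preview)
Your proof is correct and follows essentially the same approach as the paper: for the first assertion the paper invokes Mumford's observation that for a degree $d$ curve $D\subset\IP_{113}$ with equation $g$, the rational function $g/y^d$ on $C$ has divisor $(D\cap C)-2d\infty$, which is exactly your argument specialised to $g=y^3$; for the second assertion the paper likewise writes $\cL\cong\pi^\ast\cO_{\IP^1}(3)$ and applies the projection formula together with the cyclic-cover splitting $\pi_\ast\cO_C\cong\cO_{\IP^1}\oplus\cO_{\IP^1}(-3)$, citing \cite[\S I.17]{BHPV} where you instead spell out the eigenspace argument directly.
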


\begin{proof}
The equality $|\cL| = |6\infty|$ is an application to our situation of
classical results of Mattuck and Mumford.  If $C$ is a smooth
curve of genus $g$, and if $n > 2g-2$, then Mattuck~\cite{Mattuck}
shows that the divisor class map
\[
\delta \colon \Sym n C \to \Pic n C, \quad p_1 + \cdots +p_n \mapsto \cO_C(p_1+\cdots+p_n)
\]
is a $\PP^{n-g}$ bundle. For completeness we give the part of
Mattuck's argument that we need: we do not need the local triviality,
which is harder. Writing $\xi\coloneqq p_1 + \cdots + p_n\in
\Sym n C$, the fibre of $\delta$ over $\cO_C(\xi)$ is by definition the
complete linear system $|\xi|$. Since $\deg(K_C - \xi) = 2g-2-n<0$, we
have $h^0(X, K_C-\xi) = 0$, hence $h^0(C, \xi) = n-g+1$ by the
Riemann--Roch theorem. Thus $|\xi|\cong \IP^{n-g}$ and this dimension
does not depend on $\xi$.

We apply this result with $g=2$, for the genus two
curve $C$ introduced above, and we keep the same notation as above.

Secondly, by a result of Mumford~\cite[Chapter~3,\S2]{Mumford_TataII},
if $D$ is a degree $d$ irreducible curve $D$ in $\IP_{113}$ and by
$p_1, \ldots, p_k$ are the not necessarily distinct intersection
points of $D$ with the genus two curve~$C$, then the divisor
$p_1+\cdots + p_k$ is linearly equivalent to $(2d)\infty$. In
particular, we have $k = 2d$.  Again we recall the proof for
completeness. The curve $D$ has equation $g(x, y, z) = 0$, with $g$ a
weighted homogeneous polynomial of degree $d$, and the curve $C$ is given
by the equation~\eqref{eq:curve}.  The quotient $\frac{g(x, y,
  z)}{y^d}$ thus defines a rational function on $C$ whose divisor is:
\[
p_1+\cdots+p_k - (2d) \infty.
\]
Since this divisor has degree zero, we have $k = 2d$.  We apply this
result with $g=2$ and $d=3$ and $k=6$, this gives $|\cL| = |6\infty|$.

The computation of the direct image $\pi_\ast \cL$ is a
straightforward application of the cyclic covering methods~\cite[\S
  I.17]{BHPV}. Since the canonical sheaf of $C$ is $\omega_C\cong
\cO_C(2\infty)$, by the first assertion we get that $\cL\cong
\omega_C^{\otimes 3}$. The double covering $\pi$ is branched at the
six Weierstrass points, so it is determined by the line bundle $L =
\cO_{\IP^1}(3)$ such that $L^{\otimes 2}\cong \cO_{\IP^1}(6)$ and
$\pi_\ast \cO_C \cong \cO_{\IP^1}\oplus L^{-1}$.  We have
$\omega_C\cong \pi^\ast (\omega_{\IP^1}\otimes L) \cong \pi^\ast
\cO_{\IP^1}(1)$, so by the projection formula
\[
\pi_\ast \omega_C^{\otimes 3}\cong \pi_\ast\pi^\ast
\cO_{\IP^1}(3)\cong \cO_{\IP^1}(3)\otimes \pi_\ast \cO_{\IP^1}\cong
\cO_{\IP^1}\oplus \cO_{\IP^1}(3).
\qedhere
\]
\end{proof}

The Abel--Jacobi map is the map
\begin{align}\label{eq:Jacobi6}
	\abel_n\colon \Sym n C \to \Jac C, \quad \quad p_1 + \cdots
        +p_n \mapsto \cO_C(p_1+\cdots+p_n- n\infty),
\end{align}
By Lemma~\ref{lem:6infinity} we have $|\cL|\cong
\abel_6^{-1}(\{\cO_C\})$.  Let us rephrase this: all length $6$
zero-dimensional subschemes of $C$ that admit a cubic interpolation
belong to the fibre over $\cO_C$ of the Abel--Jacobi map $\abel_6$. But since
the linear system on $C$ cut out by the cubics has the correct
dimension, this is a characterisation of this fibre.

\subsection{Interpretation of the linear system of cubics as a degeneracy locus}
\label{ss:degen}

Let us describe the linear system $|\cL|$ differently.  Consider a
length $6$ subscheme $\xi\subset C$, that is $\xi\in \Sym 6 C$. Start
with the exact sequence
\[
0  \To \cI_\xi \To \cO_C \To \cO_\xi \To 0,
\]
and tensor by $\cL$:
\[
0 \To \cL\otimes \cI_\xi\To \cL \To \cL\otimes\cO_\xi\To 0.
\]
Since $\deg(\cL)>\deg(K_C)$, we have by Serre duality $h^1(C,\cL)=0$,
so we get a four terms exact sequence in cohomology:
\begin{equation}\label{4_terms_seq}
	0\rightarrow \HH^0(C,\cL\otimes \cI_\xi)\rightarrow
        \HH^0(C,\cL)\xrightarrow{\rest_\xi}
        \HH^0(C,\cL\otimes\cO_\xi)\rightarrow \HH^1(C,\cL\otimes
        \cI_\xi)\rightarrow 0.
\end{equation}

Since $I_\xi\cong \cO_C(-\xi)$, the sheaf $\cL(-\xi)\coloneqq
\cL\otimes \cI_\xi$ is a degree zero line bundle.  We are interested
in the restriction morphism $\rest_\xi$, whose rank is at most $5$:
\[
\rest_\xi\colon \IC^5\cong \HH^0(C,\cL)\to\HH^0(\xi,\cL\vert_\xi)\cong\IC^6.
\]

\begin{proposition} \label{prop:rank4}
We have $|\cL| = \{\xi\in\Sym 6 C\mid \rank(\rest_\xi) = 4\}$, and
there is no $\xi\in \Sym 6 C$ such that $\rank \rest_\xi < 4$.
\end{proposition}

\begin{proof}
For any $\xi\in \Sym 6 C$, the exact sequence~\eqref{4_terms_seq} says
that
\[
\rank(\rest_\xi) = 5 - h^0(C, \cL(-\xi)),
\]
so $\rank(\rest_\xi) = 4$ means that $h^0(C, \cL(-\xi)) = 1$. Since
$\cL(-\xi)$ has degree zero, this is equivalent to $\cL(-\xi) \cong
\cO_C$, or equivalently to $\xi\in|\cL|$.  Moreover, if $\rank
\rest_\xi < 4$ then $h^0(C, \cL(-\xi)) \geq 2$: this is not possible
since $\cL(-\xi)$ has degree zero.
\end{proof}

Proposition~\ref{prop:rank4} means that for every length~$6$ subscheme
of $C$, there exists at most one cubic interpolation.  In what
follows, we will frequently interpret the identification
$\abel_6^{-1}(\{\cO_C\}) = |\cL|$ as the isomorphism
\begin{align}\label{eq:interpretation}
	\xymatrix@R-2pc{ \abel_6^{-1}\left(\{\cO_C\}\right) \ar[r]&
          |\cL|\cong \IP \HH^0(C, \cL) \\ \xi \ar@{|->}[r] &
          \ker(\rest_\xi) }
\end{align}
that sends a length $6$ subscheme admitting a cubic interpolation to
the equation of this uniquely defined cubic.

\begin{remark}
By Proposition~\ref{prop:rank4}, there exists at most one cubic
interpolation for every given length~$6$ subscheme of $C$. Although
this is expected for general subschemes, it is remarkable that it
holds for all of them. The
basic general observation is that a cubic interpolation can never
factor as conic and a ``nonvertical" line (this expression makes sense in the
affine chart $y=1$ with coordinates $(x, z)$, see
Figure~\ref{fig:cubic_interpolation_vertical}), because of
shape of the equation and the fact that the variable $z$ has degree~$3$. If the cubic
contains a ``vertical line" $x=\alpha$, then its equation does not
contain the variable $z$ and it thus factors as a product of three
vertical lines.
\end{remark}

\subsection{The linear system of conics} \label{ss:systemofconics}

Similarly as in \S\ref{ss:lin_syst_cubics}, the linear system of
conics in $\IP_{113}$ is $3$-dimensional:
\[
\HH^0(\IP_{113}, \cO_{\IP_{113}}(2)) = \Span(x^2, xy, y^2).
\]
We put $\cC\coloneqq\iota^\ast \cO_{\IP_{113}}(2)$.  Let $\xi\in\Sym 4
C$. Similarly as in \S\ref{ss:degen}, to study the cubic
interpolations that pass through four points we make use of the restriction morphism
\[
\rest_\xi \colon \IC^5\cong\HH^0(C, \cL)\to \HH^0(\xi, \left.\cL\right|_\xi)\cong \IC^4.
\]
This time, $\ker(\rest_\xi)$ is never zero so there exists at least
one cubic interpolation at~$\xi$, and it is unique if and only if
$h^0(C, \cL(-\xi)) = 1$.

We now characterise those length four
subschemes of $C$ that admit a conic interpolation, and we relate this
to the extent to which the cubic interpolations passing through these
points fail to be unique.

\begin{lemma} \label{lem:equiv_conic}
Let $\xi\coloneqq p_1+ p_2+ p_3+ p_4\in \Sym 4 C$. The following
assertions are equivalent:
\begin{enumerate}
	\item\label{lem:conic_1} $\xi\sim 4\infty$.
	\item\label{lem:conic_2} There exists $K\in |\cC|$ such that $K\cap C = p_1+\cdots+p_4$.
	\item\label{lem:conic_3} Up to permutation of the points, $p_2 = \sigma(p_1)$ and $p_4 = \sigma(p_3)$.
	\item\label{lem:conic_4} $h^0(C, \cL(-\xi))=2$.
\end{enumerate}
\end{lemma}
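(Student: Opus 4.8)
The plan is to prove a single implication cycle $(1) \Rightarrow (2) \Rightarrow (3) \Rightarrow (1)$, together with the separate equivalence $(1) \Leftrightarrow (4)$; since all four statements concern the same degree four divisor $\xi$, this establishes the lemma. Throughout I will use the identification $\cL \cong \cO_C(6\infty)$ from Lemma~\ref{lem:6infinity}, the canonical bundle $\omega_C \cong \cO_C(2\infty)$, and the basic relation $p + \sigma(p) \sim 2\infty$ recorded in Subsection~\ref{ss:jacobian}.

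For $(1) \Leftrightarrow (4)$ I would argue purely by Riemann--Roch on the line bundle $\cL(-\xi) = \cO_C(6\infty - \xi)$, which has degree $6 - 4 = 2$. Then $h^0 - h^1 = 1$, and $h^1(C, \cL(-\xi)) = h^0(C, \omega_C \otimes \cL(-\xi)^{-1})$ is the number of sections of a degree zero line bundle, hence equals $1$ exactly when that bundle is trivial, i.e. when $\cL(-\xi) \cong \omega_C \cong \cO_C(2\infty)$. Since $\cL(-\xi) \cong \cO_C(2\infty)$ translates into $6\infty - \xi \sim 2\infty$, that is $\xi \sim 4\infty$, I obtain $h^0(C, \cL(-\xi)) = 2 \Leftrightarrow \xi \sim 4\infty$.

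For the cycle, $(1) \Rightarrow (2)$ rests on the fact that the conics cut out the \emph{complete} linear system $|4\infty|$ on $C$. Indeed, as in the proof of Lemma~\ref{lem:6infinity}, $\cC = \iota^\ast \cO_{\IP_{113}}(2) \cong \cO_C(4\infty)$, and the restriction map $\HH^0(\IP_{113}, \cO_{\IP_{113}}(2)) = \Span(x^2, xy, y^2) \to \HH^0(C, \cC)$ is an isomorphism of three-dimensional spaces, since it is injective and $h^0(C, \cO_C(4\infty)) = 3$ by Riemann--Roch. Thus any effective $\xi \sim 4\infty$ lies in $|\cC|$ and so is cut out scheme-theoretically by some conic $K$. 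For $(2) \Rightarrow (3)$ I would exploit the special shape of the conic: as $\HH^0(\IP_{113}, \cO_{\IP_{113}}(2))$ involves no $z$, the equation of $K$ is a binary quadratic form $g(x,y)$, which factors as a product $\ell_1 \ell_2$ of linear forms in $x, y$. Each $\{\ell_i = 0\}$ is a \emph{vertical line}, meeting $C$ in a fibre $\pi^{-1}([\mu_i : 1])$ of the form $p + \sigma(p)$; hence $K \cap C = (p + \sigma(p)) + (q + \sigma(q))$, which gives $(3)$ after relabelling. Finally $(3) \Rightarrow (1)$ is immediate, since $p_1 + \sigma(p_1) \sim 2\infty$ and $p_3 + \sigma(p_3) \sim 2\infty$ force $\xi \sim 4\infty$.

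The main obstacle I anticipate is the bookkeeping of multiplicities and degenerate configurations in the step $(2) \Rightarrow (3)$: when $g$ is a perfect square (a double vertical line), or when some $[\mu_i : 1]$ is a branch point of $\pi$, the divisor $p_i + \sigma(p_i)$ degenerates to $2 p_i$ with $p_i$ a Weierstrass point. I would absorb these cases uniformly by reading $\{\ell_i = 0\} \cap C$ as the degree two divisor $\pi^{-1}([\mu_i : 1])$ in all cases, so that $p + \sigma(p) \sim 2\infty$ and the conclusion $(3)$ continue to hold with coincident points allowed. The remaining steps are either the above Riemann--Roch computation or direct consequences of the linear equivalences in Subsection~\ref{ss:jacobian}.
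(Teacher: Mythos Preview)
Your proof is correct. The cycle $(1)\Rightarrow(2)\Rightarrow(3)\Rightarrow(1)$ follows the same reasoning as the paper: conics in $\IP_{113}$ carry no $z$-term and split as two vertical lines, $|\cC|=|4\infty|$ gives the existence of a conic interpolation, and $p+\sigma(p)\sim 2\infty$ closes the loop. Where you differ is in handling condition~(4). The paper argues geometrically: it shows $(2)\Rightarrow(4)$ by observing that any vertical line completes a conic interpolation to a cubic, producing a pencil in $|\cL(-\xi)|$; and it shows $(4)\Rightarrow(2)$ by contrapositive, running through a case analysis on how many of the $x$-coordinates coincide and checking that when at least three are distinct the cubic interpolation is unique. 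Your route, computing $h^0(\cL(-\xi))$ directly via Riemann--Roch and Serre duality on the degree-$2$ bundle $\cL(-\xi)$ and reducing to whether the degree-$0$ bundle $\omega_C\otimes\cL(-\xi)^{-1}\cong\cO_C(\xi-4\infty)$ is trivial, is shorter and avoids any casework. The paper's approach has the virtue of making the pencil of cubics through $\xi$ visible, which matters for the later analysis of the projection $\marc_C\to C^4$ in~\S\ref{ss:galoisrevisited}; yours is cleaner for the bare equivalence.
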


\begin{proof}
Since the equation of a conic $K\in |\cC|$ does not contain the
variable $z$, within the affine chart $y=1$ it consists of two
vertical lines (or one double line). Therefore it cuts $C$ in four
points, which
form two orbits under the action of the involution~$\sigma$, so up to
reordering the points, we conclude that $p_2 = \sigma(p_1)$ and $p_4 =
\sigma(p_3)$. Since the converse is clear, this proves
(\ref{lem:conic_2})$\Leftrightarrow$(\ref{lem:conic_3}). Moreover, any
line of equation $ax+by=0$, with $(a:b)\in \IP^1$, completes this
conic to cubic interpolation that consisting of three
vertical lines. This means that there is a pencil of cubics
intersecting $C$ at $\xi$, so $h^0(C, \cL(-\xi))=2$.  This proves
(\ref{lem:conic_2})$\Rightarrow$(\ref{lem:conic_4}). Since $p +
\sigma(p)\sim 2\infty$ for any $p\in C$ (see~\cite[Chapter 3,
  \S2]{Mumford_TataII}), we
get
\[
p_1 + \cdots+p_4 \sim 4\infty.
\]
Conversely, if four points $p_1, \ldots, p_4$ satisfy the relation
$p_1+\cdots+p_4\sim 4\infty$, arguing similarly as in the proof of
Lemma~\ref{lem:6infinity}, we see that $|\cC| = |4\infty|$ and we
deduce that these four points admit a unique conic interpolation.
This proves (\ref{lem:conic_1})$\Leftrightarrow$(\ref{lem:conic_2}).

It remains to show that
(\ref{lem:conic_4})$\Rightarrow$(\ref{lem:conic_2}).
Suppose that (\ref{lem:conic_2}) does not hold, so at least three of the
$x$-coordinates of the points $p_i$ are distinct.

First assume that the $x$-coordinates of the four points are all
different from one another and from $\infty$, so that we may write
them as $p_i=(x_i:1:z_i)$. There exists a unique cubic interpolation
$g(x, z)$ at these points, defined by the four conditions $g(x_i) =
z_i$, so $h^0(C, \cL(-\xi))=1$.  If instead one of the points is
$\infty$, there is still a cubic interpolation, but it has no
$x^3$-term and it is uniquely determined by the interpolation at the
three remaining points.

If only three of the $x$-coordinates are distinct, we may assume that
$p_2 = \sigma(p_1)$ and that the coordinates $x_1, x_3, x_4$ are
distinct. To construct a cubic interpolation at these four points, we
first need to take the line joining $p_1$ and $p_2$. Then the only way
to interpolate~$C$ at $p_3$~and~$p_4$ with a conic is to take the
lines joining $p_3$~to~$\sigma(p_3)$ and $p_4$~to~$\sigma(p_4)$, so
these points admit a unique cubic interpolation and $h^0(C,
\cL(-\xi))=1$.
\end{proof}

\section{A degree 15 covering of the linear system of cubics}\label{s:covering}

\subsection{Covering maps and Galois closure} \label{ss:galoisclosure}

Inequivalent notions of covering map coexist in the literature. We follow
\cite[\S3]{GKP_etale} and make the definition below.  Note that we
do not require a covering map to be \'etale.

\begin{definition}\label{def:covering}
A \emph{covering map} is a finite surjective morphism $f\colon X\to
X'$ between normal projective varieties. A covering map $f$ is called
\emph{Galois} if there exits a finite group $G\subset \Aut(X)$ such
that $f$ is isomorphic to the quotient map $X\to X/G$.
\end{definition}

Let $f\colon X\to X'$ be a morphism between normal projective
varieties.  The support~$R$ of the sheaf of relative K\"ahler
differentials
\[
\Omega_{X/X'}\coloneqq \coker\left(f^\ast \Omega_{X'} \to \Omega_X\right),
\]
endowed with its structure of a closed subscheme of $X$, is the
\emph{ramification scheme} of~$f$. Its image $B\coloneqq f(R)$,
defined as a closed subscheme of $X'$, is the \emph{branch scheme}
of~$f$.  When $X$ is normal and $X'$ is nonsingular, by the
Zariski--Nagata purity theorem~\cite{Nagata, Zariski}, $R$ and $B$ are
divisors on $X$ and $X'$ respectively (see
also~\cite[Theorem~2.4]{Zong}). The terms \emph{ramification locus}
and \emph{branch locus} refer to the underlying sets of closed points.

\begin{theorem}\cite[Theorem~3.7]{GKP_etale}\label{th:closure}
Let $f\colon X\to X'$ be a covering map between quasi-projective
varieties. There exists a normal, quasi-projective variety $\Hat
X$ and a finite surjective morphism $\hat f\colon \Hat X \to
X$, called the \emph{Galois closure} of $f$, such that
\begin{enumerate}
	\item there exist finite groups $H\subset G$ such that the
          morphisms $F\coloneqq f\circ \hat f$ and~$\hat f$ are
          Galois coverings with respective groups $G$ and $H$.
        \item The branch loci of $F$ and $f$ are equal.
\end{enumerate}
\end{theorem}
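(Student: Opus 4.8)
The plan is to construct $\Hat X$ as the normalisation of $X$ in the Galois closure of the function field extension determined by $f$, and then to read off both assertions from the Galois theory of fields. First I would pass to function fields: since $f$ is a finite surjective morphism of normal varieties over $\IC$, it induces a finite, and necessarily separable, extension $K(X)/K(X')$. Let $L$ denote a Galois closure of $K(X)$ over $K(X')$ inside a fixed algebraic closure, and put $G\coloneqq \mathrm{Gal}(L/K(X'))$ and $H\coloneqq \mathrm{Gal}(L/K(X))\subset G$, so that $L^{G}=K(X')$ and $L^{H}=K(X)$.

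Next I would realise $L$ geometrically. Define $\Hat X$ to be the normalisation of $X$ in $L$; this is a normal integral scheme, finite over $X$, hence a normal quasi-projective variety, and the structure morphism $\hat f\colon \Hat X\to X$ is finite and surjective. The composite $F\coloneqq f\circ\hat f$ is then exactly the normalisation of $X'$ in $L$. By functoriality of normalisation, the action of $G$ on $L$ fixing $K(X')$ induces an action of $G$ on $\Hat X$ by automorphisms over $X'$; since $L^{G}=K(X')$ and $X'$ is normal, the quotient $\Hat X/G$ is the normalisation of $X'$ in $K(X')$, namely $X'$ itself, so $F$ is a Galois covering with group $G$. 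Applying the same reasoning to $H$, using $L^{H}=K(X)$ and normality of $X$, gives $\Hat X/H\cong X$ and shows that $\hat f$ is Galois with group $H$. This yields assertion~(1).

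For assertion~(2) I would argue at the codimension-one points of $X'$, which by purity suffice to detect the branch divisors. Multiplicativity of ramification indices in the tower $K(X')\subset K(X)\subset L$ shows that any prime divisor of $X'$ under which $f$ ramifies also lies under ramification of $F$, whence the branch locus of $f$ is contained in that of $F$. For the reverse inclusion I would use that $L$ is the compositum of the conjugate subfields $\sigma(K(X))$, $\sigma\in G$; each such conjugate is $K(X')$-isomorphic to $K(X)$ and therefore unramified over precisely the same prime divisors as $f$. Since a compositum of extensions that are unramified along a fixed divisor is again unramified there, $F$ is unramified wherever $f$ is, giving the opposite inclusion and hence the equality of branch loci.

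The hard part will be assertion~(2), and within it the reverse inclusion: one must exclude the possibility that passing to the Galois closure creates new branching. The essential input is the purely local algebraic fact that a compositum of unramified extensions of a discrete valuation ring is again unramified, applied after localising $X'$ at each of its prime divisors. Working in characteristic zero is what keeps this clean, since separability and tameness mean ramification is measured entirely by ramification indices, and the Zariski--Nagata purity theorem guarantees that the branch scheme is a divisor, hence determined by its behaviour at codimension-one points.
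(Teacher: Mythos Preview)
The paper does not give its own proof of this theorem: it is quoted verbatim as \cite[Theorem~3.7]{GKP_etale} and used as a black box, so there is no proof in the paper to compare against. That said, your argument is the standard one and is correct: constructing $\Hat X$ as the normalisation of $X'$ (equivalently of $X$) in the Galois closure $L$ of $K(X)/K(X')$, identifying the quotients $\Hat X/G$ and $\Hat X/H$ with $X'$ and $X$ via normality, and then comparing branch divisors by localising at codimension-one points and using that a compositum of unramified extensions of a DVR is unramified. This is essentially the approach of the cited reference as well.
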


\subsection{Organising six points on the curve into three pairs}\label{ss:group_H}

\begin{definition}\label{def:groupH}
We denote by $H$ the subgroup of the symmetric group $\kS_6$
generated by the permutations $(1, 2)$, $(1, 3)(2, 4)$ and $(1, 5)(2,
6)$.
\end{definition}

The group $H$ has order $48$ and hence index $15$ in $\kS_6$, but it is not normal.

\begin{lemma}\label{lem:factorise_chow}
The Chow quotient $\Phi_6\colon C^6\to \Sym 6 C$ factorises through a
finite morphism $\varphi\colon \Sym 3{\Sym 2 C} \to \Sym 6
C$ of degree $15$, as follows:
\[
\Phi_6\colon\xymatrix@R=.5pc{ C^6 \ar[r]^-{\Hat\varphi}& \Sym 3
  {\Sym 2 C}\ar[r]^-{\varphi} & \Sym 6 C\\
  (p_1, \ldots, p_6)\ar@{|->}[r] &
  \left[p_1+p_2\right]+\left[p_3+p_4\right]+\left[p_5+p_6\right]
  \ar@{|->}[r] & p_1+\cdots+p_6.  }
\]
The morphism $\Hat\varphi$ is the Galois closure of $\varphi$.
\end{lemma}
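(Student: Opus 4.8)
The plan is to realise all three maps as quotients by explicit finite groups and then to read off the degree and the Galois closure from elementary group theory. First I would identify the target $\Sym 3 {\Sym 2 C}$ as a quotient of $C^6$. Writing $C^6=(C^2)^3$, the identification $\Sym 2 C=C^2/\kS_2$ gives $(\Sym 2 C)^3=C^6/(\kS_2)^3$, where the three factors $\kS_2$ swap the pairs $\{1,2\},\{3,4\},\{5,6\}$; passing to $\Sym 3 {-}$ quotients further by the $\kS_3$ permuting these three blocks. Thus $\Sym 3 {\Sym 2 C}=C^6/(\kS_2\wr\kS_3)$, where $\kS_2\wr\kS_3\subset\kS_6$ is the block-preserving wreath product. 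A direct check on generators shows this wreath product is exactly the group $H$ of Definition~\ref{def:groupH}: the generator $(1,2)$ swaps within the first block, $(1,3)(2,4)$ and $(1,5)(2,6)$ swap blocks, and conjugating $(1,2)$ by these produces $(3,4)$ and $(5,6)$; since $|H|=48=|\kS_2\wr\kS_3|$, the two groups coincide. Hence $\Hat\varphi\colon C^6\to\Sym 3 {\Sym 2 C}$ is precisely the quotient map by $H$, and because $H\subset\kS_6$, the $\kS_6$-quotient $\Phi_6$ factors as $\Phi_6=\varphi\circ\Hat\varphi$ with $\varphi$ the induced map of orbit spaces, exactly as in the statement.

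For finiteness and degree I would note that $\Sym 6 C$ and $\Sym 2 C$ are smooth (symmetric products of a smooth curve), so $\Sym 3 {\Sym 2 C}$ is normal as a finite quotient of a smooth variety, and all three spaces are projective. The map $\varphi$ is proper (a morphism of projective varieties) and quasi-finite (its fibre over $p_1+\cdots+p_6$ consists of the finitely many groupings of the six points into three pairs), hence finite and surjective; so it is a covering map in the sense of Definition~\ref{def:covering}. Since $\kS_6$ and $H$ act generically freely on $C^6$ (a tuple of six distinct points has trivial stabiliser in $\kS_6$), the coverings $\Phi_6$ and $\Hat\varphi$ have generic degrees $|\kS_6|=720$ and $|H|=48$, and therefore $\deg\varphi=720/48=15$.

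It remains to identify $\Hat\varphi$ with the Galois closure of $\varphi$. The plan here is to pass to function fields: set $K_0=K(\Sym 6 C)=\IC(C^6)^{\kS_6}$ and $K_1=K(\Sym 3 {\Sym 2 C})=\IC(C^6)^H$, so that $\IC(C^6)/K_0$ is Galois with group $\kS_6$ and $K_1$ is the intermediate field fixed by $H$. The field-theoretic Galois closure of $K_1/K_0$ is the fixed field of the normal core $\bigcap_{g\in\kS_6}gHg^{-1}$, the largest normal subgroup of $\kS_6$ contained in $H$. The only normal subgroups of $\kS_6$ are $\{1\}$, the alternating group $\mathrm{A}_6$ and $\kS_6$; since $|H|=48<360=|\mathrm{A}_6|$, this core is trivial, so the Galois closure of $K_1/K_0$ equals $\IC(C^6)$. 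By Theorem~\ref{th:closure}, together with the description of the Galois closure as the normalisation of $\Sym 3 {\Sym 2 C}$ in this field, and since $C^6$ is smooth with $\Hat\varphi$ finite of function field $\IC(C^6)$, that normalisation is $C^6$; thus $\hat f=\Hat\varphi$ and $F=\Phi_6$ is Galois with group $\kS_6$, as required.

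The routine parts (the quotient identifications, finiteness, and the degree count) are formal. The only real content is the triviality of the normal core of $H$ in $\kS_6$, which is immediate once one recalls that $\kS_6$ has no nontrivial normal subgroup of order dividing $48$; the identification of the abstract Galois closure of Theorem~\ref{th:closure} with the concrete variety $C^6$ then follows directly from this function-field computation.
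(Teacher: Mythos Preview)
Your proof is correct and follows essentially the same approach as the paper: both identify $\Hat\varphi$ as the quotient of $C^6$ by the group $H$ (which you correctly recognise as the wreath product $\kS_2\wr\kS_3$), deduce the factorisation from the inclusion $H\subset\kS_6$, and obtain the degree from the index $[\kS_6:H]=15$. Your treatment of the Galois closure is in fact more complete than the paper's: the paper simply asserts that $\Hat\varphi$ is the Galois closure in the sense of Theorem~\ref{th:closure}, whereas you supply the missing verification by computing that the normal core of $H$ in $\kS_6$ is trivial (using that the only proper nontrivial normal subgroup of $\kS_6$ is $\mathrm{A}_6$, of order $360>48$), which is exactly what is needed to conclude that no smaller Galois cover dominates $\varphi$.
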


\begin{proof}
The morphism $\varphi$ is quasi-finite, hence finite by Stein
factorisation since all varieties involved are projective. Its degree
is $15=\frac{1}{3!}\binom{6}{2}\binom{4}{2}$ and it is clearly
surjective. It is also flat since $\Sym 6 C$ is nonsingular and $\Sym
3 {\Sym 2 C}$ is Cohen--Macaulay (see~\cite[18.17]{Eisenbud}).  The
Chow quotient $\Phi_6\colon C^6\to \Sym 6 C$ is a Galois covering with
group the symmetric group $\kS_6$.  It is easy to check that
$\Hat\varphi$ is the quotient of $C^6$ by the group $H$ introduced in
Definition~\ref{def:groupH}. This group permutes the three pairs and
the position of the points in each pair. Since $\Sym 3 {\Sym 2 C}$ is
normal, the morphism $\varphi$ is a covering map in the sense of
Definition~\ref{def:covering} but it is not a Galois covering since
$H$ is a nonnormal subgroup of $\kS_6$. The morphism $\Hat\varphi$ is
the Galois closure of $\varphi$ in the sense of
Theorem~\ref{th:closure}.
\end{proof}

The ramification scheme of $\Phi_6$ is the big diagonal $\cD_6\subset
C^6$, \ie the union of the closed subschemes defined by the equalities
$p_i = p_j$ for $p = (p_1, \ldots, p_6)\in C^6$. It is a reduced and
reducible divisor. The branch scheme $\Phi_6(\cD_6)=\Delta_6\subset
\Sym 6 C$ is the locus of nonreduced subschemes: it is a reduced and
irreducible divisor, and it is clearly the branch scheme of
$\varphi$. We study now the ramification scheme of $\varphi$.

\begin{remark}\label{rem:explicit_fibre}
Let us describe the fibres of $\varphi$ explicitly. For any
point $(p_1, \ldots, p_6)\in C^6$, we write for short $p_{i,
  j}\coloneqq p_i + p_j\in\Sym 2 C$. The fibre of~$\varphi$ over a
generic point $p_1+\cdots+p_6\in \Sym 6 C$ is the following set of
$15$~points in $\Sym 3{ \Sym 2 C}$:
\[
	\begin{array}{ccc}
	p_{1, 2} + p_{3, 4} + p_{5, 6} &
	p_{1, 2} + p_{3, 5} + p_{4, 6} &
	p_{1, 2} + p_{3, 6} + p_{4, 5} \\
	p_{1, 3} + p_{2, 4} + p_{5, 6} &
	p_{1, 3} + p_{2, 5} + p_{4, 6} &
	p_{1, 3} + p_{2, 6} + p_{4, 5} \\
	p_{1, 4} + p_{2, 3} + p_{5, 6} &
	p_{1, 4} + p_{2, 5} + p_{3, 6} &
	p_{1, 4} + p_{2, 6} + p_{3, 5} \\
	p_{1, 5} + p_{2, 3} + p_{4, 6} &
	p_{1, 5} + p_{2, 4} + p_{3, 6} &
	p_{1, 5} + p_{2, 6} + p_{3, 4} \\
	p_{1, 6} + p_{2, 3} + p_{4, 5} &
	p_{1, 6} + p_{2, 4} + p_{3, 5} &
	p_{1, 6} + p_{2, 5} + p_{3, 4}
	\end{array}
\]
Over a generic point of $\Delta_6$, say for instance when $p_1 = p_2$,
this fibre contains only nine closed points:
\begin{align}\label{eq:nine}
\begin{array}{ccc}
	p_{1, 1} + p_{3, 4} + p_{5, 6} &
	p_{1, 1} + p_{3, 5} + p_{4, 6} &
	p_{1, 1} + p_{3, 6} + p_{4, 5} \\
	p_{1, 3} + p_{1, 4} + p_{5, 6} &
	p_{1, 3} + p_{1, 5} + p_{4, 6} &
	p_{1, 3} + p_{1, 6} + p_{4, 5} \\
	p_{1, 4} + p_{1, 5} + p_{3, 6} &
        p_{1, 4} + p_{1, 6} + p_{3, 5} &
	p_{1, 5} + p_{1, 6} + p_{3, 4} \\
\end{array}
\end{align}
In the scheme-theoretic fibres, the first three points, on the top
line of the list~\eqref{eq:nine}, are nonreduced subschemes with a
length two subscheme supported at $p_1$.
The remaining six points are
(generically) reduced subschemes obtained when the point~$p_1$ is in
the support of two different summands $p_{1, i}$ and $p_{1, j}$ at the same time.
\end{remark}

From Remark~\ref{rem:explicit_fibre} we deduce that a generic fibre of
$\varphi$ consists of nine distinct points that belong to two distinct
divisors $R_1$ and $R_2$, defined below.  We will see shortly that
only $R_2$ is indeed a ramification divisor.  Consider first the Chow
quotient:
\[
q\colon \Sym 2 C \times \Sym 2 C \times \Sym 2 C \to \Sym 3{\Sym 2 C}
\]
and define
\begin{align}\label{div:R1}
R_1 \coloneqq q\left(\Delta_2 \times \Sym 2 C \times \Sym 2 C \right).
\end{align}
It is an irreducible divisor parametrising unordered triples of
effective degree~$2$ divisors on $C$ such that at least one of them is
nonreduced.  Over a generic point of~$\Delta_6$, three points of the
fibre of $\varphi$ belong to $R_1$ (for instance those on the top line
of~\eqref{eq:nine}).

Let us now introduce the \emph{double incidence} subvariety:
\[
\Xi\coloneqq\{(x, \xi_1, \xi_2)\in C\times \Sym 2 C\times \Sym 2 C\mid x\in \Supp(\xi_1)\cap\Supp(\xi_2)\},
\]
where $\Supp(\xi)$ is the set-theoretic support of the subscheme
$\xi$.  The locus $\Xi$ is a codimension~$2$ irreducible subvariety of
the product variety. Denote the projection by
\[
\pi\colon C\times \Sym 2 C\times \Sym 2 C\to \Sym 2 C\times \Sym 2
C,\quad (x, \xi_1, \xi_2)\mapsto (\xi_1, \xi_2)
\]
and define
\begin{align}\label{div:R2}
R_2\coloneqq q\left(\pi(\Xi)\times \Sym 2 C\right).
\end{align}
Over a generic point of $\Delta_6$, six points of the fibre of
$\varphi$ belong to $R_2$ (for instance those not on the top line
of~\eqref{eq:nine}).

\begin{lemma}\label{lem:ram_phi}
We have $\varphi^\ast \Delta_6 = R_1 + 2 R_2$.  The ramification
scheme of $\varphi$ is the reduced and irreducible divisor~$R_2$.
\end{lemma}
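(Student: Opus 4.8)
The plan is to identify the components of $\varphi^{-1}(\Delta_6)$ set-theoretically, and then to determine the two multiplicities together with the ramification by means of two explicit local models, one near a generic point of $R_1$ and one near a generic point of $R_2$. Remark~\ref{rem:explicit_fibre} already shows that over a generic point $2p + p_3+p_4+p_5+p_6$ of $\Delta_6$ (one double point, the rest a distinct support) the nine points of the fibre split into the three points on the top line of~\eqref{eq:nine}, which lie on $R_1$, and the six remaining points, which lie on $R_2$. Since $\Delta_6$ is irreducible and $\varphi$ is finite, every component of $\varphi^{-1}(\Delta_6)$ dominates $\Delta_6$, so $R_1$ and $R_2$ are the only components and $\varphi^\ast\Delta_6 = aR_1 + bR_2$ for integers $a,b\geq 1$ that I must pin down.

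For the multiplicity $a$ and the behaviour along $R_1$ I would take a generic point $\xi = [2p]+[r+s]+[u+v]$ of $R_1$, with $p,r,s,u,v$ pairwise distinct. The three pairs $[2p], [r+s], [u+v]$ are then pairwise distinct points of $\Sym 2 C$, so the Chow quotient $q$ is \'etale at $\xi$ and $\Sym 3 {\Sym 2 C}$ is smooth there; among them only $[2p]$ lies on the diagonal $\Delta_2$, while $[r+s]$ and $[u+v]$ are off it. Locally $\varphi$ therefore decomposes as the identity on the $\Sym 2 C$ factor carrying $[2p]$ times the identity on the four distinct points of the other two pairs, so $\varphi$ is an \emph{isomorphism} near $\xi$. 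Consequently $\varphi$ is unramified at the generic point of $R_1$, and under this local isomorphism $\Delta_6$ corresponds to the diagonal $\Delta_2$ of the $\Sym 2 C$ factor, i.e.\ to $R_1$ with multiplicity one; hence $a = 1$.

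For the multiplicity $b$ and the ramification along $R_2$ I would take a generic point $\xi = [x+u]+[x+v]+[w+z]$ of $R_2$, with $x,u,v,w,z$ pairwise distinct, so its image is again the generic type $2x + u+v+w+z$ of $\Delta_6$. Here each pair is off the diagonal and the three pairs are distinct, so $\Sym 3 {\Sym 2 C}$ is smooth at $\xi$; the only collision in the image is between the copy of $x$ coming from $[x+u]$ and the copy coming from $[x+v]$. Writing $x_1, x_2$ for the two coordinates near $x$, the nontrivial part of $\varphi$ is exactly the quotient $\Phi_2\colon (x_1,x_2)\mapsto (x_1+x_2,\, x_1 x_2)$, the remaining coordinates being carried isomorphically. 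Thus $\varphi$ is, up to a smooth factor, the map $\Phi_2$ near $\xi$; it ramifies along $R_2 = \{x_1 = x_2\}$ with index two, and $\Phi_2^\ast\Delta_2 = 2\{x_1 = x_2\}$ gives $b = 2$, whence $\varphi^\ast\Delta_6 = R_1 + 2R_2$. The same local model computes $\Omega_{X/X'}$: for $\Phi_2$ the relative differentials are the cokernel of the Jacobian $\left(\begin{smallmatrix}1 & 1\\ x_2 & x_1\end{smallmatrix}\right)$, whose determinant is $x_1 - x_2$, so the sheaf is cyclic with annihilator $(x_1-x_2)$ and its support scheme is the reduced divisor $R_2$. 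Since $\varphi$ is unramified along $R_1$, the ramification scheme equals $R_2$, which is reduced, and irreducible.

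As an independent check of the multiplicities I would pull back along $\Hat\varphi = q\circ\Phi_2^{\times 3}$, using $\Phi_6 = \varphi\circ\Hat\varphi$ and $\Phi_6^\ast\Delta_6 = 2\cD_6$ (the $\kS_6$-stabiliser of a generic point of the big diagonal is a transposition). Writing $D_{ij} = \{p_i = p_j\}$ for the components of $\cD_6$, one finds $\Hat\varphi^\ast R_1 = 2(D_{12}+D_{34}+D_{56})$ and $\Hat\varphi^\ast R_2 = \sum D_{ij}$ over the twelve ``cross'' pairs, and matching $\Hat\varphi^\ast(aR_1+bR_2) = 2\cD_6$ again forces $a=1$, $b=2$. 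The main obstacle here is conceptual rather than computational: one must explain why $R_1$ appears in $\varphi^\ast\Delta_6$ and yet contributes nothing to the ramification, and the resolution is precisely that $\varphi$ is a local isomorphism near $R_1$, whereas the collapse producing the branch divisor occurs, in $\Phi_2$-fashion, only along $R_2$.
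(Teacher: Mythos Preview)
Your proof is correct and follows essentially the same strategy as the paper: use the description of the generic fibre over $\Delta_6$ in Remark~\ref{rem:explicit_fibre} to isolate $R_1$ and $R_2$, and then analyse $\varphi$ locally near a generic point of each. The only organisational difference is that the paper first imposes the numerical constraint $3e_1+6e_2=15$ coming from $\deg\varphi=15$ (reducing to the two possibilities $(e_1,e_2)=(1,2)$ or $(3,1)$) and then eliminates the second by a limit argument, whereas you compute both local degrees directly via the explicit local models (identity near $R_1$, the $\Phi_2$-quotient near $R_2$) and use the degree count only as a consistency check. Your Jacobian computation for $\Omega_{\varphi}$ makes the reducedness of the ramification scheme more explicit than the paper's reference to~\cite[Lemma~I.16.1]{BHPV}, and your pullback verification through $\Hat\varphi$ is a nice independent confirmation that the paper does not include.
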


\begin{proof}
The ramification divisor of $\varphi$ decomposes as a sum of
irreducible components with multiplicity as $r_1 R_1 + r_2
R_2$. Denote by $e_i$, $i=1, 2$ the local degrees (or branching
orders) of $\varphi$ at generic points of $R_i$, so that $\varphi^\ast
\Delta_6 = e_1 R_1 + e_2 R_2$. By symmetry, these degrees do not
depend on the choice of one of the three (for $i=1$), respectively six
(for $i=2$) preimage points in~$R_i$.  Since $\varphi$ is generically
$15:1$, we have $3 e_1 + 6 e_2 = 15$, and this gives two
possibilities: either $(e_1, e_2) = (1, 2)$ or $(e_1, e_2) = (3, 1)$.

Let us exclude the second possibility. We use the notation of
Remark~\ref{rem:explicit_fibre}. It is enough to consider one point in
$R_2$, say for instance $p_{1, 3} + p_{1, 4} + p_{5, 6}$. This point
is obtained as the limit point of the two reduced subschemes $p_{1, 3}
+ p_{2, 4} + p_{5, 6}$ or $p_{2, 3} + p_{1, 4} + p_{5, 6}$ when $p_2$
goes to $p_1$, so the local degree is two. In comparison, a point
$p_{1, 1} + p_{3, 4} + p_{5, 6}$ in $R_1$ is the limit of $p_{1, 2} +
p_{3, 4} + p_{5, 6}$ when $p_2$ goes to~$p_1$, but there is only one
limit direction since $C$ is a smooth curve, so there is no
ramification there. It follows that $e_1 = 1$ and $e_2 = 2$. We
conclude using~\cite[Lemma~I.16.1]{BHPV} that the ramification divisor
is~$R_2$.
\end{proof}

Let us rephrase the upshot of the above proof: the preimage by
$\varphi$ of the branch divisor $\Delta_6$ is the union of the
divisors $R_1$ and $R_2$, but the preimage points in $R_1$ are not
ramification points, whereas the premiage points in $R_2$ are
ramification points with branching order $2$. The ramification divisor
is thus the reduced divisor $R_2$.

\subsection{The degree 15 covering}\label{ss:def_G}

Recall that we denote by $\greg_C\coloneqq \greg_{\Jac C}$ the variety
$\Symo 3 {\Sym 2 C}$ introduced in Definition~\ref{def:greg}, using
the identification $\Tilde{\Jac C} \cong \Sym 2 C$ explained
in~\eqref{eq:abel_jacobi2}.

\begin{proposition}\label{prop:multiple_4plane}
The morphism $\varphi\colon \Sym 3 {\Sym 2 C} \to \Sym 6 C$ restricts
to a degree $15$ finite morphism $\psi\colon \greg_C\to |\cL|\cong
\IP^4$. The variety $\greg_C$ is normal, geometrically
Cohen--Macaulay, $\IQ$-factorial and Gorenstein, with quotient
singularities.
\end{proposition}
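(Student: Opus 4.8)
The plan is to realise $\psi$ as a base change of $\varphi$, to read off all the structural properties except Gorensteinness directly from Proposition~\ref{prop:GA_normal}, and then to upgrade the $\IQ$-Gorenstein statement to Gorensteinness by a descent argument for the canonical sheaf along the quotient by $\kS_3$. First I would check the identity $\abel_6\circ\varphi=\bar\alpha_C\circ\Sym 3{\bl_C}$ of morphisms $\Sym 3{\Sym 2 C}\to\Jac C$: writing $\bl_C=\abel_2$ as in~\eqref{eq:abel_jacobi2}, both maps send $[p_1+p_2]+[p_3+p_4]+[p_5+p_6]$ to the class $\cO_C(p_1+\dots+p_6-6\infty)$. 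Taking scheme-theoretic fibres over $\cO_C$ and using the identification $|\cL|=\abel_6^{-1}(\{\cO_C\})$ from Lemma~\ref{lem:6infinity}, this yields $\greg_C=(\bar\alpha_C\circ\Sym 3{\bl_C})^{-1}(\cO_C)=\varphi^{-1}(|\cL|)$ as schemes. Since $\abel_6$ is a $\IP^4$-bundle (Mattuck's theorem), its fibre $|\cL|$ is a reduced smooth subvariety isomorphic to $\IP^4$, so $\psi\coloneqq\varphi|_{\greg_C}$ is the base change of $\varphi$ along $|\cL|\inj\Sym 6 C$. As $\varphi$ is finite and flat of degree $15$ by Lemma~\ref{lem:factorise_chow}, so is $\psi$.

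For the structural properties I would simply invoke Proposition~\ref{prop:GA_normal} with $A=\Jac C$: it already gives that $\greg_C$ is a normal, Cohen--Macaulay (hence geometrically Cohen--Macaulay, since $\IC$ is algebraically closed), $\IQ$-factorial projective fourfold with quotient singularities. Thus only the Gorenstein property is genuinely new, and since $\greg_C$ is Cohen--Macaulay it is enough to prove that its dualizing sheaf $\omega_{\greg_C}$ is invertible.

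Here I would exploit the cover more heavily. Let $\mu\colon(\Sym 2 C)^3\to\Sym 6 C$, $(\xi_1,\xi_2,\xi_3)\mapsto\xi_1+\xi_2+\xi_3$, so that $\mu=\varphi\circ q$ with $q$ the quotient by $\kS_3$, and $\greg_C=\Tilde A^3_0/\kS_3$ with $\Tilde A^3_0=q^{-1}(\greg_C)=\mu^{-1}(|\cL|)$ (recall $A=\Jac C$ and $\Tilde A=\Sym 2 C$). The source $(\Sym 2 C)^3$ and the target $\Sym 6 C$ are smooth of dimension $6$ and $\mu$ is finite, hence flat by miracle flatness. As $|\cL|=\abel_6^{-1}(\{\cO_C\})$ is the fibre of the smooth morphism $\abel_6$, it is a regular embedding of codimension $2$ in $\Sym 6 C$; pulling back along the flat map $\mu$, the subvariety $\Tilde A^3_0\inj(\Sym 2 C)^3$ is again a regular embedding of codimension $2$, lci, with normal bundle $N$ the pullback of $N_{|\cL|/\Sym 6 C}$. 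Adjunction then gives
\[
\omega_{\Tilde A^3_0}\cong\omega_{(\Sym 2 C)^3}|_{\Tilde A^3_0}\otimes\det N .
\]
I would now descend this line bundle along $q$. The map $\mu$ is symmetric, so $\kS_3$ acts trivially on $\Sym 6 C$, hence on $|\cL|$ and on the two normal directions (the tangent space of $\Jac C$ at the origin), so trivially on $\det N$; and on $\omega_{(\Sym 2 C)^3}=\boxtimes_{i=1}^3\pr_i^\ast\omega_{\Sym 2 C}$ a transposition interchanges two factors each carrying a $2$-form, which since $2$-forms commute ($(-1)^{2\cdot 2}=+1$) fixes the canonical generator at every fixed point. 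Because the fixed loci of $\kS_3$ on $\Tilde A^3_0$ (where two of the $\xi_i$ coincide) have codimension $2$, the map $q$ is étale in codimension $1$ and has no quasi-reflections, so $\omega_{\greg_C}=(q_\ast\omega_{\Tilde A^3_0})^{\kS_3}$ is a line bundle and $\greg_C$ is Gorenstein.

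The hard part is precisely this last step: verifying that every stabiliser subgroup acts trivially on the fibre of $\omega_{\Tilde A^3_0}$ at its fixed points (the sign computation on $\omega_{(\Sym 2 C)^3}$ together with the triviality of the $\kS_3$-action along the base and normal directions), and confirming that $q$ carries no quasi-reflection. This is exactly what promotes the $\IQ$-Cartier canonical class of Proposition~\ref{prop:GA_normal} to a genuinely Cartier one, and hence the $\IQ$-Gorenstein statement to Gorensteinness.
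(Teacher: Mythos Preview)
Your argument for the first part---identifying $\greg_C$ as the fibre $\varphi^{-1}(|\cL|)$ via the equality $\abel_6\circ\varphi=\bar\alpha_C\circ\Sym 3{\bl_C}$, and then reading off normality, Cohen--Macaulayness, $\IQ$-factoriality and quotient singularities from Proposition~\ref{prop:GA_normal}---matches the paper exactly.

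For Gorensteinness you take a genuinely different route. The paper argues upstairs on the ambient space: $\Sym 3{\Sym 2 C}$ is Gorenstein (as the $\kS_3$-quotient of the smooth sixfold $(\Sym 2 C)^3$ acting without quasi-reflections, citing a result of Ito), hence the finite morphism $\varphi$ to the smooth $\Sym 6 C$ is a Gorenstein morphism, and Gorenstein morphisms are stable under base change, so the fibre $\greg_C$ is Gorenstein. You instead work directly on the fibre: $\Tilde A^3_0$ is lci (flat pullback of the regular embedding $|\cL|\inj\Sym 6 C$), hence Gorenstein, and you descend $\omega_{\Tilde A^3_0}$ along $q$ after checking that stabilisers act trivially on its fibres. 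Both arguments hinge on the same $\mathrm{SL}$ computation (a transposition swaps two $2$-dimensional blocks, determinant $(-1)^2=+1$), but the paper's packaging via base change of a Gorenstein morphism is more economical and sidesteps any descent subtlety.

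One point in your argument deserves care: the implication ``stabilisers act trivially on the fibres of $\omega_{\Tilde A^3_0}$, hence $(q_\ast\omega_{\Tilde A^3_0})^{\kS_3}$ is invertible'' is standard Kempf descent when the source is smooth, but $\Tilde A^3_0$ is singular along the small diagonal of $E_C^3$, which meets the $\kS_3$-fixed locus. The clean fix is already implicit in your adjunction formula: $\omega_{\Tilde A^3_0}$ is the restriction of the $\kS_3$-equivariant line bundle $\omega_{(\Sym 2 C)^3}\otimes\det N$ on the \emph{smooth} ambient $(\Sym 2 C)^3$, where descent gives an honest line bundle $M$ on $\Sym 3{\Sym 2 C}$; then $\omega_{\greg_C}$ and $M|_{\greg_C}$ are reflexive and agree on the \'etale locus, hence coincide. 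Once you say this, your proof is complete---and you have essentially reproduced the paper's statement that $\Sym 3{\Sym 2 C}$ is Gorenstein.
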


\begin{proof}
The isomorphism \eqref{eq:interpretation} can be formulated
equivalently as a closed embedding $|\cL|\inj \Sym 6 C$ sending a
cubic $D$ to the subscheme $D\cap C$ considered as a formal sum of
points, where the multiplicity of $D\cap C$ at a point $p$ is the
length of the artinian ring $\cO_{p, D\cap C}$. It is easy to check
that the morphism $\abel_6\circ \varphi$ factorises by $\Sym 3
{\bl_C}$, that is:
\[
	\xymatrix{ \Sym 3 {\Sym 2 C}\ar[r]^-{\varphi}\ar[d]_{\Sym 3
            {\bl_C}} & \Sym 6 C\ar[r]^{\abel_6} & \Jac C\\ \Sym 3
          {\Jac C}\ar[rru]_{\bar\alpha_C} }
\]

Since $|\cL| = |6\infty| \cong \abel_6^{-1}({\cO_C})$, this
shows that $\greg_C$ is the fibre of $\varphi$ over $|\cL|$. We denote
by $\psi$ the restriction of $\varphi$ to $\greg_C$.  Since a generic
cubic interpolation cuts $C$ in $6$ points, the
generic fibre of $\psi$ is reduced, so the morphism
$\psi\colon \greg_C\to |\cL| = \IP^4$ is finite of degree $15$.

The surface $\Sym 2 C$ is nonsingular, so the symmetric quotient $\Sym
3 {\Sym 2 C}$ has rational singularities. Since the group $\kS_3$ acts
on it without quasi-reflections, the quotient is Gorenstein (see for
instance~\cite{Ito} and references therein).
We know by
Proposition~\ref{prop:GA_normal} that $\greg_C$ is normal,
Cohen--Macaulay, $\IQ$-factorial with quotient singularities. Since
$\Sym 6 C$ is nonsingular, the finite morphism $\varphi$ is
Gorenstein, meaning that its relative dualising sheaf is locally
free. Since its formation commutes with base change, the fibre
$\greg_C$ over $|\cL|$ is Gorenstein (see for
instance~\cite[\S1]{CE}).
\end{proof}

\begin{definition}\label{def:marc}
We denote by $\marc_C$ the scheme-theoretic fibre over the origin of
the morphism:
\[
C^6\xrightarrow{\Phi_6} \Sym 6 C \xrightarrow{\abel_6} \Jac C,
\]
that is, $\marc_C\coloneqq \left(\abel_6\circ
\Phi_6\right)^{-1}\left(\{\cO_C\}\right)$.
\end{definition}

Let us summarise the situation in the following diagram, where, as we
proved above, we have $\greg_C=
\left(\abel_6\circ\varphi\right)^{-1}\left(\{\cO_C\}\right)$:
\begin{align}\label{diag:galois_closures}
\xymatrix{\marc_C\ar@{^(->}[r]\ar[d]_{\Hat\psi}\ar@/_2pc/[dd]_\Psi & C^6\ar[d]^{\Hat\varphi}\ar@/^4pc/[dd]^{\Phi_6} \\
	\greg_C \ar@{^(->}[r] \ar[d]_\psi& \Sym 3{\Sym 2 C}\ar[d]^\varphi \\ \IP^4 = |\cL|\ar@{^(->}[r]\ar[d] & \Sym 6 C\ar[d]^{\abel_6}\\
	\{\cO_C\} \ar@{^(->}[r] & \Jac C}
\end{align}

\begin{proposition}\label{prop:marc}\text{}
\begin{enumerate}
	\item\label{prop:marc_item1} The scheme $\marc_C$ is a local
          complete intersection. It is a normal variety of
          dimension~$4$, Cohen--Macaulay and Gorenstein.
	\item\label{prop:marc_item2} The morphism
          $\Hat\psi\colon \marc_C\to \greg_C$ is the Galois
          closure of $\psi$ and $\Hat\psi$ is the quotient map
          by the group $H$, that is $\greg_C = \marc_C/H$.
	\item\label{prop:marc_item3} The morphism $\Psi\colon
          \marc_C\to |\cL|$ is syntomic.
\end{enumerate}
\end{proposition}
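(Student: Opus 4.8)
The plan is to realise $\marc_C$ as a fibre of a single flat morphism and to read off every assertion from the local geometry of the Abel--Jacobi map, recovering the Galois-theoretic statements from the combinatorics of the subgroup $H\subset\kS_6$ of Definition~\ref{def:groupH}. For part~(\ref{prop:marc_item1}) I would set $g\coloneqq\abel_6\circ\Phi_6\colon C^6\to\Jac C$, so that $\marc_C=g^{-1}(\cO_C)$ by Definition~\ref{def:marc}. The Chow quotient $\Phi_6$ is finite with smooth source $C^6$ and nonsingular target $\Sym 6 C$, hence flat by miracle flatness, while $\abel_6$ is a $\IP^4$-bundle by Mattuck's theorem (recalled in the proof of Lemma~\ref{lem:6infinity}), hence flat; so $g$ is flat and all of its fibres, $\marc_C$ in particular, are pure of dimension $6-2=4$. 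Choosing a regular system of parameters $t_1,t_2$ at the origin of the smooth surface $\Jac C$, the subscheme $\marc_C$ is cut out in the smooth sixfold $C^6$ by $g^\ast t_1=g^\ast t_2=0$; since it has codimension $2$ it is a complete intersection, hence a local complete intersection, Cohen--Macaulay and Gorenstein.

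There remains the normality. As $\marc_C$ is Cohen--Macaulay it satisfies $\SerreS{2}$, so by Serre's criterion I only need $\SerreR{1}$ together with connectedness. Because $g$ is a map of smooth varieties, $\marc_C$ is smooth at $p=(p_1,\dots,p_6)$ exactly when $\mathrm{d}g_p=\sum_i\mathrm{d}(\abel_1)_{p_i}\colon\bigoplus_i T_{p_i}C\to T_{\cO_C}\Jac C$ is surjective. The image of the $i$-th summand is the tangent line to the Abel--Jacobi curve, which under the identification of $\IP(T_{\cO_C}\Jac C)$ with the target $\IP^1$ of the canonical map is the point $\pi(p_i)$; hence $\mathrm{d}g_p$ drops rank precisely when all the $\pi(p_i)$ agree, i.e.\ when all $p_i$ lie in one fibre of the hyperelliptic map $\pi$. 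Imposing in addition $p_1+\cdots+p_6\sim 6\infty$, I expect this singular locus to be a finite union of curves (the free parameter being $p\in C$ in the balanced case $3p+3\sigma(p)\sim6\infty$, the remaining configurations being torsion and hence finite), so of codimension at least $3$; this gives $\SerreR{1}$ and normality as a scheme. For connectedness I would restrict to the open locus of reduced divisors, over which $\marc_C$ is the étale $\kS_6$-cover of $|\cL|$ that orders the six points of a cubic section; by the uniform position principle the monodromy of the very ample system $|\cL|$ is the full symmetric group $\kS_6$, so this cover is connected and $\marc_C$ is irreducible, completing part~(\ref{prop:marc_item1}).

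For part~(\ref{prop:marc_item2}), since $\Phi_6=\varphi\circ\Hat\varphi$ and $\greg_C=(\abel_6\circ\varphi)^{-1}(\cO_C)$ I have $\marc_C=\Hat\varphi^{-1}(\greg_C)$, and $\kS_6$ acts on $\marc_C$ because $g$ is $\kS_6$-invariant. The generic fibre of $\Psi\colon\marc_C\to|\cL|$ is the single $\kS_6$-orbit formed by the $720$ orderings of the six points of a generic cubic section, so the finite birational map $\marc_C/\kS_6\to|\cL|\cong\IP^4$ onto the normal target $\IP^4$ is an isomorphism and $\Psi$ is Galois with group $\kS_6$. The identical argument with $H$, whose generic fibre in $\marc_C\to\greg_C$ is a single $H$-orbit of size $48=|H|$ and whose target $\greg_C$ is normal by part~(\ref{prop:marc_item1}), shows that $\Hat\psi$ is the quotient by $H$, that is $\greg_C=\marc_C/H$. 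Finally I would identify the Galois closure with the normal core of $H$ in $\kS_6$: the kernel of the $\kS_6$-action on the $15$ cosets of $H$ is a normal subgroup, hence $\{1\}$, $A_6$ or $\kS_6$; it cannot contain $A_6$ because $|H|=48<360$ and it is not $\kS_6$ because $H\neq\kS_6$, so the core is trivial. Hence $\IC(\marc_C)$ is the Galois closure of $\IC(\greg_C)/\IC(|\cL|)$, and as $\marc_C$ is normal and finite over $\greg_C$ it is the Galois closure of $\psi$ in the sense of Theorem~\ref{th:closure}, the coincidence of branch loci being part of that theorem.

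For part~(\ref{prop:marc_item3}) I would note that $\Psi=\psi\circ\Hat\psi$ is finite by part~(\ref{prop:marc_item2}), with Cohen--Macaulay source and regular target $\IP^4$ of the same dimension $4$, hence flat by miracle flatness; and since $\marc_C$ is a local complete intersection over $\IC$ while $\IP^4$ is smooth over $\IC$, the cancellation property for local complete intersection morphisms makes $\Psi$ a local complete intersection morphism. A flat local complete intersection morphism is syntomic. The main obstacle I anticipate is the normality in part~(\ref{prop:marc_item1}): both checking that the non-submersive locus of $g$ really has codimension at least $3$ in $\marc_C$, via the differential computation above, and, above all, proving connectedness, for which one must know that the monodromy of the linear system $|\cL|$ is the full symmetric group.
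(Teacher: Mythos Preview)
Your argument is correct and takes a genuinely different route from the paper's. For the local complete intersection property, the paper realises $\marc_C$ as a determinantal locus via the rank condition $\rank(\rest_{\Phi_6(p)})\le 4$ and argues that locally two $5\times 5$ minors suffice; your approach via flatness of $g=\abel_6\circ\Phi_6$ is cleaner and gets lci directly as a codimension-two fibre of a flat map between smooth varieties. For regularity in codimension one, the paper first establishes $\SerreR{0}$ by an explicit Jacobian computation with Vandermonde minors, then upgrades to $\SerreR{1}$ by showing that the branch scheme is reduced and invoking the fact that singular points of a simple cover of a smooth base lie over singular points of the reduced branch divisor. Your differential argument is more intrinsic and in fact yields $\SerreR{2}$: the locus where $\mathrm{d}g_p$ drops rank is exactly where all six points share a hyperelliptic fibre, and inside $\marc_C$ this is one-dimensional (your sketch is right: the balanced configurations $3p+3\sigma(p)$ give $\binom{6}{3}$ copies of $C$, while the unbalanced ones impose a torsion condition on $2(p-\infty)$ and are finite). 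For connectedness the paper uses the Koszul resolution of $\cO_{\marc_C}$ in $C^6$ and a spectral sequence to show $h^0(\marc_C,\cO_{\marc_C})=1$; your monodromy argument via the uniform position principle for the very ample embedding $C\hookrightarrow(\IP^4)^\vee$ is equally valid. Your normal-core computation for $H\subset\kS_6$ in part~(\ref{prop:marc_item2}) makes explicit what the paper leaves to Lemma~\ref{lem:factorise_chow}.

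One small slip: in part~(\ref{prop:marc_item2}) you cite part~(\ref{prop:marc_item1}) for the normality of $\greg_C$, but part~(\ref{prop:marc_item1}) concerns $\marc_C$; the normality of $\greg_C$ is Proposition~\ref{prop:GA_normal} (or Proposition~\ref{prop:multiple_4plane}).
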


We recall that  \emph{syntomic} means a flat local complete intersection morphism of locally finite presentation, see~ \cite[\href{https://stacks.math.columbia.edu/tag/01UB}{Definition~29.30.1}]{stacks-project}.

\begin{proof}
For any $p\coloneqq (p_1, \ldots, p_6)\in C^6$, we consider the
restriction morphism $\rest_{\Phi_6(p)}$ introduced above:
\[
\rest_{\Phi_6(p)}\colon \IC^5\cong \HH^0(C, \cL)\to \HH^0(\Phi_6(p), \left.\cL\right|_{\Phi_6(p)})\cong \IC^6.
\]
By definition, the closed points of $\marc_C$ are those sextuples of
points on $C$ that are interpolated (with multiplicity if necessary)
by a cubic. The finite surjective morphism $\Psi\colon \marc_C\to
|\cL|$ maps any $p\in \marc_C(\IC)$ to $\ker(\rest_{\Phi_6(p)})$,
using the isomorphism~\eqref{eq:interpretation}. It follows that
$\marc_C$ is equidimensional of dimension $4$. By
Proposition~\ref{prop:rank4}, the cubic interpolation is always unique
whenever it exists since the locus of points $p\in C^6$ such that
$\rank(\rest_{\phi_6(p)})<4$ is empty, so we have:
\begin{align*}
	\marc_C&= \{p\in C^6\mid \exists s\in \HH^0(C, \cL),\, \cZ(s)
        = \Phi_6(p)\}\\ &= \{p\in C^6\mid \rank(\rest_{\Phi_6(p)})
        \leq 4\},
\end{align*}
where $\cZ(s)$ means the zero scheme of $s$ (we refer to~\cite{AK} for
the definition of this scheme structure).

The local equations of $\marc_C$ at a point $p$ are thus the six
$5\times 5$ minors of any $6\times 5$ matrix $R$ associated to
$\rest_{\Phi_6(p)}$. At least one $4\times 4$ minor, say the
determinant $|R'|$ of the $4\times 4$ submatrix $R'$, is
nonzero at $p$ since the matrix has rank $4$.

The determinants of the two $5\times 5$ submatrices containing~$R'$
vanish at~$p$ and by basic linear algebra, this forces the vanishing
at~$p$ of all the $5\times 5$ minors. So $\marc_C$ is locally given by
two equations in the nonsingular variety $C^6$, hence it is a local
complete intersection. It thus satisfies Serre's condition~$\SerreS k$ for
any~$k\geq 1$.

Using these local equations, we show that $\marc_C$ is regular in
codimension zero (Serre's condition $\SerreR 0$). Since a generic
cubic cuts $C$ in six points whose images under the double
covering $\pi\colon C \to \IP^1$ are distinct,
each
irreducible component of $\marc_C$ contains a dense open subset of
points $p=(p_1, \ldots, p_6)\in \marc_C$ such that the $x$-coordinates
of all $p_i$ are distinct.  Let us show that $\marc_C$ is nonsingular
at these points. Without loss of generality, we may assume that none
of these points is $\infty$, so we denote their coordinates by $p_i =
[x_i:1:z_i]$ with $x_i$ distinct. The matrix of $\rest_{\Phi_6(p)}$ is
the $6\times 5$ matrix
\[
R:=	\begin{pmatrix}
		x_1^3 & x_1^2  & x_1 & 1 & z_1\\
		\vdots & \vdots  & \vdots & \vdots & \vdots \\
		x_6^3 & x_6^2  & x_6  & 1 & z_6
	\end{pmatrix}.
\]
We may take $R'$ to be the top left $4\times 4$ submatrix, so that $|R'|$ is the
Vandermonde determinant $V(x_1,x_2,x_3,x_4)$, which is nonzero because
the $x_i$ are distinct. Hence, as discussed above, the two local
equations of $\marc_C$ at $p$ are
\[
M_5\coloneqq\left|
	\begin{matrix}
		x_1^3 & x_1^2  & x_1 & 1 & z_1\\
		\vdots & \vdots  & \vdots & \vdots & \vdots \\
		x_4^3 & x_4^2  & x_4  & 1 & z_4\\
		x_6^3 & x_6^2  & x_6  & 1 & z_6
	\end{matrix}\right| = 0
\text{ and }
M_6\coloneqq\left|
        \begin{matrix}
	x_1^3 & x_1^2  & x_1 & 1 & z_1\\
	\vdots & \vdots  & \vdots & \vdots & \vdots \\
	x_4^3 & x_4^2  & x_4  & 1 & z_4\\
	x_5^3 & x_5^2  & x_5  & 1 & z_5
        \end{matrix}\right| = 0.
\]
These satisfy
\[
\frac{\partial M_5}{\partial z_5} = \frac{\partial
  M_6}{\partial z_6} = 0, \qquad \frac{\partial M_5}{\partial z_6}=
\frac{\partial M_6}{\partial z_5}  = |R'|\neq 0,
\]
so the Jacobian matrix of $(M_5, M_6)$ at $p$ has rank $2$. This
shows that $\marc_C$ is nonsingular at $p$.  Since $\marc_C$ is $\SerreR 0$
and $\SerreS 1$, it is reduced.

The same argument as in Lemma~\ref{lem:ram_phi} (the computation of
the local degrees) shows that the ramification scheme $R_2\cap
\marc_C$ of $\psi$ is reduced, so the branch scheme $B\coloneqq
\psi(R_2\cap \marc_C)$ is reduced (see
\cite[\href{https://stacks.math.columbia.edu/tag/056B}{Lemma~29.6.7}]{stacks-project}). Locally
over $\IP^4$, the variety~$\marc_C$ is given by a polynomial equation
of the form
\[
	P(x, y_1, \ldots, y_4) = x^{15} + \sum_{i=0}^{14} a_i(y_1,
        \ldots, y_4) x^i = 0,
\]
where $a_i$ are regular functions on affine charts of $\PP^4$ with
coordinates $(y_1, \ldots, y_4)$. The branch scheme $B$ is
the vanishing locus of the discriminant $D(y_1, \ldots, y_4)$ of the
polynomial $P$. Since $B$ is reduced, its singularities are given by
the equations
\[
	D(y_1, \ldots, y_4) = 0, \quad \frac{\partial D}{\partial
          y_i}(y_1, \ldots, y_4) = 0, \quad\forall i=1, \ldots, 4.
\]
A point with local coordinates $(x, y_1, \ldots, y_4)$ is a singular
point of $\marc_C$ if
\begin{align*}
	P(x, y_1, \ldots, y_4) = 0, \quad \frac{\partial P}{\partial
          x}(x, y_1, \ldots, y_4) = 0 \\ \text{ and } \frac{\partial
          P}{\partial y_i}(x, y_1, \ldots, y_4) = 0, \quad\forall i=1,
        \ldots, 4.
\end{align*}
An explicit computation shows that these conditions imply that $(y_1,
\ldots, y_4)$ is a singular point of $B$,
see~\cite[Theorem~4.2]{Sharipov}. This means that $\marc_C$ is
regular in codimension one (condition~$\SerreR 1$). Since it is $\SerreS 2$, it is
normal by Serre's criterion~\cite[Proposition~II.8.23]{Hartshorne}.

Since $\marc_C$ is a local intersection scheme, it admits a Koszul
complex providing a locally free finite resolution
$K_\bullet\to\cO_{\marc_C}$ of its structure sheaf considered as a
$\cO_{C^6}$-module. A standard argument (see for instance~\cite{AK})
produces a spectral sequence
\[
	E_1^{p, q} = \HH^q(C^6, K_p)\Rightarrow H^{p+q}(\marc_C, \cO_{\marc_C}),
\]
from which we deduce that $\HH^0(\marc_C, \cO_{\marc_C})$ is a
quotient of $\HH^0(C^6, \cO_{C^6})$, which is 1-dimensional since
$C$ is connected. It follows that $\marc_C$ is connected.  Since
$\marc_C$ is normal and connected, using Zariski's Main Theorem we
deduce that it is irreducible. Finally, $\marc_C/\kS_6 \cong
\IP^4$. As $\marc_C$ is a local complete intersection scheme, it is
Cohen--Macaulay and Gorenstein~\cite[Corollary 21.19]{Eisenbud}. This
proves assertion~(\ref{prop:marc_item1}).

The variety $\greg_C$ is normal by
Proposition~\ref{prop:multiple_4plane} and it follows from
Lemma~\ref{lem:factorise_chow} and
Diagram~\eqref{diag:galois_closures} that the morphism
$\hat\psi\colon\marc_C\to\greg_C$ is the Galois closure of $\psi$,
that is, $\greg_C = \marc_C/H$; this proves
assertion~(\ref{prop:marc_item2}).  The morphism
$\Psi\colon\marc_C\to\IP^4$ is a finite, hence flat, morphism from a
normal local complete intersection variety to a regular variety,
by ``magic flatness'' (see~\cite[18.17]{Eisenbud}). It follows that
$\Psi$ is also a local complete intersection morphism, hence syntomic
(see \cite[\href{https://stacks.math.columbia.edu/tag/068E}{Lemma
    37.62.8 and Lemma 37.62.12}]{stacks-project}); this proves
assertion~(\ref{prop:marc_item3}).
\end{proof}

\subsection{The rational contraction, revisited} \label{ss:revisit}

We know from Propositions~\ref{prop:birational_F1} and
\ref{prop:birational_F2} that the birational map $\gamma_C\colon
\greg_C\dashrightarrow \Kum 2 {\Jac C}$ contracts the divisor $F_1$
defined in Equation~\eqref{def:F1} to the noncurvilinear point $3
\cO_{C}$ and contracts $F_2$ (see Equation~\eqref{def:F2}) to the
Kummer surface $\Kum 1 {\Jac C}$ naturally embedded in $\Kum 2 {\Jac
  C}$. In this setup, these two divisors have very nice and concrete
descriptions since they parametrise the possible configurations of
triples of pairs of points that are interpolated by cubics consisting
of three ``vertical'' lines (see
Figure~\ref{fig:cubic_interpolation_vertical}).
\begin{enumerate}
	\item \textit{The comb}. Recall that the exceptional divisor
          $E_C\subset \Sym 2 C$ consists of the $0$-cycles of the form
          $p+\sigma(p)$ for $p\in C$, so
\[
	F_1=\{[p_1+\sigma(p_1))+ [p_2+\sigma(p_2)] + [p_3 + \sigma(p_3)]\mid p_1, p_2, p_3\in C\}.
\]
	\item \textit{The cross}. Similarly
\[
	F_2= \{[p_1+\sigma(p_2)] + [p_2+\sigma(p_1)] + [p_3 + \sigma(p_3))]\mid p_1, p_2, p_3\in C\}.
\]
Given $p_1, p_2\in C$, consider the curve:
\[
	\ell_{p_1, p_2}\coloneqq\{[p_1+\sigma(p_2)] + [p_2+\sigma(p_1)] + [p_3 + \sigma(p_3)]\mid p_3\in C\}.
\]
Clearly $\ell_{p_1, p_2}\cong E_C$ is a rational curve and the divisor
$F_2$ is ruled by these rational curves.
\end{enumerate}

\begin{figure}[!h]
	\begin{center}
		\includegraphics[width = 6cm, height = 6cm]{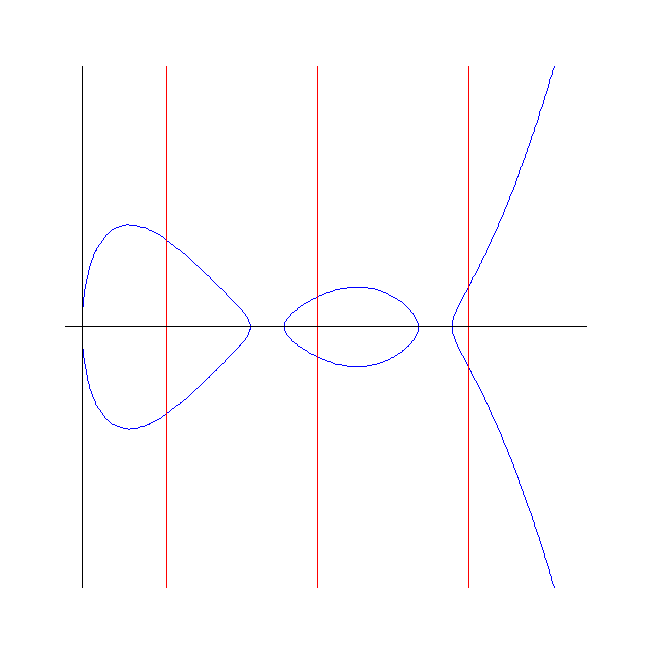}
		\caption{A genus $2$ curve (in blue) and a cubic
                  interpolation (in red) intersecting consisting in
                  three vertical lines intersecting in $6$ points on
                  the affine chart $y=1$ of coordinates $x$ (abscissa)
                  and $z$ (ordinate).}
		\label{fig:cubic_interpolation_vertical}
	\end{center}
\end{figure}

Consider the birational inverse $\gamma_C^{-1}\colon \Kum {2} {\Jac C}
\dashrightarrow \greg_C$. A general point of the exceptional divisor
$\Xi$ of the Hilbert--Chow morphism $h_C^\circ$ is a nonreduced
subscheme $\xi$ of length~$3$, consisting of a length $2$ subscheme
supported at $L_1\in \Jac C$ and a reduced point $L_2\in\Jac C$.
Taking them general enough, these points are represented in reduced
forms as $L_1 = \cO_C(p_1 + p_2 - 2\infty)$ and $L_2=\cO_C(p_3 + p_4 -
2\infty)$. The condition of defining a point in the generalised Kummer
fourfold of $\Jac C$ is that $L_1^{\otimes 2}\otimes L_2\cong \cO_C$:
that is, $2p_1 + 2p_2 + p_3 + p_4 \sim 6 \infty$. This means that the
cubic interpolation at these four points is bitangent to $C$.

This shows that the rational map $\gamma_C^{-1}$ sends the general
element of $\Xi$ to the big diagonal $\Delta$ of $\Symo 3 {\Sym 2 C}$,
consisting of nonreduced $0$-cycles: in the case above $\xi$~is mapped
to $2[p_1+ p_2] + [p_3 + p_4]$, and so this is the limit point when a
general nonreduced subcheme, defining a cubic interpolation whose
image under $\pi$ consists six different points, goes to $\xi$ (the
point $\xi$ is not sent, say to $[p_1 + p_3] + [p_1 + p_4] + 2p_2$).

The conclusion of this analysis is that $\gamma_C^{-1}$ contracts the
exceptional divisor $\Xi$ to the codimension two locus $\Delta$ in
$\greg_C$.

\subsection{The Galois closure, revisited} \label{ss:galoisrevisited}

Consider the projection to the first four factors $\pr_1\colon C^6 =
C^4\times C^2\to C^4$ and its restriction to $\marc_C$, still denoted
$\pr_1\colon \marc_C\to C^4$.  The quotient $C^4\times C^2\to
C^4\times \Sym 2 C$, restricted to $\marc_C$, induces a double
covering $\marc_C\to \overline \marc_C$, which is the Stein
factorisation of the projection map
\[
pr_1\colon \marc_C\xrightarrow{2:1}\overline{\marc_C}\to C^4.
\]
To see this, observe that any four points $(p_1, \ldots, p_4)\in C^4$ admit
a cubic interpolation, as we
observed in Section~\ref{ss:systemofconics}, so there exist $p_5, p_6$ such that
$p_1+\cdots+p_6\sim 6\infty$. The projection $\pr_1\colon \marc_C\to
C^4$ is thus surjective.  If $p_1+\cdots+p_4\not\sim 4\infty$, then by
Lemma~\ref{lem:equiv_conic} these four points are interpolated by a
unique cubic so the remaining intersection points $p_5, p_6$ with $C$
are uniquely determined. This shows that $(p_1, \ldots, p_4)\in C^4$
has a unique preimage in $\overline{\marc_C}$. If $p_1+\cdots+p_4\sim
4\infty$, by Lemma~\ref{lem:equiv_conic} these four points admit a
conic interpolation and there is a pencil of cubic interpolations
obtained by adding a line joining a point $p\in C$ to the point
$\sigma(p)$. The fibre of $(p_1, \ldots, p_4)\in C^4$ in $\overline
\marc_C$ is thus $C/\latt{\sigma} = \IP^1$.

Consider the composition of the Chow quotient with the Abel--Jacobi
map:
\[
\abel_4\circ \Phi_4\colon C^4 \to \Sym 4 C \to \Jac C
\]
and define the closed subscheme $W\subset C^4$ parametrising
quatruples of points that admit a conic interpolation:
\[
	W\coloneqq (\abel_4\circ \Phi_4)^{-1}\left(\{\cO_C\}\right)=\left\{p\in C^4\mid \Phi_4(p) \sim 4\infty\right\}.
\]
The locus $W$ is exactly where the fibre of the morphism $\overline \marc_C\to C^4$
is $\IP^1$. In fact, it is is simply the blowup of $W$.

\begin{proposition} \label{prop:marc_bar_is_blow}
There is an isomorphism $\overline \marc_C\cong \Bl_W C^4$.
\end{proposition}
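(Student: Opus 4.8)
The plan is to identify $\overline{\marc_C}$ with a fibre product over $\Jac C$ and then to invoke flat base change for blow-ups. By construction $\overline{\marc_C}$ is the image of $\marc_C$ under the map $C^6 = C^4\times C^2\to C^4\times \Sym 2 C$, so its closed points are the pairs $\left((p_1,\ldots,p_4),\xi\right)$ with $\xi = p_5+p_6\in \Sym 2 C$ and $p_1+\cdots+p_6\sim 6\infty$. The last relation says $\xi\sim 6\infty-(p_1+\cdots+p_4)$, equivalently $\abel_2(\xi)=\cO_C(\xi-2\infty)=\cO_C(4\infty-p_1-\cdots-p_4)$. Introducing the morphism
\[
\mu\colon C^4\to \Jac C,\qquad (p_1,\ldots,p_4)\mapsto \cO_C(4\infty-p_1-\cdots-p_4),
\]
which is $\abel_4\circ\Phi_4$ composed with the sign involution $[-1]$ of $\Jac C$, the membership relation becomes $\abel_2(\xi)=\mu(p_1,\ldots,p_4)$. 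Hence the image $\overline{\marc_C}$ agrees set-theoretically with the fibre product $P\coloneqq C^4\times_{\Jac C,\,\mu,\,\abel_2}\Sym 2 C$, realised as the closed subscheme of $C^4\times \Sym 2 C$ cut out by the condition $\abel_2(\xi)=\mu(p_1,\ldots,p_4)$. Since $\marc_C$ is reduced and, as will follow below, $P$ is integral, the scheme-theoretic image $\overline{\marc_C}$ coincides with $P$.

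Next I would use that $\abel_2\colon \Sym 2 C\to \Jac C$ is exactly the blow-up $\bl_C$ of the origin $\cO_C$, with exceptional divisor $E_C$, as recorded in~\eqref{eq:abel_jacobi2}. The key technical point is that $\mu$ is flat. As $[-1]$ is an automorphism of $\Jac C$, flatness of $\mu$ is equivalent to that of $\abel_4\circ\Phi_4\colon C^4\to \Jac C$. Here $C^4$ is smooth, hence Cohen--Macaulay, while $\Jac C$ is regular of dimension $2$, so by miracle flatness~\cite[18.17]{Eisenbud} it suffices to check that every fibre is $2$-dimensional. This is where Mattuck's theorem (recalled in the proof of Lemma~\ref{lem:6infinity}) enters: since $4>2g-2$, every fibre of the divisor class map $\Sym 4 C\to \Pic 4 C$ is a complete linear system isomorphic to $\IP^2$; as $\Phi_4$ is finite, each fibre of $\abel_4\circ\Phi_4$ is finite over such a $\IP^2$ and therefore $2$-dimensional. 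Thus $\mu$ is flat, and the scheme-theoretic fibre satisfies $\mu^{-1}(\cO_C)=(\abel_4\circ\Phi_4)^{-1}(\{\cO_C\})=W$.

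Finally, flatness lets me conclude. Since blow-ups commute with flat base change (see~\cite[\href{https://stacks.math.columbia.edu/tag/0805}{Tag~0805}]{stacks-project}),
\[
P=C^4\times_{\Jac C,\,\mu,\,\abel_2}\Bl_{\cO_C}(\Jac C)\cong \Bl_{\mu^{-1}(\cO_C)}(C^4)=\Bl_W(C^4).
\]
Being the blow-up of the integral variety $C^4$ along a proper closed subscheme, $P$ is integral; this is what justifies, a posteriori, the identification $\overline{\marc_C}=P$ made in the first paragraph. Combining the two displays yields $\overline{\marc_C}\cong \Bl_W C^4$. The main obstacle is the flatness of $\mu$, that is, the equidimensionality of the fibres of $\abel_4\circ\Phi_4$ (supplied by Mattuck), together with the bookkeeping of scheme structures needed to pass from the a priori only set-theoretic image to the on-the-nose isomorphism; once the fibre-product description is in hand, the rest is formal.
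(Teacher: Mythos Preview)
Your argument is correct and takes a genuinely different route from the paper's.  The paper works entirely inside $C^4\times\Sym 2 C$: it checks that the inverse image ideal $\pi_1^{-1}\cI_W\cdot\cO_{\overline{\marc_C}}$ is invertible (identifying it with $\pi_2^\ast\cO_{\Sym 2 C}(-E_C)$), invokes the universal property of the blow-up to produce a morphism $h\colon\overline{\marc_C}\to\Bl_W C^4$, verifies by hand that $h$ is birational and bijective, and finishes with Zariski's Main Theorem using the normality of $\overline{\marc_C}$ inherited from $\marc_C$.

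Your approach replaces this geometric verification by a single structural observation: recognise $\overline{\marc_C}$ as the fibre product $C^4\times_{\Jac C}\Sym 2 C$, note that $\abel_2$ \emph{is} the blow-up of the origin, and prove that $\mu$ is flat (Mattuck plus miracle flatness) so that blow-up commutes with base change.  This is cleaner and avoids both the explicit invertibility check and the appeal to ZMT; the price is that you must establish flatness of $\mu$, which the paper never needs.  The paper's approach, on the other hand, makes the exceptional locus explicit (it \emph{is} $\pi_2^{-1}(E_C)$), and uses only local information about $W$.  Your bookkeeping on scheme structures---$\overline{\marc_C}$ and $P$ are both reduced closed subschemes of $C^4\times\Sym 2 C$ with the same support, hence equal---is fine; note that $\overline{\marc_C}$ is actually defined in the paper as the quotient $\marc_C/\langle(56)\rangle$, not literally a scheme-theoretic image, but these coincide since both are reduced.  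One small caveat: Stacks Tag~\texttt{0805} may not be precisely the compatibility of blow-up with flat base change; the fact itself is standard (immediate from the Proj description of blow-up and flatness of pullback of ideal powers), but you might double-check the reference.
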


\begin{proof}
A similar argument as in \S\ref{ss:degen} and
Proposition~\ref{prop:rank4} shows that
\[
W = \{p\in C^4\mid \rank\rest_{\Phi_4(\xi)}\leq 2\}.
\]
As in the proof of Proposition~\ref{prop:marc}, it follows that $W$ is
locally given by two equations in $C^4$. Consider the two projections
from $\overline \marc_C$:
\[
\xymatrix{ & \marc_C \ar[d]^{2:1}\ar[ddl]_{\pr_1} \ar[ddr]^{\pr_2} & \\
	&\overline \marc_C \ar[dl]^{\pi_1}\ar[dr]_{\pi_2}& \\
C^4 && \Sym 2 C}
\]
Recall from \eqref{eq:aj_exc} that $\Sym 2 C$ contains the exceptional
divisor $E_C= \{p+\sigma(p)\mid p\in C\}$. The inverse ideal sheaf
$\pi_1^{-1}\cI_W\cdot\cO_{\overline \marc_C}$ defines the locus of
those points $(p_1, \ldots, p_4, p_5+p_6)\in C^4\times\Sym 2 C$ such
that $\{p_1,\ldots, p_4\}$ admits a conic interpolation and $\{p_1,
\ldots, p_6\}$ admits a cubic one. As observed above, the only
possibility is that $p_6 = \sigma(p_5)$: that is, $p_5+p_6\in
E_C$. This means that $\pi_1^{-1}\cI_W\cdot\cO_{\overline
  \marc_C}\cong \pi_2^\ast \cI_{E_C}\cong \pi_2^\ast\cO_{\Sym 2
  {C}}(-E_C)$ is an invertible sheaf. By the universal property of
blowup, there exists a morphism $h\colon\overline \marc_C\to \Bl_W
C^4$ factoring $\pi_1$ through the blowup morphism:
\[
\xymatrix{ & \Bl_W C^4\ar[d]\\ \overline \marc_C\ar[ur]^h\ar[r]^{\pi_1} & C^4}
\]
Concretely, every point $p=(p_1, \ldots, p_4, p_5+p_6)\in \overline
\marc_C$ such that $\pi_2(p)\in W$ encodes the equation of a vertical
line $ux-vy = 0$ intersecting $C$ at $p_5$ and $p_6$. The morphism $h$
locally maps $p$ to $(\pi_2(p), (u:v))\in C^4\times \IP^1$. The
inverse morphism is clear. The morphism~$h$ is thus birational and
bijective, and $\overline \marc_C$ is normal since $\marc_C$ is normal by
Proposition~\ref{prop:marc}, so by Zariski's Main Theorem, $h$ is an
isomorphism.
\end{proof}

\begin{remark}
The quotient $\marc_C\to\overline \marc_C$ is the quotient by the
involution $(56)\in H$, so the quotient morphism $\Hat\psi\colon
\marc_C\to \greg_C$ factorises as:
\[
\xymatrix{\marc_C \ar[d]^{\Hat\psi}\ar[r]^-{2:1}& \marc_C/{(56)}=\overline \marc_C \ar[dl] \\
\greg_C = \marc_C/H}
\]
The morphism $\overline \marc_C\to \greg_C$ is not a Galois covering
since $\latt{(56)}$ is not normal in $H$.
\end{remark}

\begin{remark}
The morphism $W\to \IP^2 = |\cC|$ that sends four points admitting a
conic interpolation to the equation of this conic is $24:1$ and it is
ramified when the conic is tangent to~$C$ at one of its Weierstrass
points. At each such point~$w$, the conic interpolation is given by
the tangent line to $C$ at $w$ and a second vertical line intersecting
$C$ at some points $q$ and $\sigma(q)$. The branch locus in $\IP^2$ is
thus a sextic defined by six rational curves, each of them
corresponding to the tangent line to $C$ at a Weierstrass point.
\end{remark}

\section{The branch locus of the covering of the linear system of cubics}\label{s:branch}

\subsection{Computation of the branch locus}\label{ss:branch_locus}

We focus on the $15:1$ covering map $\psi\colon \greg_C\to |\cL|$. By
the Zariski--Nagata purity theorem, the branch scheme $B\coloneqq
\Delta_6\cap |\cL|$ is a divisor in $|\cL| \cong \IP^4$. It
parametrises those cubics that intersect the curve $C$ with at least
one multiple point. Said differently, identifying $|\cL|$ with $\IP
H^0(C, \cL)$ we have
\[
B = \{[s]\in |\cL|\mid \cZ(s)\text { is nonreduced}\},
\]
where $\cZ(s)\subset C$ is the zero scheme of the section $s$ of $\cL$.

\begin{proposition}\label{prop:branch}
The linear system of cubics embeds $C$ in $(\IP^4)^\vee$ and the
branch locus $B\subset \IP^4$ of $\psi$ is the dual variety of $C$, which
is a reduced and irreducible hypersurface of degree $14$.
\end{proposition}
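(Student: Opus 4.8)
The plan is to prove the three assertions in turn. For the embedding, I would invoke Lemma~\ref{lem:6infinity}: the bundle $\cL\cong\omega_C^{\otimes 3}$ has degree $6>2g=4$, so $|\cL|$ is very ample and defines a closed embedding. As $|\cL|=\IP\HH^0(C,\cL)\cong\IP^4$ is the space of effective cubic divisors, the embedding sends $p\in C$ to the hyperplane $\{D\in|\cL|\mid p\in D\}$ of divisors through $p$, which is a point of the dual projective space; this is the announced embedding $C\inj(\IP^4)^\vee$, and it is nondegenerate because the linear system is complete.

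Next I would identify $B$ with the dual variety. A cubic $D=[s]\in|\cL|=\IP^4$, viewed as a point of $\bigl((\IP^4)^\vee\bigr)^\vee$, corresponds to the hyperplane $H_D\subset(\IP^4)^\vee$ cut out by $s$, and under the embedding above one checks that $H_D\cap C=\cZ(s)=D\cap C$. The hyperplane $H_D$ is tangent to $C$ precisely when $\cZ(s)$ fails to be reduced, so the dual variety $C^\ast$, being the locus of tangent hyperplanes, equals $\{[s]\mid\cZ(s)\text{ is nonreduced}\}=B$, the branch locus described before the statement. That $C^\ast$ is an irreducible hypersurface follows from the conormal variety of the smooth curve $C$ being an irreducible $\IP^2$-bundle of dimension~$3$ over $C$ whose projection to $C^\ast$ is generically finite in characteristic zero. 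Reducedness of $B$ is already in hand from the proof of Proposition~\ref{prop:marc}, where the ramification divisor $R_2\cap\marc_C$ was shown to be reduced; hence $B=C^\ast$ as reduced subvarieties of $\IP^4$.

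Finally, to compute the degree I would cut $C^\ast$ with a general line $\ell\subset\IP^4=|\cL|$, which is a pencil of cubics whose base is a general codimension-two linear subspace $\Lambda\subset(\IP^4)^\vee$. As $\dim C+\dim\Lambda=1+2<4$, a general $\Lambda$ avoids $C$, so projection from $\Lambda$ defines a finite morphism $\rho\colon C\to\ell\cong\IP^1$ of degree $\deg C=6$. A member of the pencil lies on $C^\ast$ exactly when the corresponding hyperplane is tangent to $C$, that is exactly when $\rho$ ramifies over it, so the points of $\ell\cap C^\ast$ are the branch points of $\rho$; Riemann--Hurwitz gives $2g-2=6\cdot(-2)+R$, whence $R=2g-2+12=14$. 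The hard part is precisely this last step: one must ensure that a general centre $\Lambda$ produces only simple ramification and that distinct branch points give distinct tangent hyperplanes, so that the Riemann--Hurwitz count $R=14$ coincides with the reduced intersection number $\deg C^\ast$. This is the standard genericity argument underlying the classical class formula $\deg C^\ast=2d+2g-2$ for a smooth curve of degree $d$ and genus $g$, here specialising to $d=6$, $g=2$.
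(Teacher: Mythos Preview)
Your proof is correct and follows the same overall structure as the paper: very ampleness gives the embedding, the tangent-hyperplane interpretation identifies $B$ with $C^\ast$, reducedness is inherited from Proposition~\ref{prop:marc}, and irreducibility comes from the conormal variety.

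The only genuine difference is the degree computation. The paper simply invokes the general class formula from Tevelev, $\deg C^\ast = \int_C c_1(\Omega_C) + 2\int_C c_1(\cL) = 2 + 12 = 14$, whereas you derive the same number by hand via Riemann--Hurwitz on a generic pencil. Your route is more elementary and self-contained, at the cost of having to justify the genericity step (simple ramification, distinct branch points) that you flag as the ``hard part''; the paper's citation hides exactly this work inside the reference. It is worth noting that the paper also offers, in the remark following the proposition, a third computation via an explicit discriminant of a specific pencil --- so your pencil argument sits naturally between the paper's abstract formula and its concrete resultant calculation.
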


\begin{proof}
Since the line bundle $\cL$ on $C$ is very ample, it defines an
embedding $C\inj |\cL|^\vee\cong (\IP^4)^\vee$ in the dual
projective space. The \emph{conormal variety}~\cite{Kleiman_tangency}
of $C$ for this embedding is
\[
	V_C \coloneqq \{(p, D)\mid T_p C\subset D\}\subset |\cL|^\vee\times |\cL|.
\]
The hyperplanes $D$ in the projective space $|\cL|$ are the cubics in
$\IP_{113}$ and it is easy to check that the condition $T_p C\subset
D$ means that the cubic defined by $D$ is tangent to the curve $C$ at
the point $p$, so in the definition of the branch locus $B$, the
cubics that intersect the curve $C$ with at least one multiple point
correspond here to the hyperplane sections of $|\cL|^\vee$ that are
tangent to the embedding of $C$ in $|\cL|^\vee$. By definition, the
\emph{dual variety} $C^\ast$ of $C$ is the projection of $V_C$ to
$|\cL|$, so with respect to this embedding the branch locus $B$ is the
\emph{dual variety} $C^\ast\subset \IP^4$ of~$C$. It is thus reduced
(we already observed this in the proof of
Proposition~\ref{prop:marc}). Its irreducibility is proved in
~\cite[p.7]{Tevelev}. Its degree can be computed using the general
formula~\cite[Theorem~6.2(i)]{Tevelev}, which reduces in our case to
\[
	\deg B = \int_C c_1(T_C^\vee) + 2 \int_C \cL.
\]
Here $T_C^\vee = K_C$ is a divisor of degree $2$, and $\int_C \cL = 6$, so
$\deg B = 14$ (see also~\cite[Example~10.3]{Tevelev}).
\end{proof}

\begin{remark}
It may be instructive to compute the degree of the branch divisor~$B$
with elementary tools. To do so, we compute the number of intersection
points of $B$ with the pencil of cubics $D_{[a: b]}$ with equation $a
x^3 - b z = 0$, which is the number of points $[a:b]\in \IP^1$ such
that $D_{[a: b]}$ intersects the curve~$C$ with at least one multiple
point. Clearly $[a:b] = [1, 0]$ is a solution that counts with
multiplicity~$4$ since the cubic $x^3 = 0$ cuts $C$ at two points of
multiplicity $3$ (so each point counts twice). For the other solutions
we may put $b = 1$. We may also restrict to the chart $y=1$,
avoiding the point $\infty=[1:0:0]\in C$, because $\infty\in
D_{[0:1]}$ and $D_{[0:1]}\cap C$ consists of the six Weierstrass
points, which are distinct.

Substituting $y=1$ and $z=ax^3$ into the equation \eqref{eq:curve} of
$C$, we find that the points $[x:1:ax^3]$ of $C\cap
D_{[a:1]}$ satisfy an equation of the form $P(x)=0$, where
\[
	P\coloneqq a^2 x^6 - x^5 + \veps_1 x^4 + \cdots + \veps_4 x + \veps_5.
\]
We need the number of values of $a$ such that this polynomial has
repeated roots. This is given by the degree of the discriminant of $P$
as a polynomial in $a$.  The resultant of $P$ and $P'$ is the
following $11\times 11$ determinant:
\[
	\renewcommand*{\arraystretch}{0.3}
	\begin{vmatrix}
		\veps_5 & \veps_4 & \veps_3 & \veps_2 & \veps_1 & -1 & a^2 &&&&\\
		&\ddots &&&&&&\ddots&&&\\
		&&\ddots &&&&&&\ddots&&\\
		&&&\ddots &&&&&&\ddots&\\
		&&&& \veps_5 & \veps_4 & \veps_3 & \veps_2 & \veps_1 & -1 & a^2\\[3ex]
		\veps_4 & 2\veps_3 & 3\veps_2 & 4\veps_1 & -5 & 6a^2 \\
		&\ddots &&&&&\ddots&&&&\\
		&&\ddots &&&&&\ddots&&&\\
		&&&\ddots &&&&&\ddots&&\\
		&&&&\ddots &&&&&\ddots\\
		&&&&&  \veps_4 & 2\veps_3 & 3\veps_2 & 4\veps_1 & -5 & 6a^2
	\end{vmatrix}
\]
This determinant has degree $12$ in $a$, so the discriminant of $P$
has degree $10$. In total we have $14$ intersection points, so the
hypersurface $B$ has degree $14$.
\end{remark}

\begin{corollary}\label{cor:main1}
Let $C$ be a smooth genus two curve. The linear
system of cubics embeds $C$ in $(\IP^4)^\vee$ and the dual variety
$C^\ast\subset \IP^4$ of $C$ is a degree~$14$ irreducible
hypersurface. The second generalised Kummer variety $\Kum 2 {\Jac C}$
of the Jacobian of $C$ is birational to a degree $15$ covering
of~$\IP^4$ branched along~$C^\ast$.
\end{corollary}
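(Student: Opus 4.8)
The plan is to assemble the corollary from the propositions established above, since each of its three assertions has already been proved in a slightly different guise. First I would dispose of the two statements about $C^\ast$: the claim that the linear system of cubics embeds $C$ in $(\IP^4)^\vee$, and that the dual variety $C^\ast\subset\IP^4$ is a reduced irreducible hypersurface of degree $14$, is precisely Proposition~\ref{prop:branch}, so these require nothing further.

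For the final assertion I would take the degree $15$ covering to be the variety $\greg_C=\Symo 3 {\Sym 2 C}$ of Definition~\ref{def:greg}. By Proposition~\ref{prop:multiple_4plane} the restriction $\psi\colon\greg_C\to|\cL|\cong\IP^4$ of $\varphi$ is a finite surjective morphism of degree $15$ between normal projective varieties, hence a covering map in the sense of Definition~\ref{def:covering}; and by Proposition~\ref{prop:branch} its branch locus is exactly $C^\ast$. This identifies $\greg_C$ with the degree $15$ covering of $\IP^4$ branched along $C^\ast$.

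It then remains only to link $\greg_C$ to the generalised Kummer fourfold. Specialising Proposition~\ref{prop:GA_normal} to $A=\Jac C$, the map $\gamma_C\colon\greg_C\dashrightarrow\Kum 2 {\Jac C}$ of Formula~\eqref{eq:gamma} is birational. Combining this with the previous step yields the desired birational equivalence between $\Kum 2 {\Jac C}$ and the degree $15$ covering of $\IP^4$ branched along $C^\ast$, completing the proof.

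I do not expect a genuine obstacle here, as the corollary is purely a synthesis of Propositions~\ref{prop:branch}, \ref{prop:multiple_4plane} and~\ref{prop:GA_normal}. The only point deserving a moment's care is the bookkeeping of the normality hypotheses needed to certify that the finite surjection $\psi$ qualifies as a covering map; this is immediate, since $\greg_C$ is normal by Proposition~\ref{prop:multiple_4plane} while the target $\IP^4$ is smooth.
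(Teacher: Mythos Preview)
Your proposal is correct and matches the paper's own argument exactly: the corollary is stated there as ``simply a summary of Propositions~\ref{prop:GA_normal}, \ref{prop:multiple_4plane} and~\ref{prop:branch}'', which is precisely the synthesis you carry out. Your extra care in checking that $\psi$ is a covering map in the sense of Definition~\ref{def:covering} is a welcome clarification but not a departure from the paper's approach.
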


This corollary is simply a summary of
Propositions~\ref{prop:GA_normal}, \ref{prop:multiple_4plane} and
\ref{prop:branch}.

\begin{proposition}\label{prop:branch_discrim}
Let $z^2 = f(x)$ be the equation of the curve $C$ in the chart $y=1$
of~$\IP_{113}$, as in Equation~\eqref{eq:curve}.  The branch locus $B$
in $\IP^4$ of coordinates $(\alpha_0:\cdots:\alpha_4)$ has equation:
\[
\frac{1}{\alpha_4^6}\Discr_x\left(\alpha_4^2 f(x) - \left(\alpha_0 x^3
+ \alpha_1 x^2 + \alpha_2 x + \alpha_3\right)^2\right).
\]
\end{proposition}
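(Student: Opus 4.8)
The plan is to work in the affine chart $y=1$ of $\IP_{113}$, where the curve $C$ becomes $z^2=f(x)$ with $f$ a quintic (one Weierstrass point, namely $\infty=[1:0:0]$, having moved to infinity) and a cubic $D\in|\cL|$ of coordinates $(\alpha_0:\cdots:\alpha_4)$ becomes $\alpha_4 z+P(x)=0$ with $P(x)\coloneqq \alpha_0 x^3+\alpha_1 x^2+\alpha_2 x+\alpha_3$. On the dense open set $U\coloneqq\{\alpha_0\neq 0,\ \alpha_4\neq 0\}$ the cubic avoids $\infty$, so $D\cap C$ lies entirely in this chart; eliminating $z$ by substituting $\alpha_4 z=-P(x)$ into $z^2=f(x)$ shows that the $x$-coordinates of the points of $D\cap C$ are exactly the roots of
\[
g(x)\coloneqq \alpha_4^2 f(x)-P(x)^2,
\]
a polynomial of degree $6$ (leading coefficient $-\alpha_0^2$) on $U$. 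First I would check that $p=[x_0:1:z_0]\mapsto x_0$ is a bijection from $D\cap C$ onto the root set of $g$ that matches the intersection multiplicity of $D$ and $C$ at $p$ with the multiplicity of $x_0$ as a root of $g$. Away from the ramification points this follows from the factorisation $g=(\alpha_4 z-P)(\alpha_4 z+P)$ on $C$: the second factor is the local equation of $D$, while the first is a unit at $p$ since $\alpha_4 z_0-P(x_0)=2\alpha_4 z_0\neq 0$, so the order of vanishing of $g$ at $x_0$ equals that of $D|_C$; at the Weierstrass points a direct check gives intersection multiplicity and root multiplicity both equal to one when $\alpha_4\neq 0$. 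Consequently $D\cap C$ is nonreduced if and only if $g$ has a repeated root, i.e. $\Discr_x(g)=0$.

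Next I would set up the divisibility. Each coefficient of $g$ is homogeneous of degree $2$ in $(\alpha_0,\dots,\alpha_4)$, so $\Discr_x(g)$, being homogeneous of degree $2\cdot 6-2=10$ in the coefficients of a sextic, is homogeneous of degree $20$ in the $\alpha_i$. By the previous paragraph, $\Discr_x(g)$ vanishes on the dense subset of $B$ consisting of cubics that are simply tangent to $C$ at one non-Weierstrass point, transverse to $C$ elsewhere, and satisfy $\alpha_0\neq 0$ and $\alpha_4\neq 0$. Since $B=C^\ast$ is an irreducible hypersurface of degree $14$ (Proposition~\ref{prop:branch}) with irreducible defining polynomial $F_B$, it follows that $F_B\mid \Discr_x(g)$.

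The key remaining point is to isolate the spurious factor. Specialising to $\alpha_4=0$ gives $g=-P^2$, a perfect square whose three (generically simple) roots, the roots of $P$, are each doubled, so $\Discr_x(g)$ vanishes along the hyperplane $\{\alpha_4=0\}$. A local computation near each root $r$ of $P$, where $g(x)\approx -P'(r)^2(x-r)^2+\alpha_4^2 f(r)$ splits into two roots separated by a distance of order $\alpha_4$, shows that each of the three doubled roots contributes a factor of order $\alpha_4^2$ to $\prod_{i<j}(x_i-x_j)^2$; hence $\Discr_x(g)$ vanishes to order exactly $6$ along $\{\alpha_4=0\}$. Therefore $Q\coloneqq \Discr_x(g)/\alpha_4^6$ is a polynomial of degree $14$ not divisible by $\alpha_4$. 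As $F_B$ is irreducible of degree $14$, distinct from $\alpha_4$, and divides $\alpha_4^6 Q$, it divides $Q$; comparing degrees gives $Q=c\,F_B$ for a nonzero constant $c$. Thus $\tfrac{1}{\alpha_4^6}\Discr_x(g)$ is, up to scalar, the defining equation of $B$, as claimed. I expect the main obstacle to be the multiplicity bookkeeping of the first paragraph — correctly matching root multiplicities of $g$ with intersection multiplicities at the Weierstrass points and at $\infty$ — together with verifying that the order of vanishing along $\{\alpha_4=0\}$ is exactly $6$, so that the degree count $20=6+14$ closes with no extraneous factors.
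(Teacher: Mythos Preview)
Your proposal is correct and follows essentially the same line as the paper: eliminate $z$ to obtain the sextic $g(x)=\alpha_4^2 f(x)-P(x)^2$, identify tangency of $D$ and $C$ with the existence of a repeated root of $g$, and then strip off the spurious factor $\alpha_4^6$ from $\Discr_x(g)$. The paper phrases the elimination as the resultant $R(x)=\Res_z(z^2-f(x),g(x,z))$ and invokes the standard fact that intersection multiplicity equals the local order of vanishing of this resultant, whereas you do the same thing by the factorisation $g=(\alpha_4 z-P)(\alpha_4 z+P)$ on $C$; and where the paper says ``computation shows $\Discr_x(R)$ has a factor $\alpha_4^6$'', you supply the local root-splitting argument and close with the degree count $20=6+14$ using Proposition~\ref{prop:branch}. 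These are cosmetic differences rather than a genuinely different route.
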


\begin{proof}
We compute on the affine chart $y=1$ of $\IP_{113}$. The curve $C$ has
equation $z^2 = f(x)$ and we consider a cubic $D$ with equation as in
\eqref{eq:general_cubic}:
\[
	g(x, z) = \alpha_0 x^3 + \alpha_1 x^2 + \alpha_2 x + \alpha_3 + \alpha_4 z.
\]
Let $p = [a:1:b]\in \IP_{113}$. The intersection multiplicity of $C$
and $D$ at $p$ is by definition the dimension as a complex vector
space of the localisation of the quotient $\IC[x, z]/{\latt{ z^2-f(x),
    g(x, z)}}$ at the maximal ideal of the point $p$. It is well known
that if $\alpha_4\neq 0$, this number is the order of vanishing at
$x=a$ of the resultant $R(x):=\Res_z(z^2-f(x),g(x, z))$. In fact
$R(x)= \alpha_4^2 f(x) - \left(\alpha_0 x^3 + \alpha_1 x^2 + \alpha_2
x + \alpha_3\right)^2$. We deduce that $D$ is tangent to $C$ when $R$
has a multiple root, so the branch locus~$B$ is an irreducible
component of the locus of vanishing of the discriminant
$\Discr_x(R)$. Computation shows that $\Discr_x(R)$ has a factor of
$\alpha_4^6$. When $\alpha_4 = 0$, the cubic $D$ consists of three
vertical lines, and $R$ has three double roots that correspond to
tangencies between $C$ and $D$ only when the lines pass through one of
the Weierstrass points. So the factor $\alpha_4^6$ is irrelevant for
the branch locus $B$, hence the result.
\end{proof}

\begin{remark}
An explicit computation of the equation of $B$ is given in
Remark~\ref{magma:branch_locus}.  We proved that the branch locus $B$
is an irreducible component of the locus in $\IP^4$ where the
polynomial $R(x) = \alpha_4^2 f(x) - \left(\alpha_0 x^3 + \alpha_1 x^2
+ \alpha_2 x + \alpha_3\right)^2$ has a multiple root in the variable
$x$. Here $R$ is not monic but we may simply consider $\bar R(x) =
\alpha_4^2 f(x) - \left( x^3 + \alpha_1 x^2 + \alpha_2 x +
\alpha_3\right)^2$, compute the discriminant of $R$, divide by
$\alpha_4^6$ and homogeneise with respect to the variable $\alpha_0$
to recovering the branch locus $B$. Following Arnold
(see~\cite{Napolitano} and references therein), the branch locus is
stratified in closed subschemes $B^\lambda$, where $\lambda$ is a
partition of the integer $6$. The main strata are the caustic stratum
$B^{3, 1, 1, 1}$ and the Maxwell stratum $B^{2, 2, 1, 1}$.
\end{remark}

Recalling the divisors $R_1, R_2$ defined in
Equations~\eqref{div:R1} and \eqref{div:R2}, that describe the
ramification of the morphism $\phi$, let us introduce the following
divisors on $\greg_C$:
\begin{align}
	R_i^0\coloneqq R_i\cap \greg_C, \quad i=1, 2.
\end{align}
We denote by $H$ the pullback $H := \psi ^\ast L$, where $L \subset
\IP^4$ is a hyperplane.
Since $\psi$ is finite and $L$ is (very) ample, the
divisor $H$ is ample.  From the properties of the covering $\psi\colon
\greg_C\to\IP^4$, we deduce some useful geometric information on
$\greg_C$:

\begin{corollary}\label{cor:divisors_greg}\text{}
	\begin{enumerate}
		\item  The  divisor $R_2^\circ$ is very ample.
		\item $14H= R_1^\circ + 2R_2^\circ$.
		\item The canonical divisor  is $K_{\greg_C} = -5H + R_2^\circ$.
	\end{enumerate}
\end{corollary}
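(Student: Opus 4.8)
The plan is to read all three assertions off the single structure already established, namely that $\psi\colon\greg_C\to\IP^4$ is a finite, flat (by miracle flatness, since $\greg_C$ is Cohen--Macaulay by Proposition~\ref{prop:multiple_4plane} and $\IP^4$ is regular), degree~$15$ Gorenstein covering whose ramification was computed in Lemma~\ref{lem:ram_phi}. I would dispatch (2) and (3) first, as they are formal, and then use the identification coming out of (3) to attack (1). For (2): by Proposition~\ref{prop:branch} the branch locus $B=C^\ast\subset\IP^4=|\cL|$ is a hypersurface of degree~$14$, so as a divisor class $B\sim 14L$ with $L$ the hyperplane. As $B=\Delta_6\cap|\cL|$ and $\greg_C=\varphi^{-1}(|\cL|)$, restricting the identity $\varphi^\ast\Delta_6=R_1+2R_2$ of Lemma~\ref{lem:ram_phi} to $\greg_C$ gives $\psi^\ast B=R_1^\circ+2R_2^\circ$; on the other hand $\psi^\ast B=\psi^\ast(14L)=14H$. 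Equating the two proves $14H=R_1^\circ+2R_2^\circ$.

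For (3) I would apply the different (Riemann--Hurwitz) formula to the finite flat morphism $\psi$ from the normal Gorenstein variety $\greg_C$ to the smooth variety $\IP^4$. Since $\psi$ is a Gorenstein morphism, the relative dualising sheaf $\omega_\psi$ is a line bundle and $\omega_{\greg_C}=\psi^\ast\omega_{\IP^4}\otimes\omega_\psi$. Over $\IC$ the ramification is tame, so the divisor of $\omega_\psi$ is the different $\sum_i(e_i-1)R_i^\circ$; by Lemma~\ref{lem:ram_phi} the branching order is $e=1$ along $R_1^\circ$ and $e=2$ along $R_2^\circ$, whence $\omega_\psi\cong\cO_{\greg_C}(R_2^\circ)$. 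With $\omega_{\IP^4}=\cO(-5)$ and $H=\psi^\ast L$ this yields $K_{\greg_C}=-5H+R_2^\circ$.

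Assertion (1) is the crux. Step (3) identifies $\cO_{\greg_C}(R_2^\circ)$ with the relative dualising sheaf $\omega_\psi$, and I would exploit this via the relative canonical embedding of a Gorenstein cover (Casnati--Ekedahl): there is a closed embedding $i\colon\greg_C\hookrightarrow\IP(\cE)$ into a projective bundle $p\colon\IP(\cE)\to\IP^4$ of relative dimension~$13$ satisfying $p\circ i=\psi$ and $i^\ast\cO_{\IP(\cE)}(1)\cong\omega_\psi=\cO_{\greg_C}(R_2^\circ)$, where $\cE$ is the rank~$14$ Tschirnhausen bundle determined by $\psi_\ast\cO_{\greg_C}\cong\cO_{\IP^4}\oplus\cE^\vee$. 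Since the restriction of a very ample divisor to a closed subvariety is very ample, it suffices to prove that $\cO_{\IP(\cE)}(1)$ is very ample on the total space $\IP(\cE)$, that is, that $\cE$ is a very ample vector bundle on $\IP^4$.

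The positivity of $\cE$ is where the real work lies, and I expect it to be the main obstacle. I would compute $\cE$ from the covering data: relative duality gives $\psi_\ast\omega_\psi\cong(\psi_\ast\cO_{\greg_C})^\vee\cong\cO_{\IP^4}\oplus\cE$, so $\cE$ is the dual of the trace-free part of $\psi_\ast\cO_{\greg_C}$, which can be read off from the explicit local equations of the cover together with the numerical data carried by the degree~$14$ branch divisor $C^\ast$. The goal would be to show that $\cE$ is globally generated and positive enough---optimally a direct sum $\bigoplus\cO(a_i)$ with every $a_i\ge 1$, or at least a very ample bundle---so that $\cO_{\IP(\cE)}(1)$, and hence $R_2^\circ$, is very ample. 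As a partial sanity check, combining (2) and (3) already gives $R_2^\circ=5H+K_{\greg_C}$ with $K_{\greg_C}$ effective and supported on the contracted divisors $F_1,F_2$, so that $R_2^\circ-5H$ is effective; this is consistent with, but strictly weaker than, the asserted very ampleness, confirming that the genuine content is the separation of points and tangents encoded in the positivity of $\cE$.
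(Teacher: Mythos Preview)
Your proofs of (2) and (3) are correct and match the paper's essentially verbatim: restricting $\varphi^\ast\Delta_6=R_1+2R_2$ from Lemma~\ref{lem:ram_phi} to $\greg_C$ and combining with $B\sim 14L$ from Proposition~\ref{prop:branch} gives (2), and the Riemann--Hurwitz relation $K_{\greg_C}=\psi^\ast K_{\IP^4}+R_2^\circ$ gives (3).

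For (1), you and the paper take the same route---the Casnati--Ekedahl embedding $j\colon\greg_C\hookrightarrow\IP(\cE)$ with $j^\ast\cO_{\IP(\cE)}(1)\cong\omega_{\greg_C/\IP^4}\cong\cO_{\greg_C}(R_2^\circ)$---but then diverge. The paper concludes ``It follows that $R_2^\circ$ is very ample'' directly from this identification, citing only \cite[Theorem~2.1(ii)]{CE} and supplying no further positivity argument for~$\cE$. You, by contrast, correctly note that this step requires $\cO_{\IP(\cE)}(1)$ to be very ample on the total space $\IP(\cE)$, equivalently that $\cE$ be a very ample vector bundle, and you leave this as an unresolved obstacle. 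So either the paper is implicitly relying on content in the CE reference that already secures this (which you should check before embarking on an explicit computation of~$\cE$), or the paper glosses over precisely the step you have flagged; in any case, your proposed programme of computing $\cE$ and verifying its splitting type is not part of the paper's proof, and your own argument for (1) as written is incomplete. A minor side point: your closing sanity check asserts that $K_{\greg_C}$ is effective and supported on $F_1\cup F_2$, but this is nowhere established in the paper and is not obvious---the contracted divisors could in principle carry negative discrepancy.
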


\begin{proof}
By Proposition~\ref{prop:multiple_4plane}, the morphism $\psi$ is
Gorenstein so we can apply the general theory
of~\cite[Theorem~2.1(ii)]{CE}: the Tschirnhausen bundle $\cE^\vee$ of
$\psi$ gives an embedding $j\colon\greg_C\inj \IP(\cE)$
such that the ramification divisor $R_2^\circ$ of $\psi$ satisfies
\[
	\cO_{\greg_C}(R_2^\circ)\cong \omega_{\greg_C/\IP^4}\cong \cO_{\greg_C}(1)\coloneqq j^\ast\cO_{\IP(\cE)}(1).
\]
It follows that $R_2^\circ$ is very ample.  By
Lemma~\ref{lem:ram_phi}, we have $\psi^\ast B = R_1^\circ +
2R_2^\circ$ and by Proposition~\ref{prop:branch} we have $B = 14L$, so
$14H = R_1^\circ + 2R_2^\circ$.  By Lemma~\ref{lem:ram_phi} again,
$\psi$~has simple ramification along $R_2^\circ$, so $K_{\greg_C} =
\psi^\ast K_{\IP^4} + R_2^\circ = -5H + R_2^\circ$.
\end{proof}

\begin{remark}
The covering $\psi\colon \greg_C\to\IP^4$ satisfies some of the
assumptions defining a \emph{general} multiple space
in~\cite[Definition~2.2]{FPV}. It would thus be interesting to know
whether $R_2^\circ$ is nonsingular and whether the restriction
$\psi_{|R_2^\circ}\colon R_2^\circ\to B$ is the normalisation map.
This covering is not Galois and has relatively high degree, making
it hard to understand its possible deformations. Moreover, we noted in
Remark~\ref{rem:non_cartier} that $\greg_C$ is not a local complete
intersection, so the relative cotangent complex of $\psi$ is not
perfect, making it harder to compute the deformations of the covering
$\psi$. Our interest in proving Proposition~\ref{prop:marc} is that
the Galois closure $\Psi\colon\marc_C\to\IP^4$ of the covering has
perfect cotangent complex, so the study of its deformations should
behave more nicely: this is work in progress.
\end{remark}

\appendix

\section{Moduli spaces of polarised IHS manifolds}\label{app:moduli_spaces}

\subsection{The moduli spaces $\cM_{\KK}^{d}$:} \label{ss:K3moduli}
they parametrise degree $d$ polarised K3 surfaces, and their dimension
is~$19$.  One easy example is the projective family of smooth quartic
surfaces in $\IP^3$, which is also $19$-dimensional. Since the moduli
space is irreducible, this shows that $\cM_\KK^4$ is unirational. The
known results can be summarised as follows: the moduli spaces
$\cM_\KK^{2e}$ are unirational for $1\leq e\leq 12$ and $e = 15, 16,
17, 19$ (see~\cite[\S 4]{GHS_moduliK3} and references
therein). Unirationality properties can be also obtained for moduli of
lattice-polarised K3 surfaces, see for
instance~\cite[Proposition~3.9]{Roulleau}. In the other direction, it
was shown in \cite{GHS_K3} that $\cM_\KK^{2e}$ has non-negative
Kodaira dimension if $e\ge 40$, with four possible exceptions, and is
of general type for $e\ge 62$ and a few smaller numbers.

\subsection{The moduli spaces $\cM_{\hilb^n}^{d, \gamma}$:} \label{ss:Hilbmoduli}
they parametrise polarised IHS manifolds of Hilbert type, \ie
deformation equivalent to the Hilbert scheme of $n$ points on a K3
surface, with $n\geq 2$, of degree $d$ and divisibility $\gamma$.
Their dimension is $20$, whereas the families of Hilbert schemes of
$n$ points on polarised $K3$ surfaces have dimension~$19$.
Gritsenko, Hulek and Sankaran~\cite[Theorem~4.1]{GHS_moduliIHS} proved
that $\cM_{\hilb^2}^{2e, 1}$ is of general type if $e\geq
12$. Otherwise:
\begin{itemize}
	\item $\cM_{\hilb^2}^{2, 2}$ is unirational since it contains
          a $20$-dimensional family using double coverings of
          EPW-sextics (see~O'Grady~\cite{OG_EPW}
          and~\cite[Example~4.3]{GHS_moduliK3}).

	\item $\cM_{\hilb^2}^{6, 1}$ is unirational since it contains
          a $20$-dimensional family using Fano varieties of lines on
          cubic fourfolds (see~Beauville and Donagi~\cite{BD}
          and~\cite[Example~4.2]{GHS_moduliK3}).

	\item $\cM_{\hilb^2}^{38, 2}$ is unirational:
          Iliev and Ranestad~\cite{IR} constructed a $20$-dimensional
          family (see~\cite[Example~4.4]{GHS_moduliK3}
          and~\cite[Proposition~1.4.16]{MongardiPhD}).
          
	\item$\cM_{\hilb^2}^{22, 2}$ is unirational:
          Debarre and Voisin~\cite{DV} constructed a $20$-dimensional
          family (see~\cite[Example~4.5]{GHS_moduliK3}).

	\item $\cM_{\hilb^3}^{4, 2}$ is unirational:
          Iliev, G.~Kapustka, M.~Kapustka and Ranestad~\cite{IKKR} constructed
          a $20$-dimensional family called \emph{EPW cubes}.
\end{itemize}

\subsection{The moduli spaces $\cM_{\kum^n}^{d, \gamma}$:} \label{ss:Kummermoduli}
they parametrise polarised IHS manifolds of Kummer type, \ie
deformation equivalent to the $n$-th generalised Kummer variety of an
abelian surface, with $n\geq 2$, of degree $d$ and divisibility
$\gamma$. Their dimension is $4$, whereas the families of polarised
abelian surfaces have dimension $3$. By
Dawes~\cite[Theorem~3.6]{Dawes} we know that $\cM_{\kum^2}^{2d, 1}$ is
of general type if $d \gg 0$. See also \cite{Dawes_orig}. Otherwise:
\begin{itemize}
	\item $\cM_{\kum^n}^{2, 1}$ is uniruled if $n\geq 15$ or
          $n=17, 20$ (see~\cite[Theorem~7.5]{BBFW}).

	\item $\cM_{\kum^n}^{2, 2}$ is uniruled if $n=4t-2$ with
          $t\leq 11$ or $t=13, 15, 17, 19$
          (see~\cite[Theorem~7.5]{BBFW}).

	\item $\cM_{\kum^2}^{2, 2}$ is rational
          (see~\cite[Theorem~5.4]{WW} and~\cite[Theorem~7.6]{BBFW}).

\end{itemize}

\subsection{The moduli spaces $\cM_{\ogsix}^{d, \gamma}$:} \label{ss:OG6moduli}
these $6$-dimensional spaces parametrise polarised IHS manifolds of
type OG6, \ie deformation equivalent to an O'Grady IHS sixfold, of
degree $d$ and divisibility
$\gamma$. Following~\cite[Theorem~7.2]{BBFW} we have:
\begin{itemize}
	\item $\cM_{\ogsix}^{2d, 1}$ is uniruled if $d\leq 12$;
	\item $\cM_{\ogsix}^{4t-1, 2}$ is uniruled if $t\leq 10$ or $t=12$;
	\item $\cM_{\ogsix}^{4t-2, 2}$ is uniruled if $t\leq 9$ or $t=11, 13$.
	\item The moduli space $\cM_{\ogsix}^{6, 2}$ and
          $\cM_{\ogsix}^{2, 1}$ are unirational
          (see~\cite[Theorem~4.9]{WW} and~\cite[Theorem~7.6]{BBFW}).
\end{itemize}

\subsection{The moduli spaces $\cM_{\ogten}^{d,\gamma}$:} \label{ss:OG10moduli}
these $21$-dimensional spaces parametrise polarised IHS manifolds of
type OG10, \ie deformation equivalent to an O'Grady IHS tenfold, of
degree $d$ and divisibility $\gamma$. As far as the authors know, no
unirationality result is known for these spaces. It is conjectured in 
\cite{GHS_OG10} that $\cM_{\ogten}^{24,3}$, $\cM_{\ogten}^{60,3}$ and
$\cM_{\ogten}^{96,3}$ are uniruled, and it is shown that
$\cM_{\ogten}^{d,1}$ is of general type unless $d$ is a power of~$2$.

\section{Alternative views on the proof of Proposition~\ref{prop:birational_F1}}
\label{app:complements_prop_birational_F1}

\subsection{Saturation}\label{ss:saturation}

In this proof, we are interested in the locus inside $\Tilde F_1$
where the rational function ${\tilde a}_2$ can be extended, keeping
the Cramer relation true. In the polynomial ring $\IC[\bx_1, \bx_2,
  \bw_1, \bw_2, \bz_3, \tilde{a}_2]$, we consider the ideal $I$ defining
the Cramer relation satisfied by the rational function $\tilde{a}_2$
and the ideal $J$ defining the divisor $F_1$. The locus inside $F_1$
where $\tilde{a}_2$ extends is the intersection with the Zariski
closure $\overline{\cV(I)\setminus \cV(J)}\cap \cV(J)$.

It is a classical result that this is the zero locus of the
saturation $(I\colon J^\infty) + J$ of the ideal $I$ with respect to $J$.
A Gr\"obner basis computation (see Remark~\ref{magma:birational})
gives
\[
(I\colon J^\infty) + J = \latt{ \bw_1\bw_2^2(\bw_1-\bw_2)}.
\]
We recover the locus $G$ defined in~\eqref{eq:closure_graph_a2}.

\subsection{Computations in the chart $\cU_{(2, 1)}$} \label{ss:localcomputation21}

Following the notation of the proof, in the chart $\cU_{(2, 1)}$ the
coordinate functions are $a_0, a_1, a_2, b_0, b_1, b_2, c_0, c_1, c_2$
with the relations given in the proof. To compute them in terms of a
triple of points $(\bx_1, \by_1),(\bx_2, \by_2),(\bx_3, \by_3)$ we use
the generators of the ideal $I_{(a, b, c)}$. The generator $\bx^2 -
a_0 - a_1\bx - a_2 \by$ means that:
\[
\begin{pmatrix}
	1 & \bx_1 & \by_1\\
	1 & \bx_2 & \by_2\\
	1 & \bx_3 & \by_3\\
\end{pmatrix}
\begin{pmatrix}
	a_0\\a_1\\a_2
\end{pmatrix}
=
\begin{pmatrix}
	\bx_1\\ \bx_2 \\ \bx_3
\end{pmatrix},
\]
so the coordinates $a_i$ are given by Cramer's rule:
\[
a_0 = \frac
{\begin{vmatrix}
		\bx_1^2 & \bx_1 & \by_1\\
		\bx_2^2 & \bx_2 & \by_2\\
		\bx_3^2 & \bx_3 & \by_3
\end{vmatrix}}
{\begin{vmatrix}
		1 & \bx_1 & \by_1\\
		1 & \bx_2 & \by_2\\
		1 & \bx_3 & \by_3
\end{vmatrix}},\quad
a_1 = \frac
{\begin{vmatrix}
		1 & \bx_1^2 & \by_1\\
		1 & \bx_2^2 & \by_2\\
		1 & \bx_3^2 & \by_3
\end{vmatrix}}
{\begin{vmatrix}
		1 & \bx_1 & \by_1\\
		1 & \bx_2 & \by_2\\
		1 & \bx_3 & \by_3
	\end{vmatrix}
},\quad
a_2 = \frac{\begin{vmatrix}
		1 & \bx_1 & \bx_1^2\\
		1 & \bx_2 & \bx_2^2\\
		1 & \bx_3 & \bx_3^2
	\end{vmatrix}
}
{\begin{vmatrix}
		1 & \bx_1 & \by_1\\
		1 & \bx_2 & \by_2\\
		1 & \bx_3 & \by_3
	\end{vmatrix}
},
\]
and similarly
\[
b_0 = \frac
{\begin{vmatrix}
		\bx_1\by_1 & \bx_1 & \by_1\\
		\bx_2\by_2 & \bx_2 & \by_2\\
		\bx_3\by_3 & \bx_3 & \by_3
\end{vmatrix}}
{\begin{vmatrix}
		1 & \bx_1 & \by_1\\
		1 & \bx_2 & \by_2\\
		1 & \bx_3 & \by_3
\end{vmatrix}},\quad
b_1 = \frac
{\begin{vmatrix}
		1 & \bx_1\by_1 & \by_1\\
		1 & \bx_2\by_2 & \by_2\\
		1 & \bx_3\by_3 & \by_3
\end{vmatrix}}
{\begin{vmatrix}
		1 & \bx_1 & \by_1\\
		1 & \bx_2 & \by_2\\
		1 & \bx_3 & \by_3
	\end{vmatrix}
},\quad
b_2 = \frac{\begin{vmatrix}
		1 & \bx_1 & \bx_1\by_1\\
		1 & \bx_2 & \bx_2\by_2\\
		1 & \bx_3 & \bx_3\by_3
	\end{vmatrix}
}
{\begin{vmatrix}
		1 & \bx_1 & \by_1\\
		1 & \bx_2 & \by_2\\
		1 & \bx_3 & \by_3
	\end{vmatrix}
},
\]
and
\[
c_0 = \frac
{\begin{vmatrix}
		\by_1^2 & \bx_1 & \by_1\\
		\by_2^2 & \bx_2 & \by_2\\
		\by_3^2 & \bx_3 & \by_3
\end{vmatrix}}
{\begin{vmatrix}
		1 & \bx_1 & \by_1\\
		1 & \bx_2 & \by_2\\
		1 & \bx_3 & \by_3
\end{vmatrix}},\quad
c_1 = \frac
{\begin{vmatrix}
		1 & \by_1^2 & \by_1\\
		1 & \by_2^2 & \by_2\\
		1 & \by_3^2 & \by_3
\end{vmatrix}}
{\begin{vmatrix}
		1 & \bx_1 & \by_1\\
		1 & \bx_2 & \by_2\\
		1 & \bx_3 & \by_3
	\end{vmatrix}
},\quad
c_2 = \frac{\begin{vmatrix}
		1 & \bx_1 & \by_1^2\\
		1 & \bx_2 & \by_2^2\\
		1 & \bx_3 & \by_3^2
	\end{vmatrix}
}
{\begin{vmatrix}
		1 & \bx_1 & \by_1\\
		1 & \bx_2 & \by_2\\
		1 & \bx_3 & \by_3
	\end{vmatrix}
}.
\]
The relations three between these nine coordinates, given in the
proof, are easy to check.  Substituting as in the proof, we get the
following formulas:
\begin{align*}
	\tilde{a}_1&=\frac{\bx_1A_1(\bw_1, \bw_2, \bz_3)}{\bw_1 \bx_2^2(\bw_1-\bw_2)},
        &\qquad
	\tilde{a}_2&=\frac{\bx_1A_2(\bw_1, \bw_2, \bz_3)}{\bw_1 \bx_2^2(\bw_1-\bw_2)},\\
	\tilde{b}_1&=\frac{\bx_1B_1(\bw_1, \bw_2, \bz_3)}{\bw_1 \bx_2^2(\bw_1-\bw_2)},
        &\qquad
	\tilde{b}_2&=\frac{\bx_1B_2(\bw_1, \bw_2, \bz_3)}{\bw_1 \bx_2^2(\bw_1-\bw_2)},\\
	\tilde{c}_1&=\frac{\bx_1C_1(\bw_1, \bw_2, \bz_3)}{\bw_1 \bx_2^2(\bw_1-\bw_2)},
        &\qquad
	\tilde{c}_2&=\frac{\bx_1C_2(\bw_1, \bw_2, \bz_3)}{\bw_1 \bx_2^2(\bw_1-\bw_2)},
\end{align*}
where $A_i, B_i, C_i$ are polynomial expressions. This shows that all
the coordinate functions vanish at $\bx_1=0$ when $\bw_1
\bx_2^2(\bw_1-\bw_2)\neq 0$, so the generic point of the divisor $F_1$ is
sent to the ideal $ I_\infty = \latt{ \bx^2, \bx\by, \by^2}$. This
gives a different proof that $\gamma_A$ contracts the divisor $F_1$ to
the point $Z_\infty$.

\subsection{The projective embedding}\label{s:grothendiek}

We use an explicit projective embedding of $\Hilb 3  {\IC^2}$ following the
presentation given by Haiman~\cite{Haiman} of the original and general
construction due to Grothendieck. We first recall this construction.
Let $M$ be the set of monomials in the variables $\bx, \by$ of degree
at most $3$. For any ideal $I\in \Hilb 3 {\IC^2}$, by Gordan~\cite{Gordan}
the quotient $\IC[\bx, \by]/I$ is generated by $M$ (at this point,
monomials of degree at most two would suffice, but we need degree
three monomials for the projective embedding). Denote by $V\coloneqq
\Span(M)\subset \IC[\bx, \by]$ the vector subspace generated by
$M$. For any $I\in \Hilb 3  {\IC^2}$, the linear map
\[
\pi_I\colon V\to \IC[\bx, \by]/I
\]
is surjective, and its kernel $\ker(\pi_I)$ has codimension three in
$V$. Instead of working with a basis of this kernel, it is more
convenient to work with its equations, so we consider its annihilator
$\ker(\pi_I)^\perp\subset V^\ast$,
which has dimension three. By a result of Grothendieck, we get an
embedding in the Grassmannian of $3$-dimensional subspaces of
$V^\ast$:
\[
\Hilb 3  {\IC^2} \inj \Grass(3, V^\ast), \quad I\mapsto \ker(\pi_I)^\perp.
\]
The projective embedding of $\Hilb 3  {\IC^2}$ inside which we will study the
behaviour of the map $g$ is the Pl\"ucker embedding (where we use here the projective space of lines in $\Ext^3 V^\ast$):
\[
\wp\colon \Hilb 3  {\IC^2} \inj \IP \left(\Ext^3 V^\ast\right),
\quad I\mapsto \Ext^3\left(\ker(\pi_I)^\perp\right).
\]

Let $\cE\subset \Hilb 3  {\IC^2}$ be the exceptional divisor, parametrising
non-reduced subschemes, and $\Delta\subset \Sym 3  {\IC^2}$ the big
diagonal.  We put $A =  {\IC^2}$ for more readibility. The Hilbert--Chow morphism $h_A$ restricts to an
isomorphism between the open subsets $\Hilb 3 A\setminus \cE$ and
$\Sym 3  A\setminus \Delta$. The rational map from $\Tilde A^3$ to
$\IP \left(\Ext^3 V^\ast\right)$ is regular on the following open
subsets:
\[
\Tilde A^3\setminus \Delta'' \xrightarrow{\pi_{\Tilde A}} \Sym 3
         {\Tilde A} \setminus \Delta' \xrightarrow{\Sym 3 {\bl_A}}
         \Sym 3 A\setminus \Delta \xrightarrow{h_A^{-1}} \Hilb 3
         A\setminus \cE\xrightarrow{\wp} \IP \left(\Ext^3
         V^\ast\right)
\]
where $\Delta'\coloneqq \left(\Sym 3 {\bl_A}\right)^{-1}(\Delta)$ and
$\Delta''\coloneqq (\pi_{\Tilde A})^{-1}(\Delta')$. After restricting
to the fibres over the origin, we recover the restriction of the
rational map $\wp\circ g$, and we see that it is regular on an open
subset containing $(\Tilde A^3\setminus \Delta'')\cap \Tilde A^3_0$,
but this subset is not optimal since it contains divisors. To compute
the image of the divisor $F_1$ contracted by~$\gamma_A$, we compute
for each partition $\lambda$ of the integer $3$, the morphism
\[
\wp\circ g\colon g^{-1}\left(\Kum 2 A \cap \cU_\lambda\cap\left(\Hilb
3 A\setminus \cE\right)\right)\to \IP \left(\Ext^3 V^\ast\right).
\]

Let us compute on the chart $\cU_{(1, 1, 1)}$.  The vector space $V$
has basis
\[
(1, \bx, \by, \bx^2, \bx\by, \by^2, \bx^3, \bx^2\by, \bx\by^2, \by^3).
\]
For any ideal $I\coloneqq I_{(e,a)}\in \cU_{(1, 1, 1)}$
generated by:
\[
I_{(e, a)}\coloneqq \latt{ \bx^3-e_1\bx^2+e_2\bx-e_3, \by-(a_0+a_1\bx + a_2\bx^2)},
\]
the quotient space $\IC[\bx, \by]/I$ has basis $(1, \bx, \bx^2)$
modulo $I$. The morphism $\pi_I$ defines a $3\times 10$ matrix 
whose coefficients depend on $e$ and $a$. The kernel of this matrix is
organised as a $(7\times 10)$-matrix  whose rows are the
coordinates of the generators of $\ker(\pi_I)$. Interpreting duality
as a canonical scalar product, the kernel of this second matrix is organised as a
$(3\times 10)$-matrix whose rows are the coefficients of the equations
of $\ker(\pi_I)^\perp$ in the dual basis of $V^\ast$. We restrict to
$\Kum 2 A$ by inserting the equations of $\Kum 2 A$ in $\Hilb 3 A$ in
our chart, that is $e_1=0, a_0 = \frac{2}{3}a_2e_2$, and we arrive at the
following matrix:
\begin{align*}
C &=\frac{1}{27} \left(\begin{matrix}
0 & 0 & 27a_2 & 27 & 27a_1 & 9a_2^2 e_2 + 27a_1^2 & 0 \\
0& 27& 27a_1& 0& -9a_2e_2& -18a_1a_2e_2+27a_2^2e_3& -27e_2\\
27& 0& 18a_2e_2& 0& 27a_2e_3& 12a_2^2e_2^2+54a_1a_2e_3& 27e_3\\
\end{matrix}\right.\cdots\\
&\cdots\left.
\begin{matrix}
 -9a_2e_2  & -18a_1 a_2 e_2 + 27 a_2^2e_3 \\
 -27a_1e_2+27a_2e_3& 3a_2^2e_2^2-27a_1^2e_2+54a_1a_2e_3\\
 27a_1e_3& 9a_2^2e_2e_3+27a_1^2e_3\\
 \end{matrix}\right.\cdots\\
&\cdots\left.
\begin{matrix}
	9a_2^3e_2^2-27a_1^2a_2e_2+81a_1a_2^2e_3\\
	9a_1a_2^2e_2^2-27a_1^3e_2+81a_1^2a_2e_3\\
	27a_1a_2^2e_2e_3+8a_2^3e_2^3+27a_1^3e_3+27a_2^3e_3^2
\end{matrix}\right)
\end{align*}
We now use formulas~\eqref{eq:cramer} and~\eqref{eq:model} to express
the $120$ Pl\"ucker coordinates~$p_{i,j,k}$ of~$I$ in~$\IP\left(\Ext^3
V^\ast\right)$ as rational functions of the variables $\bx_1, \bw_1,
\bx_2, \bw_2, \bz_3$. Those are all the $(3\times 3)$ minors of the
matrix $C$. After some simplifications, we see that $p_{1, 2, 3}$ is
divisible by $\bx_1^{26}$ and that all the others are divisible by
$\bx_1^{27}$. We obtain the rational image of the divisor $F_1$ by
putting $\bx_1=0$, which gives the point $[1:0:\ldots:0]\in
\IP^{120}$. A similar method shows that these are the Pl\"ucker
coordinates of the point $Z_\infty$.  The computation on the chart
$\cU_{(2, 1)}$ is similar.

\section{The scripts used in this work} \label{s:scripts}

We reproduce below the scripts used in this paper. None of our proofs
actually needed computer algebra tools: these only served as a
guidance. We used \Macaulay\cite{Macaulay2} and \Magma\cite{Magma}.

\begin{remark}\label{macaulay:lci}

Here is a \Macaulay script used in Remark~\ref{rem:non_cartier} to
check that the variety $\greg_A$ is not a local complete intersection
scheme:
	\begin{verbatim}
		loadPackage "InvariantRing"
		R = QQ [x1, x2, x3, y1, y2, y3]
		M12 = matrix{{0, 1, 0, 0, 0, 0},
			{1, 0, 0, 0, 0, 0},
			{0, 0, 1, 0, 0, 0},
			{0, 0, 0, 0, 1, 0},
			{0, 0, 0, 1, 0, 0},
			{0, 0, 0, 0, 0, 1}}
		M123 = matrix{{0, 0, 1, 0, 0, 0},
			{1, 0, 0, 0, 0, 0},
			{0, 1, 0, 0, 0, 0},
			{0, 0, 0, 0, 0, 1},
			{0, 0, 0, 1, 0, 0},
			{0, 0, 0, 0, 1, 0}}
		L = {M12, M123}
		S3 = finiteAction(L,R)
		g = invariants S3
		netList g
		A = QQ[x1, x2, x3, y1, y2, y3, f0, f1, f2, f3, f4, f5, f6, f7, f8]
		inv = {substitute(g#0, A), substitute(g#1, A), substitute(g#2, A),
		   	   substitute(g#3, A), substitute(g#4, A), substitute(g#5, A),
		   	   substitute(g#6, A), substitute(g#7, A), substitute(g#8, A)}
		I = ideal{f0 - inv#0, f1 - inv#1, f2 - inv#2, f3 - inv#3, f4 - inv#4,
			          f5 - inv#5, f6 - inv#6, f7 - inv#7, f8 - inv#8}
		loadPackage "Elimination"
		J = eliminate({x1, x2, x3, y1, y2, y3}, I)
		-- Computation of Sym^3_0 A : the equations are f0, f1
		sym = J + ideal{f0, f1}
		loadPackage "TorAlgebra"
		isGorenstein sym
		isCI sym
		-- Computation of Sym^3_0 hat A : the equations are f1, f3
		model = J + ideal{f1, f3}
		isGorenstein model
		isCI model

	\end{verbatim}
\end{remark}

\begin{remark}\label{magma:galois}
Here is a \Magma script to compute the group $H$ defined in \S\ref{ss:group_H}:
	\begin{verbatim}
		G := SymmetricGroup(6);
		H := sub<G | [(1, 2), (1, 3)(2, 4), (1, 5)(2, 6)]>;
		Order(H);
		IsNormal(G, H);
	\end{verbatim}
\end{remark}

\begin{remark}\label{magma:branch_locus}
Here is a \Magma script to compute the branch locus~$B$ in \S\ref{ss:branch_locus}:
	\begin{verbatim}
		R <e1, e2, e3, e4, e5> := PolynomialRing(Rationals(), 5);
		A <x, a0, a1, a2, a3, a4> := PolynomialRing(R, 6);
		f := x ^ 5 - e1 * x ^ 4 - e2 * x ^ 3 - e3 * x ^ 2 - e4 * x - e5;
		p := a0 * x ^ 3 + a1 * x ^ 2 + a2 * x + a3;
		r :=  a4 ^2 * f - p ^ 2;
		b := Discriminant(r, x) div a4^6;
		Degree(b);
	\end{verbatim}
\end{remark}

\begin{remark}\label{magma:birational}
Here is the \Magma script used in \S\ref{ss:saturation}:
\begin{verbatim}
	A<a2,x1,x2,w1,w2,z3> := PolynomialRing(Rationals(), 6, "elim", 1);
	x3 := - x1 - x2;
	v1 := w1 + z3;
	v2 := w2 + z3;
	y1 := x1 * v1;
	y2 := x2 * v2;
	y3 := x3 * z3;
	D := (x1-x2) * (x1 - x3) * (x2 - x3);
	I := ideal<A | x1 * w1 + x2 * w2,
	     D * a2 - (x2 - x3) * y1 - (x3 - x1) * y2	- (x1 - x2) * y3>;
	J := ideal<A | x1, x2>;
	C := Saturation(I, J) + J;
	GroebnerBasis(C);
\end{verbatim}
\end{remark}

\bibliographystyle{plainurl}
\bibliography{Biblio}

\end{document}